\documentclass[reqno]{amsart}
\usepackage[T1]{fontenc}
\usepackage{amssymb}
\usepackage[left=1in,right=1in,top=1in,bottom=1in]{geometry}
\usepackage[ddmmyyyy]{datetime}
\usepackage{enumitem}
\usepackage{physics}
\usepackage[normalem]{ulem}
\useunder{\uline}{\ul}{}
\usepackage[foot]{amsaddr}
\usepackage{bm}
\usepackage{tikz}
\usepackage{mathrsfs}
\usepackage{mathtools}
\usepackage{hyperref}
\usepackage{scalerel,stackengine}
\stackMath
\usetikzlibrary{arrows.meta}
\usepackage{subcaption}
\usepackage[style=numeric]{biblatex}
\addbibresource{main.bib}
\setlength{\bibitemsep}{\baselineskip}

\setcounter{tocdepth}{1}

\newenvironment{mycases}
   {\begin{dcases*}}
   {\end{dcases*}}
\theoremstyle{plain}
\newtheorem{theorem}{Theorem}[section]
\newtheorem{lemma}[theorem]{Lemma}
\newtheorem{definition}[theorem]{Definition}
\newtheorem{proposition}[theorem]{Proposition}
\newtheorem{corollary}[theorem]{Corollary}
\newtheorem*{corollary*}{Corollary}
\theoremstyle{remark}
\newtheorem{remark}[theorem]{Remark}
\parskip 1ex
\counterwithout{figure}{section}
\counterwithout{equation}{section}

\newcommand{\br}[1]{\left(#1\right)}
\newcommand{\tpd}{\frac{1}{(2\pi)^{d}}}
\newcommand\pig[1]{\scalerel*[2.35pt]{\big#1}{%
  \ensurestackMath{\addstackgap[1.2pt]{\big#1}}}}
\newcommand\pigl[1]{\mathopen{\pig{#1}}}
\newcommand\pigr[1]{\mathclose{\pig{#1}}}
\newcommand{\ak}{\abs{k}}
\newcommand{\ap}{\abs{p}}
\newcommand{\aal}{\abs{\alpha}}
\newcommand{\sbr}[1]{\left[#1\right]}
\newcommand{\half}{\frac{1}{2}}
\newcommand{\p}{\partial}
\newcommand{\jb}[1]{\langle#1\rangle}
\newcommand{\set}[1]{\left\{#1\right\}}
\newcommand{\mytextfrac}[2]{\textstyle\frac{#1}{#2}}
\newcommand{\qaq}{\quad\text{and}\quad}
\newcommand{\para}{\parallel}
\newcommand{\ep}{\varepsilon}
\newcommand{\R}{\mathbb{R}}
\newcommand{\C}{\mathbb{C}}

\newcommand{\intRd}{\int_{\R^d}}
\newcommand{\intRdk}{\int_{\R^d_k}}
\newcommand{\intRdp}{\int_{\R^d_p}}
\newcommand{\lam}{\lambda}
\newcommand{\ceils}[1]{\lceil#1\rceil}
\newcommand{\Qin}{Q_{\mathrm{in}}}
\newcommand\reallywidehat[1]{%
\savestack{\tmpbox}{\stretchto{%
  \scaleto{%
    \scalerel*[\widthof{\ensuremath{#1}}]{\kern.pt\mathchar"0362\kern.1pt}%
    {\rule{0ex}{\textheight}}
  }{\textheight}%
}{2.4ex}}%
\stackon[-6.9pt]{#1}{\tmpbox}%
}

\numberwithin{equation}{section}

\title{Phase mixing for the Hartree equation and Landau damping in the semiclassical limit}
\author{Marnie Smith}
\address{Department of Pure Mathematics and Mathematical Statistics, University of Cambridge}
\email{ms2724@cam.ac.uk}
\begin{document}
\begin{abstract}The asymptotic behaviour of the Hartree equation is studied near translation-invariant steady states. For short-range interaction kernels satisfying a uniform Penrose stability condition, including the screened Coulomb interaction, phase-mixing estimates in finite regularity are established. These demonstrate density decay and scattering of solutions in weighted quantum Sobolev spaces, providing a quantum analogue of Landau damping in classical plasma physics. The results hold uniformly in the semiclassical limit, thereby bridging the quantum and classical regimes.
\end{abstract}
\date{\today}
\maketitle

\tableofcontents
\section{Introduction}

\subsection{The Hartree equation}

The \textit{Hartree equation}, a mean-field model describing the dynamics of infinitely many interacting fermions, is expressed as:
\begin{align}\label{H}
    \begin{mycases}
        i \hbar\p_t\gamma = \sbr{-\frac{\hbar^2}{2}\Delta + w*\rho_{\gamma}, \gamma}, \\
        \gamma(0) = \gamma_{\mathrm{in}},
    \end{mycases}
\end{align}
where $\gamma(t)$ is a positive, self-adjoint operator on $L^2(\R^d; \C)$. The function $\rho_{\gamma}(t,x) = \gamma(t,x,x)$ represents the density, with $\gamma(t,x,y)$ denoting the integral kernel of $\gamma(t)$. The nonlinear convolution term $w*\rho_\gamma$ describes the mean-field interaction between the particles, where $w\colon \R^d \to \R$ is a specified two-body interaction kernel. The parameter \(\hbar>0\) represents the Planck constant, and the semiclassical limit involves taking \(\hbar \to 0\).

For any function \( g \in L^\infty(\R^d; \R_+) \), define the translation-invariant operator
\[
    \gamma_g := \hbar^d g(-i\hbar\nabla)
\]
on \( L^2(\R^d; \C) \). The integral kernel of \(\gamma_g\) is
\[
    \gamma_g(x,y) = \tpd \widehat g\br{\frac{y-x}{\hbar}},
\]
and its constant density is given by
\[
    \rho_{\gamma_g} = \tpd \widehat g(0) = \tpd \int_k g(k) \,dk.
\]
These operators represent a class of steady-state solutions to the Hartree equation \eqref{H}, as they satisfy the commutation relations \([-\Delta, \gamma_g] = 0\) and \([w*\rho_{\gamma_g}, \gamma_g] = 0\), where \(w*\rho_{\gamma_g}\) is constant. The trace of a solution to \eqref{H} determines the total number of particles. For the steady states \(\gamma_g\), this number is infinite due to the constant positive density: \(\Tr\gamma_g = \int \rho_{\gamma_g} \,dx = +\infty.\)

The interaction kernel and steady state are assumed to satisfy a linear stability criterion, known in kinetic theory as the \textit{Penrose condition}. In the present setting, the condition must hold uniformly for all $\hbar \in (0,1]$ in order to remain valid in the semiclassical limit $\hbar \to 0$. This motivates the following formulation:

\begin{definition}[Uniform Penrose condition]
An interaction kernel \(w\colon \R^d \to \R\) and steady state \(\gamma_g = \hbar^d g(-i\hbar\nabla)\) satisfy the uniform Penrose condition if there exists a constant \(\kappa > 0\) such that the inequality
\begin{align}\label{Penrose}
    \inf_{\substack{\Re\lambda \geq 0 \\ k \in \R^d}} \abs{1 + \widetilde{\mathscr{L}}(\lambda, k)} \geq \kappa
\end{align}
holds uniformly for all \(\hbar \in (0,1]\), where
\[
    \widetilde{\mathscr{L}}(\lambda, k) = \tpd \frac{2}{\hbar} \widehat{w}(k) \int_0^\infty e^{-\lam t}\sin\br{\frac{1}{2}\hbar t \ak^2} \widehat{g}\br{kt}\,dt.
\]

\end{definition}
Sufficient conditions on $w$ and $g$ ensuring the uniform Penrose condition~\eqref{Penrose} are established in Section~\ref{Penroseconditionsection}. These include the case of a radial, positive steady state and a repulsive interaction, corresponding to the defocusing regime. Specifically, the condition~\eqref{Penrose} holds if \( w \in L^1(\R^d) \) satisfies \( \widehat{w}(k) \geq 0 \) and \( g \in H^{3/2+\delta_0}_{1+\lceil d/2 \rceil}(\R^d) \) is radial and positive-valued, for some \( 0 < \delta_0 \ll 1 \).

Only short-range interaction kernels \( w \in L^1(\R^d) \) are considered, excluding the Coulomb kernel. For the nonlinear statement, an additional decay condition is required:
\[
    \abs{\widehat{w}(k)} \lesssim \frac{1}{\jb{k}^{M-1/2}},
\]
where \( M := \lceil (d+1)/2 \rceil \) and \( \jb{k} := (1 + \abs{k}^2)^{1/2} \). A physically relevant example satisfying these assumptions in dimension \( d = 3 \) is the \textit{screened Coulomb kernel} (or Yukawa potential), given by
\[
    w(x) = \frac{e^{-\alpha\abs{x}}}{\abs{x}}, \qquad \widehat{w}(k) = \frac{4\pi}{\abs{k}^2 + \alpha^2},
\]
for \( \alpha > 0 \). The screening length \( \alpha^{-1} \) controls the range of the interaction: as \( \alpha \to 0 \), the kernel approaches the unscreened Coulomb kernel, which does not satisfy the required assumptions.

Define the perturbation \(Q := \gamma - \gamma_g\), where \(\gamma\) is a solution of \eqref{H} and $\gamma_g=\hbar^dg(-i\hbar\nabla)$ is a translation-invariant steady state. The evolution of \(Q\) is governed by the nonlinear Hartree equation
\begin{align}\label{NH}
    \begin{mycases}
        i\hbar\p_t Q = \sbr{-\frac{\hbar^2}{2}\Delta + w*\rho_{Q}, Q + \gamma_g}, \\
        Q(0) = Q_{\mathrm{in}}.
    \end{mycases}
\end{align}
The main result establishes phase-mixing estimates for \(Q\), describing the long-time behaviour of solutions near equilibrium: Theorem~\ref{maintheorem} demonstrates mode-by-mode decay of the density \(\rho_Q\) and the scattering behaviour of \(Q\), with bounds that remain uniform in the semiclassical limit $\hbar\to0$.

\subsection{Existing results}
The well-posedness of the Hartree equation was established by Bove, Da Prato and Fano \cite{BoveDaPratoFano1974,BoveDaPratoFano1976}, and independently by Chadam \cite{Chadam1976}, under the assumption that the initial data is a trace-class operator. This framework does not include solutions near translation-invariant steady states, which correspond to systems with infinitely many particles and fall outside the trace-class setting. Lewin and Sabin~\cite{LewinSabinI2015} later extended the theory to cover such solutions, proving well-posedness via conservation of the relative free energy.

Lewin and Sabin~\cite{LewinSabinII2014} also addressed large-time stability of stationary states, proving weak convergence to equilibrium in two dimensions. These results were extended to higher dimensions by Chen, Hong and Pavlović \cite{ChenHongPavlovic2018}, and to the nonlinear Schrödinger system by the same authors \cite{ChenHongPavlovic2017} and by Hadama \cite{hadama2024asymptoticstabilitywideclass}. Related results were obtained by Collot and de Suzzoni \cite{CollotdeSuzzoni2020,CollotdeSuzzoni2022}, who studied an analogous formulation of the Hartree equation for random fields and proved convergence under weaker assumptions on the interaction kernel and at the critical regularity for initial data. A recent preprint by Borie, Hadama and Sabin \cite{BorieHadamaSabin2025} further establishes asymptotic stability for all short-range kernels within this critical regularity framework.

Under stronger regularity assumptions on the initial data, sharper results have been obtained, with decay and scattering statements that are pointwise in time. Nguyen and You addressed the case of the Coulomb kernel, which differs significantly as the Penrose condition fails for all steady states. Their first results concerned the linearised dynamics \cite{NguyenYou2023}, followed by decay estimates and modified scattering for the nonlinear equation near vacuum \cite{NguyenYou2024}.

The classical analogue is \textit{Landau damping}, a well-studied phenomenon in plasma physics where small perturbations in plasma density decay over time due to collective particle motion. This behaviour is described by the Vlasov equation
\begin{align*}
\p_tf+v\cdot\nabla_xf+\nabla_x(w*\rho_f)\cdot\nabla_vf=0,\vphantom{2_B}
\end{align*}where $f=f(t,x,v)$ is a distribution function, $w$ is an interaction kernel and $\rho_f=\int f\,dv$ is the spatial density. In his foundational work, Landau demonstrated that solutions of the linearised Vlasov equation around homogeneous steady states, under specific stability conditions, exhibit this time decay \cite{Landaupaper1946}. The first proof of Landau damping for the nonlinear Vlasov equation was provided by Mouhot and Villani in analytic regularity \cite{mou-vil-2011}, and has remained an active area of research since, e.g.\ \cite{bed-mas-mou-2016,BedrossianMasmoudiMouhot2018,GrenierNguyenRodnianski2021,BedrossianMasmoudiMouhot2022,IonescuPausaderWangWidmayer2024Poisson}. This paper establishes the quantum analogue of a Landau damping result by Bedrossian, Masmoudi and Mouhot~\cite{BedrossianMasmoudiMouhot2018}, proved in finite regularity on the whole space for the Vlasov equation with the screened Coulomb kernel.

The Planck constant $\hbar\approx 6.626\times 10^{-34}\text{ J\hspace{0.08cm}s}$ is a fundamental physical constant that governs the scale at which quantum effects become significant. Its small magnitude means that, in macroscopic systems, classical mechanics provides an accurate description. However, in regimes with sufficiently small spatial or energetic scales, quantum effects become dominant. Mathematically, the limit $\hbar\to0$ formalises the transition from quantum to classical dynamics: quantum behaviour appears as a perturbation of classical behaviour, and in the limit, quantum corrections vanish and classical dynamics emerge. For mean-field models, this transition connects the Hartree equation, describing the evolution of a quantum many-body system, to the Vlasov equation, the classical analogue that governs the evolution of a macroscopic particle density under mean-field interactions:\begin{align*}
\vphantom{\begin{pmatrix}
    1\\2\\3
\end{pmatrix}}\fbox{\text{\textbf{Hartree}}}\quad\xrightarrow{\hspace{0.6cm}\text{\footnotesize\text{$\hbar\to0$}}\hspace{0.6cm}}\quad\fbox{\text{\textbf{Vlasov}}}
\end{align*}
This correspondence is supported by several results. Among these, Lewin and Sabin \cite{LewinSabin2020} showed that, for the nonlinear Hartree equation near a translation-invariant steady state, solutions converge weakly to solutions of the Vlasov equation in the semiclassical limit. Chong, Lafleche and Saffirio \cite{ChongLaflecheSaffirio2023} further established strong $L^2$ convergence for trace-class solutions.

In a recent preprint, You \cite{You2024} independently established phase-mixing estimates for the nonlinear Hartree equation with short-range kernels. Although the conclusions, such as density decay and scattering, resemble those of the present work, the results are of a different nature: You works at fixed $\hbar$, whereas the present analysis establishes uniform control in the semiclassical limit. His approach leverages the finite-difference structure of the Hartree equation, which features a commutator in place of a derivative. After suitable transforms, the nonlinearity takes the approximate form
\begin{align}\label{finitedifference}
\frac{f^\hbar(v+\frac{\hbar}{2}\nabla_x)-f^\hbar(v-\frac{\hbar}{2}\nabla_x)}{\hbar}(w*\rho_{f^\hbar}),\vphantom{\int^{A^A}_{B_B}}
\end{align}
which, as $\hbar\to0$, formally converges to the nonlinear term in the Vlasov equation:
\begin{align*}
\nabla_vf\cdot\nabla_x(w*\rho_f).\vphantom{\int^A_B}
\end{align*}
At fixed $\hbar$, this structure allows the two terms in \eqref{finitedifference} to be controlled separately, effectively gaining one derivative compared to the Vlasov equation. The trade-off is that the estimates do not remain valid as $\hbar\to0$. In contrast, the present work establishes decay and scattering estimates that are uniform in the semiclassical parameter, by extending the methods of Bedrossian, Masmoudi and Mouhot \cite{BedrossianMasmoudiMouhot2018}. The price is a stronger regularity requirement, but uniform control is achieved in a more robust norm, namely a weighted quantum Sobolev space. In short, the gain in regularity at fixed $\hbar$ in \cite{You2024} is exchanged here for uniformity in the semiclassical limit.

\begin{figure}[ht]
\vspace{0cm}
\centering
\begin{tikzpicture}[domain=0:5,xscale=0.25,yscale=0.25]
\draw [->] (1.75,0)--(18.25,0);
\draw [->] (1.75,14)--(18.25,14);
\draw [->] (0,12.5)--(0,1.5);
\draw [->] (20,12.5)--(20,1.5);
\node at (0,0) {$f_{\mathrm{in}}$};
\node at (20,0) {$f_{\infty}$};
\node at (20,14) {$\gamma_{\infty}$};
\node at (0,14) {$\gamma_{\mathrm{in}}$};
\node [below] at (10,0) {\footnotesize{$t\to\infty$}};
\node [above] at (10,14) {\footnotesize{$t\to\infty$}};
\node [left] at (0,7) {\footnotesize{$\hbar\to0$}};
\node [right] at (20,7) {\footnotesize{$\hbar\to0$}};
\end{tikzpicture}
\captionsetup{width=.6\linewidth}
\caption{Semiclassical limit and scattering map.}
\label{map}
\end{figure} 
Figure~\ref{map} illustrates how the uniform-in-$\hbar$ scattering result for the Hartree equation provides a path to the corresponding result for the Vlasov equation via the semiclassical limit. This work shows that, for suitable initial data $\gamma_{\mathrm{in}}$, a solution $\gamma(t)$ to the Hartree equation scatters along the free flow to a limiting operator $\gamma_\infty$ in $\mathcal L^2$ as $t\to\infty$, uniformly in $\hbar$. The analogous result for the Vlasov equation was proved in~\cite{BedrossianMasmoudiMouhot2018}: for suitable initial data 
$f_{\mathrm{in}}$, the solution $f(t)$ scatters to a limiting profile $f_\infty$ in $L^2$ along the free (transport) flow. Assuming that convergence of the initial data implies convergence of the solutions at later times,
\begin{align*}
    \vphantom{\int}\gamma_{\mathrm{in}}\to f_{\mathrm{in}} \text{ as $\hbar\to0$}\quad\implies\quad\gamma(t)\to f(t)\text{ for each $t\geq0$ as $\hbar\to0$,}
\end{align*}then the uniformity in $\hbar$ of the Hartree scattering result implies that the limits can be exchanged via the Moore--Osgood theorem:\begin{align*}
    \lim_{\hbar\to0}\gamma_\infty=\lim_{\hbar\to0}\lim_{t\to\infty}\gamma(t)=\lim_{t\to\infty}\lim_{\hbar\to0}\gamma(t)=\lim_{t\to\infty}f(t)=f_\infty,\vphantom{\int_{B_B}}
\end{align*}showing convergence of the Hartree scattering profile to its classical analogue. To the best of the author's knowledge, no such strong-in-time semiclassical convergence result has been established near translation-invariant steady states. Proving this remains an open problem, one that the present framework is naturally suited to address.

\subsection{Quantum operators and norms} To formulate phase-mixing and scattering estimates in the quantum setting, quantum analogues of classical derivatives and weighted Sobolev norms are introduced. The standard and double Japanese brackets are defined by $\jb{x}:=(1+\abs{x}^2)^{1/2}$ and $\jb{x,y}:=(1+\abs{x}^2+\abs{y}^2)^{1/2}$, respectively.
\begin{definition}[Quantum operators]\label{toolboxdefinition}
    The quantum analogues of spatial and velocity derivatives are
    \begin{align*}
        \bm{\nabla}_xQ&:=[\nabla,Q],\quad\hspace{0.045cm}\, (\bm{\nabla}_xQ)(x,y)=(\nabla_x+\nabla_y)Q(x,y),\\
        \bm{\nabla}_\xi Q&:=[\mytextfrac{x}{i\hbar},Q],\quad \hspace{0.015cm}\,(\bm{\nabla}_\xi Q)(x,y)=(\mytextfrac{x-y}{i\hbar})Q(x,y).
    \end{align*}
    The quantum analogues of spatial and velocity weights are 
    \begin{alignat*}{2}
        \bm{x}Q
        &:=\mytextfrac{1}{2}(xQ+Qx),\quad\quad\,\,
        (\bm{x}Q)(x,y)&&=(\mytextfrac{x+y}{2})Q(x,y),\\
        \bm{\xi}Q&:=-\mytextfrac{i\hbar}{2}(\nabla Q+Q\nabla),\quad 
        (\bm{\xi}Q)(x,y)
        &&=-\mytextfrac{i\hbar}{2}(\nabla_x-\nabla_y)Q(x,y).
    \end{alignat*} 
\end{definition}

The classical weighted Sobolev norm $\norm{\cdot}_{H^\sigma_M}$ is now recalled.

\begin{definition}[Weighted Sobolev norm]
Let $\sigma\geq0$ and let $M\geq0$ be an integer. For a function $g=g(v)$, the weighted Sobolev norm $H^{\sigma}_M$ is defined as
\begin{align*}
    \norm{g}_{H^{\sigma}_M}:=\sum_{\aal\leq M}\norm{\jb{\nabla}^\sigma \br{v^\alpha g}}_{L^2}.
\end{align*}
For a function $f=f(x,v)$, the weighted Sobolev norm $H^{\sigma}_{M}$ is defined as
\begin{align*}
    \norm{f}_{H^{\sigma}_{M}}:=\sum_{\aal\leq M}\norm{\jb{\nabla_x,\nabla_v}^{\sigma}\br{v^\alpha f}}_{L^2},
\end{align*}where the weight applies only to the velocity variable.\end{definition}

The quantum Sobolev norms $\mathcal H^{\sigma}_{M}$ and $\mathcal H^{\sigma}_{N,M}$ are now defined. The former includes weights in $\bm{\xi}$, while the latter includes weights in both $\bm{x}$ and $\bm{\xi}$. The norms depend on $\hbar$ both through a prefactor and through the definitions of the quantum derivative and weight operators $\bm{\nabla}_\xi$ and $\bm{\xi}$.
\begin{definition}[Quantum norms]
For an operator $Q$ on $L^2(\R^d;\C)$ with integral kernel $Q = Q(x,y)$, the following norms are defined for $N, M \in \mathbb{N}$:
\begin{enumerate}[label=(\alph*)]
    \item Quantum Lebesgue norm $\mathcal{L}^2$:
    \begin{align*}
        \norm{Q}_{\mathcal{L}^2} := (2\pi\hbar)^{-d/2} \norm{Q(x,y)}_{L^2_{x,y}}.
    \end{align*}
    
    \item $\bm{\xi}$-weighted quantum Sobolev norm $\mathcal{H}^{\sigma}_M$:
    \begin{align*}
        \norm{Q}_{\mathcal{H}^{\sigma}_{M}} := \sum_{\aal \leq M} \norm{\bm{\xi}^\alpha \jb{\bm{\nabla}_x,\bm{\nabla}_\xi}^\sigma Q}_{\mathcal{L}^2}.
    \end{align*}
    
    \item $\bm{x}$–$\bm{\xi}$-weighted quantum Sobolev norm $\mathcal{H}^{\sigma}_{N,M}$:
    \begin{align*}
            \norm{Q}_{\mathcal{H}^{\sigma}_{N,M}} := \sum_{\aal \leq M,\, \abs{\beta} \leq N} \norm{\bm{x}^\beta \bm{\xi}^\alpha \jb{\bm{\nabla}_x,\bm{\nabla}_\xi}^\sigma Q}_{\mathcal{L}^2}.
    \end{align*}
\end{enumerate}
\end{definition}

Note that the norm $\norm{Q(x,y)}_{L^2_{x,y}}$ is equivalent to both the Hilbert--Schmidt norm and the Schatten $\mathfrak{S}^2$ norm.

\subsection{Main result}The main result establishes quantitative phase-mixing and scattering estimates for the nonlinear Hartree equation near translation-invariant steady states, with bounds that remain uniform in the semiclassical limit. This provides a robust quantum analogue of nonlinear Landau damping in the Vlasov equation.
\begin{theorem}[Nonlinear phase mixing]\label{maintheorem}
Fix dimension $d\geq3$ and $M:=\ceils{(d+1)/2}$. Choose constants $N_0>\sigma_1>\sigma_0$ satisfying \[\sigma_1\geq d+7,\quad N_0-\sigma_1>3d/2+5/2 \qaq \sigma_1-\sigma_0>d+1.\] Let $g\in H^{N_0+7/2+\delta_0}_{2+\ceils{d/2}}(\R^d;\R_+)$ for some $0<\delta_0\ll 1$, and let $w\in L^1(\R^d)$ satisfy \begin{align}
    \abs{\widehat w(k)}\lesssim\frac{1}{\jb{k}^{M-1/2}}\label{wassumption}
\end{align}for all $k\in\R^d$. Assume $g$ and $w$ satisfy the uniform Penrose condition \eqref{Penrose} with constant $\kappa$. Then there exist constants $C>0$ and $\ep_0>0$, depending only on $N_0,\sigma_1,\sigma_0,w,g,\kappa,d,\delta_0$, and independent of $\hbar\in(0,1]$, such that the following holds.

For initial data $\Qin\in \mathcal H^{N_0}_{M,M}$ with \[\norm{\Qin}_{\mathcal H^{N_0}_{M,M}}\leq \ep\leq\ep_0,\]the solution $Q(t)$ to the nonlinear Hartree equation \eqref{NH} satisfies:
\begin{itemize}
    \item Density decay: for all $k\in\R^d$ and $t\geq 0$,\begin{align}
        \label{T1}\abs{\widehat{\rho_Q}(t,k)}&\leq \frac{C\ep}{\jb{k,  kt}^{\sigma_1}}.
    \end{align}
    \item Scattering: there exists an operator $Q_\infty\in\mathcal H^{\sigma_0}_M(\R^d;\C)$ such that, for all $t\geq 0$, \begin{align}
        \label{T2}
        \norm{e^{-i\frac{\hbar}{2}t\Delta}Q(t)e^{i\frac{\hbar}{2}t\Delta}-Q_\infty}_{\mathcal H^{\sigma_0}_{M}}&\leq \frac{C\ep}{\jb{t}^{d/2}}.
    \end{align}
\end{itemize}
\end{theorem}

The decay estimate for the density in Fourier space established in Theorem~\ref{maintheorem} implies physical-space decay in $L^p$ norms, as stated in the following corollary.
\begin{corollary}[Physical-space decay]
    Let $d\geq 3$, $p\in[2,\infty]$ and let $N_0>\sigma_1>0$ with
    \begin{align*}
        \sigma_1\geq d+7\qaq N_0-\sigma_1>3d/2+5/2.
    \end{align*}
    Then for any $n<\sigma_1-d(1-1/p)$, there exist constants $C>0$ and $\ep_0>0$, depending only on $d,p,n,N_0,\sigma_1$, and the parameters in Theorem~\ref{maintheorem}, and independent of $\hbar\in(0,1]$, such that the following holds.

    For initial data $\Qin\in \mathcal H^{N_0}_{M,M}$ with \[\norm{\Qin}_{\mathcal H^{N_0}_{M,M}}\leq \ep\leq\ep_0,\]
    the solution $Q(t)$ to the nonlinear Hartree equation satisfies
    \begin{align}\label{physicalspacedecay}
        \norm{\jb{\nabla_x,t\nabla_x}^n\rho_Q(t,\cdot)}_{L^p(\R^d)}\leq \frac{C\ep}{\jb{  t}^{d(1-\frac{1}{p})}}.
    \end{align}
\end{corollary}
\begin{proof}
By the Hausdorff--Young inequality,
\begin{align*}
    \norm{\jb{\nabla_x,t\nabla_x}^n\rho_Q(t,\cdot)}_{L^p(\R^d)} 
    \lesssim_p \br{\int_{\R^d} \jb{k,kt}^{p'n} \abs{\widehat{\rho_Q}(t,k)}^{p'} \,dk}^{1/p'}.
\end{align*}
Using the decay estimate \eqref{T1} and requiring convergence of the integral yields the result.
\end{proof}

\begin{remark}
Theorem~\ref{maintheorem} establishes two conclusions for the nonlinear Hartree equation: a pointwise-in-time decay estimate for the density in Fourier space \eqref{T1}, which implies the physical-space decay \eqref{physicalspacedecay}; and scattering along the free Hartree flow with decay rate $\jb{t}^{-d/2}$ in a weighted quantum Sobolev space \eqref{T2}. Both the physical-space decay and the scattering rate are optimal for general initial data in the stated regularity class.
\end{remark}

\begin{remark}
In dimension $d=3$, the assumptions on $w$ include the screened Coulomb kernel, extending the nonlinear Landau damping theorem of Bedrossian, Masmoudi and Mouhot \cite{BedrossianMasmoudiMouhot2018} for the Vlasov equation to the quantum setting with uniform-in-$\hbar$ control. In contrast to previous works, the present theorem requires stronger assumptions on the initial data and interaction kernel, but yields convergence in a stronger topology and retains uniformity in $\hbar$.
\end{remark}

\begin{remark}
Aside from the result of Lewin and Sabin~\cite{LewinSabinII2014}, which established weak convergence without a rate, this appears to be the only known result proving scattering for the Hartree equation in a formulation that remains meaningful as \( \hbar \to 0 \). The uniform-in-\( \hbar \) control obtained in Theorem~\ref{maintheorem} provides a framework for recovering classical scattering data. If, under suitable assumptions on the initial data, the Hartree solution converges strongly in \( L^2 \) (in the sense of Wigner transforms) to the corresponding Vlasov solution at each fixed time, then Theorem~\ref{maintheorem} would imply convergence of the scattering profiles: \( \gamma_\infty \to f_\infty \) in \( L^2 \). Proving such strong semiclassical convergence near translation-invariant steady states remains an interesting open problem.
\end{remark}

\subsection{Notations and tools} 
This section collects key notation and tools related to the Fourier and Wigner transforms, which play a central role in the analysis. 

Throughout, integrals such as $\int_x\,dx$ denote integration over $\R^d$ in the $x$ variable. The following conventions are adopted for the Fourier transform of functions $g=g(v)$ and $f=f(x,v)$:
\begin{align*}
    \widehat g(p):=\int_v e^{-ip\cdot v}g(v)\,dv,\qquad \widehat f(k,p):=\int_x\int_ve^{-ik\cdot x}e^{-ip\cdot v}f(x,v)\,dvdx.
\end{align*}
On the Fourier side, the operators defined in Definition \ref{toolboxdefinition} obey the relations: \begin{alignat*}{3}
    \widehat{\bm{\nabla}_xQ}(k,p) &= i(k+p)\widehat Q(k,p), 
    &\qquad\quad \widehat{\bm{\nabla}_xQ}(\mytextfrac{k}{2}+p,\mytextfrac{k}{2}-p) &= ik\widehat Q(\mytextfrac{k}{2}+p,\mytextfrac{k}{2}-p),\\
    \widehat{\bm{\nabla}_\xi Q}(k,p) &= \mytextfrac{1}{\hbar}(\nabla_k-\nabla_p)\widehat Q(k,p), 
    &\qquad\quad \widehat{\bm{\nabla}_\xi Q}(\mytextfrac{k}{2}+p,\mytextfrac{k}{2}-p) &= \mytextfrac{1}{\hbar}\nabla_p \sbr{\widehat Q(\mytextfrac{k}{2}+p,\mytextfrac{k}{2}-p)},\\
    \widehat{\bm{x} Q}(k,p) &= \mytextfrac{i}{2}(\nabla_k+\nabla_p)\widehat Q(k,p), 
    &\qquad\quad \widehat{\bm{x} Q}(\mytextfrac{k}{2}+p,\mytextfrac{k}{2}-p) &= i\nabla_k \sbr{\widehat Q(\mytextfrac{k}{2}+p,\mytextfrac{k}{2}-p)},\\
    \widehat{\bm{\xi}Q}(k,p) &= \mytextfrac{\hbar}{2}(k-p)\widehat Q(k,p), 
    &\qquad\quad \widehat{\bm{\xi}Q}(\mytextfrac{k}{2}+p,\mytextfrac{k}{2}-p) &= \hbar p\widehat Q(\mytextfrac{k}{2}+p,\mytextfrac{k}{2}-p).
\end{alignat*}

For an operator $Q$, the density $\rho_Q(x):=Q(x,x)$ admits the following Fourier representation.
\begin{lemma}\label{densityofoperator}
 Let $Q$ be an operator on $L^2(\R^d;\C)$ identified with its integral kernel $Q=Q(x,y)$. Then its density $\rho_Q(x):=Q(x,x)$ has Fourier representation\begin{align*}
        \widehat{\rho_Q}(t,k)=\tpd\int_p\widehat Q(k-p,p)\,dp=\tpd\int_p\widehat Q\br{\mytextfrac{k}{2}+p,\mytextfrac{k}{2}-p}\,dp.
    \end{align*}
\end{lemma}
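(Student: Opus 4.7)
The identity is essentially two-variable Fourier inversion followed by a change of variables, so the plan is short and direct. First, I would write the kernel using the inverse Fourier transform on $\R^{2d}$ with the paper's convention, namely
\[Q(x,y)=\frac{1}{(2\pi)^{2d}}\int_\xi\int_\eta e^{i\xi\cdot x+i\eta\cdot y}\widehat Q(\xi,\eta)\,d\xi\,d\eta,\]
and then specialise to the diagonal $y=x$ to obtain $\rho_Q(x)=\frac{1}{(2\pi)^{2d}}\int\int e^{i(\xi+\eta)\cdot x}\widehat Q(\xi,\eta)\,d\xi\,d\eta$. The key observation is that on the diagonal the phase depends only on the sum $\xi+\eta$, so I introduce the linear change of variables $(\xi,\eta)\mapsto(k-p,p)$, which has unit Jacobian and sweeps all of $\R^d\times\R^d$ as $(k,p)$ does. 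This turns the display into
\[\rho_Q(x)=\frac{1}{(2\pi)^{2d}}\int_k e^{ik\cdot x}\br{\int_p\widehat Q(k-p,p)\,dp}\,dk.\]
Reading off the Fourier coefficient of $e^{ik\cdot x}$ in $\rho_Q$ yields the first claimed formula $\widehat{\rho_Q}(k)=\tpd\int_p\widehat Q(k-p,p)\,dp$.

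For the second, symmetric form I would make the further unit-Jacobian substitution $p\mapsto\frac{k}{2}-p$ inside the $p$-integral, which sends the argument $(k-p,p)$ to $(\frac{k}{2}+p,\frac{k}{2}-p)$ and produces the second identity. The time parameter plays no role since the identities are pointwise in $t$, so I would simply carry $t$ through the computation without comment.

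The only technical point is the applicability of Fubini and Fourier inversion. I would assume that $Q$ and $\widehat Q$ are sufficiently regular and integrable for the formal manipulations to be justified, noting that in the setting of the paper $Q$ has finite $\mathcal H^{N_0}_M$ norm and hence its kernel is Schwartz-like along the relevant slices; the identity then extends to the general case by density or by reading it as an equality of tempered distributions. I do not anticipate any genuine obstacle, and so the proof is essentially a bookkeeping exercise in Fourier conventions.
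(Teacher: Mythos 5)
Your proposal is correct and follows essentially the same route as the paper: both write $Q$ on the diagonal via the two-variable inverse Fourier transform and then isolate the $k$-mode (the paper via the delta function $\int_x e^{i(k'+p-k)\cdot x}\,dx$, you via the unit-Jacobian change of variables $(\xi,\eta)\mapsto(k-p,p)$, which is the same computation), followed by the substitution $p\mapsto\frac{k}{2}-p$ for the symmetric form. No issues.
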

\begin{proof}Writing $Q(x,x)$ as the inverse Fourier transform of $\widehat Q$ evaluated at $(x,x)$,
    \begin{align*}
    \widehat{\rho_{Q}}(t,k)&=\int_xe^{-ik\cdot x}\sbr{\frac{1}{(2\pi)^{2d}}\int_{k'}\int_p e^{ik'\cdot x}e^{ip\cdot x}\widehat{Q}(t,k',p)\,dpdk'}\,dx\\
    &=\tpd\int_{k'}\int_p\widehat{Q}(t,k',p)\br{\tpd\int_x e^{i(k'+p-k)\cdot x}\,dx}dpdk'\\
    &=\tpd\int_{k'}\int_p\widehat{Q}(t,k',p)\delta(k'+p-k)dpdk'\\
    &=\tpd\int_p\widehat{Q}(t,k-p,p)dp.
\end{align*}The second representation in the lemma follows from the change of variable $p\mapsto k/2-p$.
\end{proof}

The Wigner transform facilitates the representation of the density matrix in phase space, revealing structural parallels with classical kinetic equations.
\begin{definition}[Wigner transform]\label{Wignerdefinition}
    For an operator $Q$ on $L^2(\R^d;\C)$ with integral kernel $Q(x,y)$, define the Wigner transform of $Q$, $W[Q]\colon\R^d\times\R^d\to\C$, by \begin{align*}
        W[Q](x,\xi)= \tpd\frac{1}{\hbar^d}\intRd e^{-i\xi\cdot y/\hbar}Q\br{x+\mytextfrac{y}{2},x-\mytextfrac{y}{2}}\,dy.
    \end{align*}
\end{definition}
The Fourier transform of the Wigner transform of $Q$ satisfies
\begin{align*}
        \widehat{W[Q]}(k,\eta)=\int_xe^{-ik\cdot x}Q\br{x-\mytextfrac{\hbar \eta}{2},x+\mytextfrac{\hbar \eta}{2}}\,dx=\tpd\int_p e^{-i\hbar\eta\cdot p }\widehat{Q}\br{\mytextfrac{k}{2}+p,\mytextfrac{k}{2}-p}\,dp,
    \end{align*}and can be controlled by the $\mathcal H_{M,M}^{0}$ norm of $Q$ via the Sobolev embedding.
\begin{lemma}\label{FTofWignertransform}
Suppose $Q\in\mathcal H^0_{M,M}(\R^d;\C)$. Then the Wigner transform of $Q$ satisfies, for all $k,\eta\in\R^d$,
    \begin{align*}
        \abs{\widehat {W[Q]}(k,\eta)}\lesssim_d\norm{Q}_{\mathcal H^{0}_{M,M}}.
    \end{align*}
\end{lemma}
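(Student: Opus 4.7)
The plan is to bound $|\widehat{W[P]}(k,\ell)|$ by the $L^1_{x,\xi}$ norm of $W[P]$ and then use a weighted Cauchy--Schwarz together with the covariance of the Wigner transform to convert to the operator norm $\|P\|_{\mathcal H^0_M}$. The key algebraic ingredients are the two foundational identities $W[\bm x P]=xW[P]$, $W[\bm \xi P]=\xi W[P]$ and the Plancherel-type isometry $\|W[Q]\|_{L^2}=\|Q\|_{\mathcal L^2}$, both of which are $\hbar$-uniform by design.

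First I would invoke the standard $L^1\to L^\infty$ bound for the Fourier transform to get $|\widehat{W[P]}(k,\ell)|\leq \|W[P]\|_{L^1_{x,\xi}}$, with an absolute constant. Inserting the weight $\langle x\rangle^{-M}\langle\xi\rangle^{-M}$ and applying Cauchy--Schwarz yields
\begin{align*}
\|W[P]\|_{L^1_{x,\xi}} \leq \|\langle x\rangle^{-M}\|_{L^2(\R^d)}\|\langle\xi\rangle^{-M}\|_{L^2(\R^d)}\,\|\langle x\rangle^M\langle\xi\rangle^M W[P]\|_{L^2_{x,\xi}},
\end{align*}
and the two negative-weight factors are finite constants depending only on $d$ because $M=\lceil(d+1)/2\rceil>d/2$ makes each factor square-integrable on $\R^d$. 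Standard equivalence of polynomial weights then gives $\|\langle x\rangle^M\langle\xi\rangle^M W[P]\|_{L^2}\lesssim_{d,M}\sum_{|\alpha|,|\beta|\leq M}\|x^\beta\xi^\alpha W[P]\|_{L^2}$.

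Next I would establish the covariance relations. A direct computation from Definition \ref{Wignerdefinition}, integrating by parts in the defining integral, gives $xW[P]=W[\bm x P]$ and $\xi W[P]=W[\bm \xi P]$; iterating, and using that $x,\xi$ commute as multiplications in the Wigner variables even though $\bm x,\bm\xi$ do not commute on operators, yields $x^\beta\xi^\alpha W[P]=W[\bm x^\beta\bm\xi^\alpha P]$. Second, a change of variables $(u,v)=(x+y/2,x-y/2)$ together with Plancherel in $\xi$ proves the isometry $\|W[Q]\|_{L^2_{x,\xi}}=\|Q\|_{\mathcal L^2}$; the prefactors of $(2\pi\hbar)^{-d}$ in the definitions of $W$ and of $\|\cdot\|_{\mathcal L^2}$ are arranged precisely so this identity is $\hbar$-independent. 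Combining, $\|x^\beta\xi^\alpha W[P]\|_{L^2}=\|\bm x^\beta\bm\xi^\alpha P\|_{\mathcal L^2}$, and summing over $|\alpha|,|\beta|\leq M$ produces $\|P\|_{\mathcal H^0_M}$.

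There is no substantive obstacle in this argument; it is one weighted Cauchy--Schwarz step glued to two structural identities for the Wigner transform. The only real care needed is to check that no step incurs a negative power of $\hbar$, which is automatic from the way $\|\cdot\|_{\mathcal L^2}$ is normalised to absorb the Wigner prefactor. A secondary bookkeeping point is that although the Wigner variables $x,\xi$ commute in the equivalence $\langle x\rangle^M\langle\xi\rangle^M\sim\sum|x^\beta\xi^\alpha|$, the ordering on the operator side does matter; but since each ordering of $\bm x^\beta\bm\xi^\alpha P$ is covered by the definition of $\|P\|_{\mathcal H^0_M}$ (summed over all $|\alpha|,|\beta|\leq M$), the argument still closes cleanly.
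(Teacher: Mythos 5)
Your argument is correct, and it reaches the same weighted-$L^2$ control of $\sup_{k,\eta}|\widehat{W[P]}|$ as the paper, but by a somewhat different mechanism. The paper works with the mixed representation $\widehat{W[P]}(k,\eta)=\int_x e^{-ik\cdot x}P(x-\frac{\hbar}{2}\eta,x+\frac{\hbar}{2}\eta)\,dx$, applies Cauchy--Schwarz in $x$ with the weight $\jb{x}^{d/2+\widetilde\delta}$, and then uses the Sobolev embedding $H^{d/2+\widetilde\delta}_\eta\hookrightarrow L^\infty_\eta$ in the remaining variable, identifying $\jb{\nabla_\eta}$ with $\jb{\bm\xi}$ by hand. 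You instead stay entirely in the Wigner phase-space picture: the $L^1\to L^\infty$ bound for the double Fourier transform plus a single weighted Cauchy--Schwarz with $\jb{x}^{-M}\jb{\xi}^{-M}\in L^2(\R^{2d})$ (valid since $M=\ceils{(d+1)/2}>d/2$), followed by the covariance identities $x^\beta\xi^\alpha W[P]=W[\bm x^\beta\bm\xi^\alpha P]$ and the $\hbar$-normalised isometry of Lemma \ref{L2normofWigner}. Your route avoids Sobolev embedding altogether and reuses Lemmas \ref{operatorsWigner} and \ref{L2normofWigner} as black boxes, which is arguably cleaner; the paper's version uses the slightly weaker weight $d/2+\widetilde\delta$ rather than $M$, though both are covered by $\mathcal H^0_M$. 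One small correction: the weight operations $\bm x$ and $\bm\xi$ \emph{do} commute (on kernels, $(\nabla_x-\nabla_y)$ annihilates $\frac{x+y}{2}$), so your worry about orderings is moot --- and it is just as well, since the definition of $\norm{\cdot}_{\mathcal H^0_M}$ fixes a single ordering $\bm x^\beta\bm\xi^\alpha$ rather than "covering every ordering" as you suggest.
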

\begin{proof}
    By Definition \ref{toolboxdefinition}, Cauchy--Schwarz and the Sobolev embedding, for $0<\widetilde\delta\leq 1/2$,
\begin{align*}
    \abs{\widehat{W[Q]}(k,\eta)}&=\abs{\int_x e^{-ik\cdot x}Q\br{x-\mytextfrac{\hbar}{2}\eta,x+\mytextfrac{\hbar}{2}\eta}\,dx}\\
    &\lesssim\br{\int_x\abs{\jb{x}^{{{d/2}+\widetilde\delta}}Q\br{x-\mytextfrac{\hbar}{2}\eta,x+\mytextfrac{\hbar}{2}\eta}}^2\,dx}^{1/2}\\
    &\lesssim\br{\int_{x,y}\abs{\jb{\nabla_y}^{{d/2}+\widetilde\delta}\sbr{\jb{x}^{{{d/2}+\widetilde\delta}}Q\br{x-\mytextfrac{\hbar}{2}y,x+\mytextfrac{\hbar}{2}y}}}^2\,dxdy}^{1/2}\\
    &=\br{\int_{x,y}\abs{\jb{\mytextfrac{\hbar}{2}(\nabla_x-\nabla_y)}^{{d/2}+\widetilde\delta}\jb{x}^{{{d/2}+\widetilde\delta}}Q\br{x-\mytextfrac{\hbar}{2}y,x+\mytextfrac{\hbar}{2}y}}^2\,dxdy}^{1/2}\\
    &=\hbar^{-d/2}\norm{\jb{\bm{\xi}}^{{d/2}+\widetilde\delta}\jb{\bm{x}}^{{{d/2}+\widetilde\delta}}Q(x,y)}_{L^2_{x,y}}\\
    &\lesssim\hbar^{-d/2}\sum_{\aal,\abs{\beta} \leq M}\norm{\bm{\xi}^\alpha\bm{x}^\beta Q(x,y)}_{L^2_{x,y}},
\end{align*}where $M:=\ceils{(d+1)/2}\geq d/2+\widetilde \delta$. The lemma then follows from the definition of the quantum Sobolev norm.
\end{proof}

The operators defined in Definition \ref{toolboxdefinition} interact nicely with the Wigner transform:
\begin{lemma}\label{operatorsWigner}Suppose $Q\in\mathcal L^2(\R^d;\C)$. Then the Wigner transform of $Q$ satisfies the following:\begin{alignat*}{2}
    \nabla_xW[Q](x,\xi)&=W[\bm{\nabla}_x Q](x,\xi),\qquad ik\widehat{W[Q]}(k,\eta)&&=\reallywidehat{W[\bm{\nabla}_x Q]}(k,\eta), \\ 
     \nabla_\xi W[Q](x,\xi)&=W[\bm{\nabla}_\xi Q](x,\xi),\qquad i\eta \widehat{W[Q]}(k,\eta)&&=\reallywidehat{W[\bm{\nabla}_\xi Q]}(k,\eta),  \\
     xW[Q](x,\xi)&=W[\bm{x}Q](x,\xi),\qquad i\nabla_k\widehat{W[Q]}(k,\eta)&&=\reallywidehat{W[\bm{x}Q]}(k,\eta),  \\
         \xi W[Q](x,\xi)&=W[\bm{\xi}Q](x,\xi),\qquad i\nabla_\eta\widehat{W[Q]}(k,\eta)&&=\reallywidehat{W[\bm{\xi}Q]}(k,\eta). 
\end{alignat*}
\end{lemma}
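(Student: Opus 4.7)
The plan is to verify the four spatial-side identities by direct computation from the definition of the Wigner transform; the four Fourier-side identities then follow immediately from the standard rules $\widehat{\nabla_x f}(k)=ik\hat f(k)$ and $\widehat{xf}(k)=i\nabla_k\hat f(k)$ applied to the spatial-side ones, so the substance of the lemma is the first column.

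Writing $K(x,y):=P(x+y/2,x-y/2)$ for the shifted kernel, the two ``weight'' identities are immediate from Definition \ref{toolboxdefinition}: the kernel of $\bm{x}P$ evaluated at $(x+y/2,x-y/2)$ equals $\tfrac{1}{2}\br{(x+y/2)+(x-y/2)}P=xP$, and the kernel of $\bm{\nabla}_\xi P$ evaluated at the same point is $\tfrac{(x+y/2)-(x-y/2)}{i\hbar}P=\tfrac{y}{i\hbar}P$. Inserting each expression into the defining integral of $W[\,\cdot\,]$ and recognising that $x$ pulls out of the $y$-integral, while $\tfrac{y}{i\hbar}e^{-i\xi\cdot y/\hbar}=-\nabla_\xi e^{-i\xi\cdot y/\hbar}$ (after an integration by parts, or equivalently by differentiating under the integral), yields $xW[P]=W[\bm{x}P]$ and $\nabla_\xi W[P]=W[\bm{\nabla}_\xi P]$ respectively.

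For the remaining two, $\nabla_xW[P]=W[\bm{\nabla}_xP]$ follows from the chain rule: differentiating $K(x,y)=P(x+y/2,x-y/2)$ in $x$ gives $(\nabla_1 P+\nabla_2 P)(x+y/2,x-y/2)$, which by Definition \ref{toolboxdefinition} is exactly the kernel of $\bm{\nabla}_xP$ at $(x+y/2,x-y/2)$, so the identity follows by differentiating under the integral sign in the definition of $W[P]$. For $\xi W[P]=W[\bm{\xi}P]$, I would use the identity $\xi_j e^{-i\xi\cdot y/\hbar}=i\hbar\,\p_{y_j}e^{-i\xi\cdot y/\hbar}$ and integrate by parts in $y$; the boundary terms vanish for $P\in\mathcal L^2$ by approximation, and the $y$-derivative falling on $K(x,y)$ produces $\tfrac{1}{2}(\nabla_1-\nabla_2)P$ at the shifted point, which together with the $-\tfrac{i\hbar}{2}$ prefactor is precisely the kernel of $\bm{\xi}P$.

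The Fourier-side identities now follow from the spatial ones by taking the Fourier transform in $(x,\xi)\mapsto(k,\eta)$ and using the standard rules $\widehat{\nabla_x f}(k)=ik\hat f(k)$, $\widehat{\nabla_\xi f}(\eta)=i\eta\hat f(\eta)$, $\widehat{xf}(k)=i\nabla_k\hat f(k)$, $\widehat{\xi f}(\eta)=i\nabla_\eta\hat f(\eta)$, which apply to $W[P]\in L^2(\R^d_x\times\R^d_\xi)$ in the tempered-distribution sense. There is no real obstacle here; the only mildly delicate step is justifying the integration by parts in the $\bm{\xi}P$ identity, which is handled by density of Schwartz kernels in $\mathcal L^2$ followed by passing to the limit, since each identity is bounded between $\mathcal L^2$-type norms on both sides.
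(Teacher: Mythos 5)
Your proof is correct and takes essentially the same route as the paper, whose entire proof is the remark that all eight identities follow directly from Definitions \ref{toolboxdefinition} and \ref{Wignerdefinition} --- precisely the computations you spell out, with the Fourier-side column obtained from the spatial-side one exactly as you say. One small sign slip worth fixing: with the paper's conventions $\tfrac{y}{i\hbar}e^{-i\xi\cdot y/\hbar}=+\nabla_\xi e^{-i\xi\cdot y/\hbar}$ (not $-\nabla_\xi$), and it is this plus sign that yields $\nabla_\xi W[P]=W[\bm{\nabla}_\xi P]$ as stated.
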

\begin{proof} These statements follow directly from the definitions of the Wigner transform (Definition \ref{Wignerdefinition}) and the quantum operators (Definition \ref{toolboxdefinition}). \end{proof}

The $L^2$ norm of $W[Q]$ is related to the $\mathcal L^2$ norm of $Q$ via the following relation:
\begin{lemma}\label{L2normofWigner}
    Suppose $Q\in\mathcal L^2(\R^d;\C)$. Then the Wigner transform of $Q$ satisfies
    \begin{align*}
        \norm{W[Q](x,\xi)}_{L^2_{x,\xi}}=(2\pi\hbar)^{-d/2}\norm{Q(x,y)}_{L^2_{x,y}}=(2\pi)^{-d/2}\norm{Q}_{\mathcal L^2}.
    \end{align*}
\end{lemma}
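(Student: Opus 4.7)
The plan is to unfold $\norm{W[P]}_{L^2_{x,\xi}}^2$ by applying Plancherel's theorem in the $\xi$ variable for each fixed $x$, and then performing a volume-preserving linear change of variables in $(x,y)$. For each fixed $x$, set $R_x(y) := P\br{x+\textfrac{y}{2},\, x-\textfrac{y}{2}}$, so that Definition \ref{Wignerdefinition} can be rewritten as
\[ W[P](x,\xi) = \tpd\frac{1}{\hbar^d}\,\widehat{R_x}\br{\xi/\hbar}, \]
where the Fourier transform acts on the $y$-variable only.

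Next, I would compute, for each fixed $x$,
\[ \int_\xi \abs{W[P](x,\xi)}^2\,d\xi = \tpd\tpd\frac{1}{\hbar^{2d}}\int_\xi \abs{\widehat{R_x}(\xi/\hbar)}^2\,d\xi, \]
and apply the substitution $\xi=\hbar\eta$, which contributes a factor of $\hbar^d$. The standard Plancherel identity $\int_\eta|\widehat{R_x}(\eta)|^2\,d\eta = (2\pi)^d\int_y|R_x(y)|^2\,dy$ then yields
\[ \int_\xi \abs{W[P](x,\xi)}^2\,d\xi = \tpd\frac{1}{\hbar^d}\int_y \abs{P\br{x+\textfrac{y}{2},\,x-\textfrac{y}{2}}}^2\,dy. \]

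Finally, I would integrate in $x$ and apply the linear change of variables $(u,v) = \br{x+\textfrac{y}{2},\,x-\textfrac{y}{2}}$. In each coordinate pair this change has Jacobian determinant of absolute value $1$, hence $dx\,dy = du\,dv$, and the right-hand side becomes $(2\pi\hbar)^{-d}\norm{P(u,v)}_{L^2_{u,v}}^2$. Taking square roots and recalling the definition of the quantum Lebesgue norm $\mathcal L^2$ gives both equalities in the statement. The argument is essentially bookkeeping, and there is no genuine obstacle; the only subtlety is tracking the $\hbar$-scaling correctly when invoking Plancherel after the substitution $\xi=\hbar\eta$, which is precisely what produces the characteristic semiclassical factor $(2\pi\hbar)^{-d/2}$.
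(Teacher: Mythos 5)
Your argument is correct and is exactly the paper's proof, which simply records the identity $W[P](x,\xi)=(2\pi\hbar)^{-d}\mathcal F\sbr{P\br{x+\textfrac{\cdot}{2},x-\textfrac{\cdot}{2}}}(\xi/\hbar)$ and invokes Plancherel; you have merely written out the $\hbar$-rescaling and the unimodular change of variables $(u,v)=(x+\textfrac{y}{2},x-\textfrac{y}{2})$ explicitly. The bookkeeping of constants matches the first equality in the statement, so nothing is missing.
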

\begin{proof}
    The result follows from the observation that \begin{align*}
        W[Q](x,\xi)=(2\pi\hbar)^{-d}\mathcal F\sbr{Q\br{x+\mytextfrac{\cdot}{2},x-\mytextfrac{\cdot}{2}}}\br{\xi/\hbar},
    \end{align*}combined with Plancherel's theorem.
\end{proof}

\subsection{Free phase mixing} Consider the linear equation obtained by removing the interaction term from the Hartree equation, referred to here as the \textit{free Hartree equation}: \begin{align}\label{FH}
    \begin{mycases}
    i\hbar\p_tQ_{\mathrm{FH}}=\sbr{-\frac{\hbar^2}{2}\Delta,Q_{\mathrm{FH}}},\\
    Q_{\mathrm{FH}}(0)=\Qin,
\end{mycases}
\end{align} where ${Q_{\mathrm{FH}}}(t)$ is an operator on $L^2(\R^d;\C)$. The solution is given explicitly by\begin{align}
    {Q_{\mathrm{FH}}}(t)=e^{i\frac{\hbar}{2} t\Delta}\Qin e^{-i  \frac{\hbar}{2} t\Delta},\label{FHsol}
\end{align}with integral kernel and Fourier transform\begin{align*}
    {Q_{\mathrm{FH}}}(t,x,y)=e^{i {\frac{\hbar}{2}} t(\Delta_x-\Delta_y)}\Qin(x,y),\quad\widehat{Q_{\mathrm{FH}}}(t,k,p)=e^{-i {\frac{\hbar}{2}} t(\ak^2-\ap^2)}\widehat\Qin(k,p).
\end{align*}
By Lemma~\ref{densityofoperator}, the density of $Q_{\mathrm{FH}}$ has Fourier transform \begin{align*}
\widehat{\rho_{{Q_{\mathrm{FH}}}}}(t,k)=\tpd\int_pe^{-i \hbar p\cdot kt}\widehat \Qin\br{\mytextfrac{k}{2}+p,\mytextfrac{k}{2}-p}\,dp =\widehat{W[\Qin]}(k,kt).
\end{align*}

The next result gives decay estimates for each Fourier mode of the density under free evolution.
\begin{proposition}[Phase mixing for the free Hartree equation] \label{propfreephasemixing}
Fix $N_0\geq0$. Then for any $t\geq0$ and $k\in\R^d$, the density of the solution $Q_{\mathrm{FH}}$ to \eqref{FH} satisfies\begin{align*}
\abs{\widehat{\rho_{{Q_{\mathrm{FH}}}}}(t,k)}\leq\frac{ \norm{Q_{\mathrm{in}}}_{\mathcal H^{N_0}_{M}}}{\jb{k,kt}^{N_0}}.
\end{align*}
\end{proposition}
\begin{proof}
Using Lemma~\ref{operatorsWigner}, 
\begin{align*}
    \ak^n \widehat{\rho_{{Q_{\mathrm{FH}}}}}(t,k)=\reallywidehat{W\sbr{\abs{\bm{\nabla}_x}^n Q_{\mathrm{in}}}}(k,kt)\qaq \abs{kt}^n \widehat{\rho_{{Q_{\mathrm{FH}}}}}(t,k)=\reallywidehat{W\sbr{\abs{\bm{\nabla}_\xi}^n Q_{\mathrm{in}}}}(k,kt).
\end{align*}
Combining these identities gives the desired bound:\begin{align*}
    \jb{k,kt}^{N_0}\widehat{\rho_{Q_{\mathrm{FH}}}}(t,k)&=\reallywidehat{W\sbr{\jb{\bm{\nabla}_x,\bm{\nabla}_{\xi}}^{N_0}Q_{\mathrm{in}}}}(k,kt),
\end{align*}and the result then follows from Lemma~\ref{FTofWignertransform}.
\end{proof}

The same type of estimate for the nonlinear Hartree equation is established in Theorem~\ref{maintheorem} under stronger regularity assumptions on the initial data. Moreover, the scattering statement \eqref{T2} indicates that the solution $Q(t)$ of \eqref{NH} behaves like a solution of the free Hartree equation as $t\rightarrow\infty$.

\subsection{Fourier and Wigner formulations of the Hartree equation}This subsection rewrites the nonlinear Hartree equation in both Fourier and Wigner form, making the structure of the linear and nonlinear terms explicit.
\subsubsection{Fourier formulation}
The nonlinear Hartree equation \eqref{H} takes the form
\begin{align*}
    i\hbar\p_tQ(t,x,y)=-{\frac{\hbar^2}{2}}\br{\Delta_x-\Delta_y}Q(t,x,y)+\br{V(t,x)-V(t,y)}\br{\gamma_g(x,y)+Q(t,x,y)},
\end{align*}where $V(s,x)=w*\rho_Q(s,x)$.
 Applying Duhamel's formula yields an implicit expression for $Q$:\begin{align*}\begin{split}\
    Q(t,x,y)&=e^{i{\frac{\hbar}{2}} (\Delta_x-\Delta_y)t}Q_{\mathrm{in}}(x,y)\\
    &\hspace{0.5cm}-
    \frac{i}{\hbar}\int_0^te^{i{\frac{\hbar}{2}}(\Delta_x-\Delta_y)(t-s)}\sbr{V(s,x)-V(s,y)}\br{\gamma_g(x,y)+Q(s,x,y)}\,ds.\end{split}
\end{align*}
Taking the Fourier transform in both spatial variables gives\begin{align*}
    \widehat Q(t,k,p)&=e^{-i{\frac{\hbar}{2}}(\ak^2-\ap^2)t}\widehat{Q_{\mathrm{in}}}(k,p)\\&\hspace{0.5cm}-
    \frac{i}{\hbar}\tpd\int_0^te^{-i{\frac{\hbar}{2}}(\ak^2-\ap^2)(t-s)}\sbr{\widehat V(s,k)*_k-\widehat V(s,p)*_p}\br{\widehat{\gamma_g}(k,p)+\widehat{Q}(s,k,p)}\,ds.
\end{align*}Recalling that $\widehat{\gamma_g}(k,p)=(2\pi)^d\hbar^dg(\hbar k)\delta(p=-k)$ and $\widehat V(t,k)=\widehat w(k)\widehat{\rho_Q}(t,k)$, gives\begin{align*}\begin{split}
    \widehat Q(t,k,p)&=e^{-i{\frac{\hbar}{2}}(\ak^2-\ap^2)t}\widehat{Q_{\mathrm{in}}}(k,p)\\
    &-i\hbar^{d-1}\widehat w(k+p)\sbr{g(-\hbar p)-g(\hbar k)}\int_0^te^{-i{\frac{\hbar}{2}}(\ak^2-\ap^2)(t-s)}\widehat{\rho_Q}(s,k+p)\,ds\\
    &-
    \frac{i}{\hbar}\tpd\int_0^t\int_\ell e^{-i{\frac{\hbar}{2}}(\ak^2-\ap^2)(t-s)}\widehat w(\ell )\widehat{\rho_Q}(s,\ell )\br{\widehat{Q}(s,k-\ell ,p)-\widehat{Q}(s,k,p-\ell )}\,d\ell ds.\end{split}
\end{align*}

\subsubsection{Conjugation by the free Hartree flow}
Introduce the operator $P$ by conjugating $Q$ along the free evolution: \begin{align*}
    Q(t,x,y)=e^{i\frac{\hbar}{2}(\Delta_x-\Delta_y)t}P(t,x,y),\quad \widehat Q(t,k,p)=e^{-i\frac{\hbar}{2}(\ak^2-\ap^2)t}\widehat P(t,k,p).
\end{align*} This conjugation reverses $Q$ along the free flow, removing the dominant free dynamics and allowing $P$ to encode the interaction-driven dynamics alone. The operator $P$ thus provides a natural change of variables in which the regularity of the initial data is expected to be largely preserved, in contrast to $Q$, which is subject to the mixing effects of the free evolution. The conjugated perturbation $P$ satisfies the integral equation
\begin{align}\label{Na}
    \widehat P(t,k,p)=\widehat{\Qin}(k,p)+\int_0^t \widehat {\mathrm{L}_P}(s,k,p)\,ds+\int_0^t\widehat{\mathrm{N}_P}(s,k,p)\,ds,
\end{align}
with\begin{align*}
    \widehat {\mathrm{L}_P}(s,k,p)&:=-i\hbar^{d-1}\widehat w(k+p)\sbr{g(-\hbar p)-g(\hbar k)}e^{i{\frac{\hbar}{2}}(\ak^2-\ap^2)s}\widehat{\rho_Q}(s,k+p)
\end{align*}and
\begin{align*}
    \widehat{\mathrm{N}_P}(s,k,p):=-\frac{i}{\hbar}\tpd\int_\ell \widehat w(\ell )\widehat{\rho_Q}(s,\ell )\Big(e^{-i\frac{\hbar}{2}(\abs{k-\ell }^2-\ak^2)s}\widehat{P}(s,k-\ell ,p)-e^{-i\frac{\hbar}{2}(\ap^2-\abs{p-\ell }^2)s}\widehat P(s,k,p-\ell)\Big)\,d\ell.
\end{align*}
The density $\widehat{\rho_Q}$ can be expressed in terms of $P$ as \begin{align}\label{rhoQintermsofP}
    \widehat{\rho_Q}(t,k)&=\tpd\int_p e^{-i\hbar p\cdot kt}\widehat P(t,\mytextfrac{k}{2}+p,\mytextfrac{k}{2}-p)\,dp.
\end{align} Substituting the representation \eqref{Na} into this formula and applying the changes of variables \( p \mapsto p - \ell/2 \) and \( p \mapsto p + \ell/2 \) in the two nonlinear terms leads to
\begin{align}\label{Nb}\begin{split}
    \widehat{\rho_Q}(t,k)&=\widehat{\rho_{{Q_{\mathrm{FH}}}}}(t,k)\\
    &\hspace{0.25cm}-\frac{i\hbar^{d-1}}{(2\pi)^d}\widehat w(k)\int_p\sbr{g\br{\hbar(p-\mytextfrac{k}{2})}-g\br{\hbar(p+\mytextfrac{k}{2})}}\int_0^te^{-i\hbar p\cdot k(t-s)}\widehat{\rho_Q}(s,k)\,dsdp\\
    &\hspace{0.25cm}+\frac{2\hbar^{-1}}{(2\pi)^{2d}}\int_0^t\int_\ell\widehat w(\ell)\widehat{\rho_Q}(s,\ell) \sin\br{\mytextfrac{\hbar}{2}\ell\cdot k(t-s)}\\
    &\hspace{4cm}\times\int_p e^{-i\hbar p\cdot(kt-\ell s)}\widehat P\br{s,\mytextfrac{k-\ell}{2}+p,\mytextfrac{k-\ell}{2}-p}\,dpd\ell ds.
\end{split}
\end{align}The sine factor in the nonlinear term follows from the identity
\[
e^{-i\frac{\hbar}{2}\ell\cdot k(t-s)} - e^{i\frac{\hbar}{2}\ell\cdot k(t-s)} \equiv -2i\sin\left(\mytextfrac{\hbar}{2}\ell\cdot k(t-s)\right).
\]

\subsubsection{Wigner formulation and derivative bound} Taking the Wigner transform of equation~\eqref{Na}, as defined in Definition~\ref{Wignerdefinition}, yields the following evolution equation for $\widehat{W[P]}$: \begin{align}\label{WL}
    \begin{mycases}
     \p_t\widehat{W[P]}(t,k,\eta)=\widehat{W[{\mathrm{L}_P}]}(t,k,\eta)+\widehat{W[{\mathrm{N}_P}]}(t,k,\eta),\\
    \widehat{W[{\mathrm{L}_P}]}(t,k,\eta)=-\frac{2\hbar^{-1}}{(2\pi)^d}\widehat w(k)\widehat{\rho_Q}(t,k)\sin\br{\mytextfrac{\hbar}{2}k\cdot(\eta-kt)}\widehat g(\eta-kt),\\
    \widehat{W[{\mathrm{N}_P}]}(t,k,\eta)=-\frac{2\hbar^{-1}}{(2\pi)^d}\int_\ell \widehat{w}(\ell )\widehat{\rho_Q}(t,\ell )\sin\br{\mytextfrac{\hbar}{2}\ell \cdot(\eta-kt)}\widehat{W[P]}(t,k-\ell ,\eta-\ell t)\,d\ell,
\end{mycases}
\end{align}where $\mathrm{L}_{P}$ and $\mathrm{N}_{P}$ are defined in \eqref{Na}.

Essential to the analysis are bounds on derivatives of the Wigner transform in the \( \eta \) variable, \( D^\alpha_\eta\widehat{W[P]} \), corresponding to velocity weights \( \bm{\xi} \) on \( P \). For multi-indices \( \abs{\alpha} \geq 2 \), additional terms of order \( O(\hbar) \) arise from differentiation of the oscillatory phase, in stark contrast to the classical case.

\begin{lemma}\label{etalemma}
Let $\alpha$ be a multi-index with $\abs{\alpha}\geq 1$. The Wigner transform of ${\mathrm{L}_P}$ satisfies \begin{align*}
    \abs{D^\alpha_\eta\widehat{W[{\mathrm{L}_P}]}(t,k,\eta)}&\lesssim \ak\abs{\widehat w(k)}\abs{\widehat{\rho_Q}(t,k)}\abs{\eta-kt}\abs{D^\alpha_\eta\widehat g(\eta-kt)}\\
    &\hspace{1cm}+\ak\jb{\hbar k}^{\abs{\alpha}-1}\abs{\widehat w(k)}\abs{\widehat{\rho_Q}(t,k)}\sum_{\abs{\beta}\leq \aal-1}\abs{D^\beta_\eta\widehat g(\eta-kt)}.
\end{align*}
The Wigner transform of ${\mathrm{N}_P}$ satisfies
\begin{align*}
    \abs{D^\alpha_\eta\widehat{W[{\mathrm{N}_P}]}(t,k,\eta)}&\lesssim\int_\ell \abs{\ell }\abs{\widehat w(\ell )}\abs{\widehat{\rho_Q}(t,\ell )}\abs{\eta-kt}\abs{D_\eta^\alpha \widehat{W[P]}(t,k-\ell ,\eta-\ell t)}\,d\ell\\
    &\hspace{0.5cm}+\sum_{\abs{\beta}\leq \abs{\alpha}-1}\int_\ell\abs{\ell}\jb{\hbar \ell}^{\abs{\alpha}-1}\abs{\widehat w(\ell )}\abs{\widehat{\rho_Q}(t,\ell )}\abs{D_\eta^\beta \widehat{W[P]}(t,k-\ell ,\eta-\ell t)}\,d\ell.
\end{align*}
\end{lemma}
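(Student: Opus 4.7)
The plan is to treat both estimates by a direct computation: rewrite the exponential differences as sines, apply the Leibniz rule for $D_\eta^\alpha$, and use $|\sin\theta|\leq|\theta|$ on the single ``unshifted'' term while using the trivial bound $|\sin\theta|,|\cos\theta|\leq 1$ on the derivatives of sine. The only thing to watch carefully is the bookkeeping of the $\hbar$ factors so that the $\hbar^{-1}$ prefactor cancels exactly one power of $\hbar|k|$ (respectively $\hbar|\ell|$), which is what produces the $|k|\jb{\hbar k}^{|\alpha|-1}$ (resp.\ $|\ell|\jb{\hbar\ell}^{|\alpha|-1}$) factors in the statement.

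First I would note that
\[
e^{-i\frac{\hbar}{2}k\cdot(\eta-kt)}-e^{i\frac{\hbar}{2}k\cdot(\eta-kt)}=-2i\sin\!\br{\textfrac{\hbar}{2}k\cdot(\eta-kt)},
\]
so that $\widehat{W[{P_{\,\mathrm L}}]}$ becomes a product of $\widehat w(k)\widehat{\rho_Q}(t,k)$ with the factor $\sin(\frac{\hbar}{2}k\cdot(\eta-kt))\widehat g(\eta-kt)$, up to the $\hbar^{-1}$ prefactor. Applying Leibniz,
\[
D_\eta^\alpha\sbr{\sin\!\br{\textfrac{\hbar}{2}k\cdot(\eta-kt)}\widehat g(\eta-kt)}=\sum_{\beta\leq\alpha}\binom{\alpha}{\beta}\,D_\eta^\beta\sbr{\sin\!\br{\textfrac{\hbar}{2}k\cdot(\eta-kt)}}\,D_\eta^{\alpha-\beta}\widehat g(\eta-kt),
\]
and the derivatives of the sine factor are straightforward: for $\beta=0$ use $|\sin\theta|\leq|\theta|$ to get a factor $\frac{\hbar}{2}|k|\,|\eta-kt|$, and for $|\beta|\geq 1$ each differentiation produces one component of $\frac{\hbar}{2}k$, giving a bound $(\frac{\hbar}{2}|k|)^{|\beta|}$. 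Multiplying by the $\hbar^{-1}$ prefactor, the $\beta=0$ term yields exactly $|k|\,|\widehat w(k)|\,|\widehat{\rho_Q}(t,k)|\,|\eta-kt|\,|D^\alpha_\eta\widehat g(\eta-kt)|$, while the $|\beta|\geq 1$ terms yield $|k|(\hbar|k|)^{|\beta|-1}|\widehat w(k)||\widehat{\rho_Q}(t,k)||D^{\alpha-\beta}_\eta\widehat g(\eta-kt)|$, which is controlled by $|k|\jb{\hbar k}^{|\alpha|-1}|\widehat w(k)||\widehat{\rho_Q}(t,k)|\sum_{|\gamma|\leq|\alpha|-1}|D_\eta^\gamma\widehat g(\eta-kt)|$. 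Summing these pieces gives the first assertion.

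For $\widehat{W[{P_{\,\mathrm{NL}}}]}$ I would carry out an identical computation inside the $\ell$-integral, with $\ell$ in place of $k$ in the phase and $\widehat{W[P]}(t,k-\ell,\eta-\ell t)$ in place of $\widehat g(\eta-kt)$. The Leibniz rule in $\eta$ splits the derivatives between $\sin(\frac{\hbar}{2}\ell\cdot(\eta-kt))$ and $\widehat{W[P]}(t,k-\ell,\eta-\ell t)$; note that the second factor is a function of $\eta-\ell t$, so $D_\eta^{\alpha-\beta}$ passes directly onto $\widehat{W[P]}$. The same two-case analysis ($\beta=0$ using $|\sin\theta|\leq|\theta|$; $|\beta|\geq 1$ using $(\frac{\hbar}{2}|\ell|)^{|\beta|}$), combined with the $\hbar^{-1}$ prefactor, yields the two families of terms in the second display.

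There is no serious obstacle here: the result is essentially a careful Leibniz computation. The only mildly delicate point is the $\hbar$-counting, where it is important to separate off a single power of $\hbar|k|$ (or $\hbar|\ell|$) so that the $\hbar^{-1}$ cancels precisely, leaving the remaining $|\beta|-1$ powers to be absorbed into $\jb{\hbar k}^{|\alpha|-1}$ uniformly in $0<\hbar\leq 1$; this uniformity is what makes the estimate usable in the semiclassical limit.
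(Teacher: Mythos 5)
Your proposal is correct and follows essentially the same route as the paper: a Leibniz expansion in $\eta$, the bound $\abs{\sin\theta}\leq\abs{\theta}$ (equivalently the Mean Value Theorem on the exponential difference) on the term where all derivatives fall on $\widehat g$ (resp.\ $\widehat{W[P]}$), the trivial bound on the differentiated phase factor otherwise, and the observation that $\hbar^{-1}(\hbar\ak)^{\abs{\beta}}\lesssim\ak\jb{\hbar k}^{\aal-1}$ uniformly in $0<\hbar\leq1$. The only difference is cosmetic (writing the exponential difference as a sine before differentiating).
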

\begin{proof}
Applying the Leibniz rule to the linear term, gives
    \begin{align*}
        D_\eta^\alpha\sbr{\sin\br{\mytextfrac{\hbar}{2}k\cdot(\eta-kt)}\widehat g(\eta-kt)}=\sum_{\beta\leq \alpha}\begin{pmatrix}
            \alpha\\
            \beta
        \end{pmatrix}D_\eta^{\alpha-\beta}\sbr{\sin\br{\mytextfrac{\hbar}{2}k\cdot(\eta-kt)}}D^\beta_\eta\widehat g(\eta-kt).
    \end{align*}
    For the term with \( \beta = \alpha \), use \( \abs{\sin x} \leq \abs{x} \) to get
    \begin{align*}
        \abs{\sin\br{\mytextfrac{\hbar}{2}k\cdot(\eta-kt)}}\leq \hbar\abs{k}\abs{\eta-kt}.
    \end{align*}
    For terms with \( \beta < \alpha \), differentiate $\sin$ directly and use \( \abs{\sin x},\abs{\cos x} \leq 1 \) to obtain\begin{align*}
         \abs{D^{\alpha-\beta}_{\eta}\sbr{\sin\br{\mytextfrac{\hbar}{2}k\cdot(\eta-kt)}}}\leq (\hbar \abs{k})^{\abs{\alpha-\beta}}.
    \end{align*} This leads to the estimate
    \begin{align*}
          &\abs{D_\eta^\alpha\sbr{\br{e^{-i\frac{\hbar}{2}k\cdot(\eta-kt)}-e^{i\frac{\hbar}{2}k\cdot(\eta-kt)}}\widehat g(\eta-kt)}}\\
          &\hspace{1.5cm}\lesssim \hbar\ak\abs{\eta-kt}\abs{D_\eta^\alpha\widehat g(\eta-kt)}+\sum_{\beta<\alpha}(\hbar\ak)^{\abs{\alpha-\beta}}\abs{D^{\beta}\widehat g(\eta-kt)}.
    \end{align*}Since for any \( \beta < \alpha \), $(\hbar |k|)^{|\alpha - \beta|} \lesssim \hbar |k| \jb{\hbar k}^{|\alpha| - 1}$, this gives the bound for $\widehat{W[{\mathrm{L}_P}]}$. The estimate for \( \widehat{W[{\mathrm{N}_P}]} \) follows similarly.\end{proof}

\subsection{Bootstrap scheme}This subsection introduces the bootstrap scheme used to prove the nonlinear phase-mixing estimates, based on propagating a set of weighted quantum Sobolev norms.
\subsubsection{Bootstrap assumptions}\label{bootstrapassumptionssection}
Fix positive constants $\sigma_4>\sigma_3>\sigma_2>\sigma_1$ such that \begin{align}\label{sigmai}
    \sigma_1\geq d+8,\quad \sigma_2-\sigma_1>3/2,\quad\sigma_3-\sigma_2>d/2,\quad\sigma_4-\sigma_3>(d+1)/2,\quad \sigma_4<N_0-(d+3)/2
\end{align}and constants $K_i\geq1$ determined by the proof. Set $0<\delta\ll1/2$ such that $\sigma_4-\sigma_3>(d+1)/2+\delta$. Let $I=[0,T^*]$ be the largest connected interval containing zero such that the following bootstrap controls hold:
\begin{align}
\sum_{\aal\leq M}\norm{\jb{\bm{\nabla}_x,\bm{\nabla}_\xi}^{\sigma_4}\jb{t\bm{\nabla}_x,\bm{\nabla}_\xi}\br{\bm{\xi}^\alpha P(t)}}_{\mathcal L^2}^2&\leq 4K_1\jb{t}^{2d-1}\ep^2,\vphantom{\int_a^b}\label{B1}\tag{B1}\\
     \norm{\jb{\hbar k}\ak^{1/2}\jb{k,kt}^{\sigma_4}\widehat{\rho_Q}(t,k)}_{L^2_{t}([0,T^*],L^2_k)}^2&\leq 4K_2\ep^2,\vphantom{\int_a}\label{B2}\tag{B2}\\
    \sum_{\aal\leq M}\norm{\jb{\bm{\nabla}_x,\bm{\nabla}_\xi}^{\sigma_3}\abs{\bm{\nabla}_x}^\delta \br{\bm{\xi}^\alpha P(t)}}_{\mathcal L^2}^2&\leq 4K_3\ep^2,\vphantom{\int_a^b}\tag{B3}\label{B3}\\
    \norm{\ak^{1/2}\jb{k,kt}^{\sigma_2}\widehat{\rho_{Q}}(t,k)}_{L^2_{t}([0,T^*],L^\infty_k)}^2&\leq 4K_4\ep^2,\vphantom{\int_a}\tag{B4}\label{B4}\\
    \jb{k,\eta}^{2\sigma_1}\abs{\int_p e^{-i\hbar \eta\cdot p }\widehat{P}\br{\mytextfrac{k}{2}+p,\mytextfrac{k}{2}-p}\,dp}^2&\leq 4K_5\ep^2.\vphantom{\int_a^b}\label{B5}\tag{B5}
\end{align}
\begin{remark}The bootstrap controls \eqref{B1}, \eqref{B3}, and \eqref{B5} are analogous to those used by Bedrossian, Masmoudi and Mouhot \cite{BedrossianMasmoudiMouhot2018}, but adapted to the quantum setting. A key refinement in the present setting is the inclusion of the weight $\jb{\hbar k}$ inside the norm in \eqref{B2}, which has no analogue in the classical framework. Precisely, the additional norm \begin{align*}
    \norm{\hbar k\ak^{1/2}\jb{k,kt}^{\sigma_4}\widehat{\rho_Q}(t,k)}_{L^2_{t}([0,T^*],L^2_k)}^2\leq 4K_2\ep^2
\end{align*} is propagated, reflecting a structural difference between the Hartree and Vlasov equations. While the quantum setting allows control of one more derivative, uniformity in the semiclassical limit requires pairing it with a factor of $\hbar$. As $\hbar\to0$, the norm in \eqref{B2} reduces to the classical counterpart used in \cite{BedrossianMasmoudiMouhot2018}, and the norm in \eqref{B4} already coincides with its classical analogue.
\end{remark}

\begin{remark}The decay statement \eqref{T1} in Theorem~\ref{maintheorem} follows solely from the control \eqref{B5}, and the scattering statement \eqref{T2} follows from \eqref{B3} and \eqref{B5}, as shown in Lemmas~\ref{rhodecayfromB5} and~\ref{scatteringlemma}.
\end{remark}
\subsubsection{Justification of the bootstrap norms}
The bootstrap assumptions \eqref{B1}, \eqref{B3}, and \eqref{B5} are initially satisfied under the smallness condition \( \norm{\Qin}_{\mathcal H^{N_0}_{M,M}} \leq \ep \) in Theorem~\ref{maintheorem}. For instance, at \( t = 0 \), assumption \eqref{B5} can be written in terms of the Wigner transform as
\begin{align*}\jb{k,\eta}^{2\sigma_1}\abs{\widehat{W[\Qin]}(k,\eta)}^2=\abs{\reallywidehat{W[\jb{\bm{\nabla}_x,\bm{\nabla}_\xi}^{\sigma_1}\Qin]}(k,\eta)}^2\lesssim\norm{\Qin}_{\mathcal H^{\sigma_1}_{M,M}}^2\leq\ep^2,
\end{align*}where Lemma~\ref{FTofWignertransform} has been used. The density estimates \eqref{B2} and \eqref{B4} are trivially satisfied at \( t = 0 \), but their long-time structure is less obvious. The next lemma shows that these norms are natural: for the free Hartree evolution, they are directly controlled by the initial data \( \Qin \). This motivates their appearance in the bootstrap framework.

\begin{lemma}\label{b2b4arenatural}
    Fix $\sigma>0$ and let $\rho_{Q_{\mathrm{FH}}}$ be the density of the solution to the free Hartree equation \eqref{FH}, defined in \eqref{FHsol}. Then, for $s_1>(d+1)/2$ and $s_2>1/2$,
    \begin{align*}
        \norm{\ak^{1/2}\jb{k,kt}^{\sigma}\widehat{\rho_{Q_{\mathrm{FH}}}}(t,k)}_{L^2_{t}([0,\infty),L^2_k)}&\lesssim_{s_1}\norm{\Qin}_{{\mathcal H^{\sigma+s_1}_{M}}}\\
        \norm{\ak^{1/2}\jb{k,kt}^{\sigma}\widehat{\rho_{Q_{\mathrm{FH}}}}(t,k)}_{L^2_{t}([0,\infty),L^\infty_k)}&\lesssim_{s_2}\norm{\Qin}_{{\mathcal H^{\sigma+s_2}_{M}}}.
    \end{align*}
\end{lemma}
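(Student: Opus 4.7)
The plan is to derive a single pointwise decay estimate for $\widehat{\rho_{Q_{\mathrm{FH}}}}(t,k)$ and then reduce each norm bound to a one-dimensional integral. The starting point is the identity $\widehat{\rho_{Q_{\mathrm{FH}}}}(t,k) = \widehat{W[\Qin]}(k,kt)$ recorded immediately before Proposition \ref{propfreephasemixing}. Iterating Lemma \ref{operatorsWigner} exactly as in the proof of that proposition gives, for any $s\ge 0$,
\[
\jb{k,kt}^{s}\widehat{\rho_{Q_{\mathrm{FH}}}}(t,k) = \reallywidehat{W\sbr{\jb{\bm{\nabla}_x,\bm{\nabla}_\xi}^{s}\Qin}}(k,kt),
\]
and an application of Lemma \ref{FTofWignertransform} then produces the $\hbar$-uniform pointwise bound
\[
\jb{k,kt}^{s}\abs{\widehat{\rho_{Q_{\mathrm{FH}}}}(t,k)} \lesssim_d \norm{\Qin}_{\mathcal H^{s}_M}.
\]

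With the pointwise bound assembled, the remaining work is a one-dimensional calculation. The substitution $u=\abs{k}t$ followed by $u=\jb{k}v$ gives
\[
\abs{k}\int_0^\infty\jb{k,kt}^{-2s}\,dt = \int_0^\infty(1+\abs{k}^2+u^2)^{-s}\,du = \jb{k}^{1-2s}\int_0^\infty(1+v^2)^{-s}\,dv \lesssim_s \jb{k}^{1-2s},
\]
which is finite whenever $s>1/2$. Setting $s=\sigma+s_2$ with $s_2>1/2$ and noting that $\jb{k}^{1-2s_2}\le 1$ uniformly in $k$ yields the $L^\infty_kL^2_t$ estimate at once. For the $L^2_kL^2_t$ estimate I take $s=\sigma+s_1$ and integrate the inequality in $k$, reducing the claim to the finiteness of $\int_{\R^d}\jb{k}^{1-2s_1}\,dk$; this integral converges exactly when $s_1>(d+1)/2$, which matches the hypothesis of the lemma.

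There is no substantial obstacle here: once the pointwise bound has been set up from Lemmas \ref{operatorsWigner} and \ref{FTofWignertransform}, both inequalities reduce to the display above. The only minor technicality is the interpretation of $\jb{\bm{\nabla}_x,\bm{\nabla}_\xi}^s$ for non-integer $s$, which I will handle by the functional calculus applied to the commuting phase-space derivatives $\bm{\nabla}_x$ and $\bm{\nabla}_\xi$; alternatively, since the assertion is insensitive to an $\varepsilon$-enlargement of $s_1$ or $s_2$, one may take $s$ to be an integer without loss of generality and iterate Lemma \ref{operatorsWigner} the requisite number of times. A second bookkeeping point is that $\norm{\jb{\bm{\nabla}_x,\bm{\nabla}_\xi}^s\Qin}_{\mathcal H^0_M}$ and $\norm{\Qin}_{\mathcal H^s_M}$ differ only by commutators between the phase-space weights $\bm x^\beta\bm\xi^\alpha$ and $\jb{\bm{\nabla}_x,\bm{\nabla}_\xi}^s$, which are strictly lower order and so do not disturb the bound.
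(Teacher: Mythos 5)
Your proof is correct and follows essentially the same route as the paper: both arguments reduce the claim to the pointwise bound $\jb{k,kt}^{\sigma+s}\abs{\widehat{\rho_{Q_{\mathrm{FH}}}}(t,k)}\lesssim\norm{\Qin}_{\mathcal H^{\sigma+s}_M}$ (via Lemmas \ref{operatorsWigner} and \ref{FTofWignertransform}) together with the convergence of $\ak\int_0^\infty\jb{k,kt}^{-2s}\,dt\lesssim\jb{k}^{1-2s}$, integrated in $k$ or not according to which norm is taken. The ordering issue between the phase-space weights and $\jb{\bm{\nabla}_x,\bm{\nabla}_\xi}^{s}$ that you flag is handled the same way (implicitly) in the paper, so there is nothing to add.
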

\begin{proof}
    First recall that $\widehat{\rho_{Q_{\mathrm{FH}}}}(t,k)=\widehat{W[\Qin]}(k,kt)$. 
    \begin{align*}
        \norm{\ak^{1/2}\jb{k,kt}^{\sigma}\widehat{\rho_Q}(t,k)}_{L^2_{t}([0,\infty),L^\infty_k)}&=\br{\int_{t=0}^{\infty}\int_k\ak\jb{k,kt}^{2\sigma}\abs{\widehat{W[\Qin]}(k,kt)}^2\,dkdt}^{1/2}\\
        &\leq\sup_{k,\eta}\abs{\jb{k,\eta}^{\sigma+s_1}\widehat{W[\Qin]}(k,\eta)}\br{\int_k\int_{t=0}^{\infty}\frac{\ak}{\jb{k,kt}^{2s_1}}\,dtdk}^{1/2}.
    \end{align*}
Then, by making the change of variables $t\mapsto t/\ak$, and denoting $\overline{\ep}=s_1-\frac{d+1}{2},$ 
\begin{align*}
    \int_k\int_{t=0}^{\infty}\frac{\ak}{\jb{k,kt}^{s_1}}\,dtdk=\int_k\int_{t=0}^\infty \frac{1}{\jb{k,t}^{d+1+2\overline{\ep}}}\,dtdk\lesssim\int_k\frac{1}{\jb{k}^{d+\overline{\ep}}}\,dk\int_\R\frac{1}{\jb{t}^{1+\overline{\ep}}}\,dt\lesssim1.
\end{align*} The bound then follows by applying Lemma~\ref{FTofWignertransform}. The second estimate is proved similarly.
\end{proof}

\subsubsection{Bootstrap proposition and strategy}\label{conclusionsection}
The following proposition forms the backbone of the nonlinear analysis, providing a bootstrap framework to control the evolution globally in time.
\begin{proposition}[Bootstrap]\label{bootstrapprop}
Let the assumptions on $w$, $g$ and $\Qin$ be as in Theorem~\ref{maintheorem}. There exist $\ep_0=\ep_0(N_0,w,g,\kappa,d,\delta_0,\sigma_i)$ and $K_i=K_i(N_0,w,g,\kappa,d,\delta_0,\sigma_i)$ such that if the bootstrap controls \eqref{B1}-\eqref{B5} hold on some time interval $[0,T^*]$ and $\ep<\ep_0$, then 
\begin{align}
\sum_{\aal\leq M}\norm{\jb{\bm{\nabla}_x,\bm{\nabla}_\xi}^{\sigma_4}\jb{t\bm{\nabla}_x,\bm{\nabla}_\xi}\br{\bm{\xi}^\alpha P(t)}}_{\mathcal L^2}^2&\leq 2K_1\jb{t}^{2d-1}\ep^2,\vphantom{\int_a^b}\tag{I1}\label{I1}\\
     \norm{\jb{\hbar k}\ak^{1/2}\jb{k,kt}^{\sigma_4}\widehat{\rho_Q}(t,k)}_{L^2_{t}([0,T^*],L^2_k)}^2&\leq 2K_2\ep^2,\vphantom{\int_a}\tag{I2}\label{I2}\\
     \sum_{\aal\leq M}\norm{\jb{\bm{\nabla}_x,\bm{\nabla}_\xi}^{\sigma_3}\abs{\bm{\nabla}_x}^\delta\br{\bm{\xi}^\alpha P(t)}}_{\mathcal L^2}^2&\leq 2K_3\ep^2,\vphantom{\int_a^b}\tag{I3}\label{I3}\\
    \norm{\ak^{1/2}\jb{k,kt}^{\sigma_2}\widehat{\rho_{Q}}(t,k)}_{L^2_{t}([0,T^*],L^\infty_k)}^2&\leq 2K_4\ep^2,\vphantom{\int_a}\tag{I4}\label{I4}\\
    \jb{k,\eta}^{2\sigma_1}\abs{\int_p e^{-i\hbar \eta\cdot p }\widehat{P}\br{\mytextfrac{k}{2}+p,\mytextfrac{k}{2}-p}\,dp}^2&\leq 2K_5\ep^2,\vphantom{\int_a^b}\tag{I5}\label{I5}
\end{align}for all $t\in[0,T^*]$. It follows that $T^*=\infty$.
\end{proposition}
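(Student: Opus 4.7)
The plan is a coupled bootstrap in four waves. Assuming \eqref{B1}--\eqref{B5} on $[0,T^*]$, I first improve the two density estimates \eqref{I2}, \eqref{I4} via the Volterra equation \eqref{NLb} and Penrose inversion; second, the Wigner bound \eqref{I5} via \eqref{WL} using the improved density; third, the uniform-in-$t$ operator estimate \eqref{I3} via \eqref{NLa}; and fourth, the growing operator estimate \eqref{I1} via the same equation carrying the extra weight $\jb{t\bm{\nabla}_x,\bm{\nabla}_\xi}$. The smallness $\ep_0$ absorbs the factors of $\ep$ coming from the nonlinear sources.

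\textbf{Density step.} Rewrite \eqref{NLb} in the form
\[
\widehat{\rho_Q}(t,k) + \br{\mathscr L(\cdot,k) *_t \widehat{\rho_Q}(\cdot,k)}(t) = \widehat{\rho_{Q_{\mathrm{FH}}}}(t,k) + \mathcal N(t,k),
\]
where $\mathscr L(t,k)$ is the $k$-parametrised convolution kernel whose Laplace transform is $\widetilde{\mathscr L}$, and $\mathcal N$ collects the off-diagonal reaction involving $\widehat w(\ell)\widehat{\rho_Q}(s,\ell)\widehat P$. The Penrose bound \eqref{Penrose} yields a resolvent kernel $\mathscr K$ whose $L^2_t$ operator norm is controlled, uniformly in $\hbar$, by $\kappa^{-1}$ via Plancherel along $\Re\lam=0$. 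The free-density contribution is handled by Lemma \ref{b2b4arenatural}, while $\mathcal N$ pairs $\widehat{\rho_Q}(s,\ell)$ with $\int_p e^{-i\hbar p\cdot(kt-\ell s)}\widehat P(s,\tfrac{k-\ell}{2}+p,\tfrac{k-\ell}{2}-p)\,dp$, precisely the quantity controlled by \eqref{B5} at a shifted argument. The density factor is absorbed by \eqref{B2}, \eqref{B4}, and integrability in $\ell$ is produced by the decay of $\widehat w$ together with the gaps $\sigma_2-\sigma_1$, $\sigma_4-\sigma_3$ in \eqref{sigmai}. This closes \eqref{I2} and \eqref{I4}.

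\textbf{Wigner and operator steps.} With the density upgraded, integrating \eqref{WL} in time and applying Lemma \ref{etalemma} with $\alpha=0$ expresses $\widehat{W[P]}(t,k,\eta)$ as $\widehat{W[\Qin]}(k,\eta)$ plus time integrals with sources $\widehat{\rho_Q}(s,k)\widehat g(\eta-ks)$ and $\widehat{\rho_Q}(s,\ell)\widehat{W[P]}(s,k-\ell,\eta-\ell s)$. Multiplying by $\jb{k,\eta}^{\sigma_1}$ and applying Cauchy--Schwarz in $s$ together with the newly improved \eqref{I4}, the regularity of $\widehat g$, and \eqref{B5}, closes \eqref{I5}. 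For the operator bounds I apply $\bm{\xi}^\alpha\jb{\bm{\nabla}_x,\bm{\nabla}_\xi}^{\sigma_3}\abs{\bm{\nabla}_x}^\delta$ (respectively $\bm{\xi}^\alpha\jb{\bm{\nabla}_x,\bm{\nabla}_\xi}^{\sigma_4}\jb{t\bm{\nabla}_x,\bm{\nabla}_\xi}$) to \eqref{NLa} and take $L^2_{x,y}$ norms; the resulting time integrals are bounded by Cauchy--Schwarz in $s$, with the density factor handled via \eqref{I2} in $L^2_{t,k}$ and the $P$-factor controlled by \eqref{B3} (respectively \eqref{B1}). The $\jb{t}^{2d-1}$ budget in \eqref{I1} corresponds to the worst-case $\abs{kt}^{d-1/2}$ loss produced by distributing the weight $\jb{t\bm{\nabla}_x,\bm{\nabla}_\xi}$ across the nonlinearity.

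\textbf{Main obstacle.} The principal difficulty is the density step. The nonlinear reaction $\mathcal N$ in \eqref{NLb} carries a prefactor $\hbar^{-1}$ and the quantum Penrose kernel encodes the sine cancellation $\sin(\tfrac12 \hbar t \ak^2)$; resolvent bounds uniform in $\hbar\in(0,1]$ must therefore exploit the finite-difference structure $\tfrac{1}{\hbar}[g(-\hbar p)-g(\hbar k)]$ rather than bound each piece separately. The precise hierarchy $\sigma_1<\sigma_2<\sigma_3<\sigma_4<N_0$ in \eqref{sigmai} is forced by the need for each bootstrap to borrow exactly enough spatial or temporal integrability from the one above it; any slack risks destroying closure, and any non-uniformity in $\hbar$ of the Penrose resolvent would invalidate the classical limit highlighted in Figure \ref{map}.
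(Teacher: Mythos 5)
Your high-level architecture (Penrose inversion of the Volterra equation for the density estimates, then the Wigner bound, then the operator bounds, with the constants $K_2,K_4$ fixed before $K_5$ and then $K_1,K_3$) matches the paper's. However, there are two genuine gaps, and you have misidentified the main obstacle.

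First, the density step. You claim the bilinear reaction term in \eqref{NLb} is closed because "integrability in $\ell$ is produced by the decay of $\widehat w$ together with the gaps $\sigma_2-\sigma_1$, $\sigma_4-\sigma_3$." For \eqref{I2} this fails: the estimate is at the \emph{top} density regularity $\sigma_4$ with no time growth allowed, so when the weight $\jb{k,kt}^{\sigma_4}$ lands on the density factor $\widehat{\rho_Q}(s,\ell)$ (the reaction part), the unknown appears on both sides at the same regularity and there is no gap to borrow from. One is left with an integral operator with kernel $\jb{s}\ak^{1/2}\abs{\ell}^{1/2}\jb{\ell}^{-2}\jb{k-\ell,kt-\ell s}^{-(\sigma_1-1)}$ acting on $\ak^{1/2}\jb{k,ks}^{\sigma_4}\widehat{\rho_Q}$ in $L^2_{t,k}$, and the $\jb{s}$ growth combined with the near-collinear degeneracy of $\jb{k-\ell,kt-\ell s}$ is precisely the plasma-echo/resonance cascade. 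The paper controls this by a resonant/non-resonant decomposition of $\ell$-space into a time-shrinking cylinder about $k$ and its complement, plus Schur's test (Lemma \ref{lemmaschurcontrolofresonances}), and then absorbs the resulting $O(\ep^2)$ operator norm into the left-hand side. Your proposal contains no mechanism for this, and the step you flag as the principal difficulty (uniformity in $\hbar$ of the Penrose resolvent) is comparatively routine: the factor $\hbar^{-1}\sin(\tfrac12\hbar t\ak^2)$ is bounded by $\tfrac12 t\ak^2$ and handled by Lemma \ref{gnorm}.

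Second, the operator step for \eqref{I1}. Applying $\jb{\bm{\nabla}_x,\bm{\nabla}_\xi}^{\sigma_4}\jb{t\bm{\nabla}_x,\bm{\nabla}_\xi}$ to the Duhamel form \eqref{NLa} and estimating the nonlinear time integral by Cauchy--Schwarz against \eqref{B1} loses a derivative at top order: when the full weight $\jb{\eta}\jb{k,\eta}^{\sigma_4}$ together with the factor $\abs{\eta-kt}$ coming from $\hbar^{-1}(e^{-i\frac{\hbar}{2}\ell\cdot(\eta-kt)}-e^{i\frac{\hbar}{2}\ell\cdot(\eta-kt)})$ is transferred to $\widehat{W[P]}(s,k-\ell,\eta-\ell s)$, one needs $\sigma_4+1$ derivatives plus the velocity weight on $P$, which exceeds what \eqref{B1} controls. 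The paper avoids this by running a differential energy estimate $\frac{d}{dt}\norm{\cdot}^2$ and exploiting an antisymmetry: the top-order piece in which the weight is \emph{not} commuted is shown to be real (hence drops from $\Re$ of the energy identity) by a change of variables using the self-adjointness of $Q$ and reality of $w$, leaving only the commutator $\jb{\eta}\jb{k,\eta}^{\sigma_4}-\jb{\eta-\ell t}\jb{k-\ell,\eta-\ell t}^{\sigma_4}$, which gains one derivative by the Mean Value Theorem. Some such cancellation is indispensable; a direct integral-equation bound as you describe does not close.
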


\begin{remark}
Among the improvements, \eqref{I2} presents the main analytical difficulty, as it permits neither regularity loss nor time growth. Precise control is required over the nonlinear interaction term in \eqref{Nb}, which includes a bilinear contribution in the perturbation and allows earlier density modes $\widehat{\rho_Q}(s,\ell)$ to influence the evolution of $\widehat{\rho_Q}(t,k)$. In certain regimes, this interaction may accumulate and produce \textit{resonances} at large times, see e.g.\ \cite{Bedrossian2021Echoes}. In the Hartree equation on $\R^d$ with $d\geq 3$ and sufficiently regular $w$, such resonances are controlled by a dispersive mechanism. This relies on the set of modes capable of significantly influencing $\widehat{\rho_Q}(t,k)$ shrinking over time to those $\widehat{\rho_Q}(s,\ell)$ with $\abs{k\cdot\ell}\lesssim\jb{s}^{-\zeta}$ for some $\zeta>0$, corresponding to increasingly collinear configurations. The resulting resonance kernel matches that arising in the Vlasov–Poisson system and is handled similarly in Section~\ref{resonancessection}.
\end{remark}

\begin{figure}[ht]
\vspace{0cm}
\centering
\begin{tikzpicture}[domain=-5.8:3.6,xscale=0.57,yscale=0.27,samples=150]
\node [left] at (-5,0) {$\mathrm{(B2)}$};
\node [above] at (-1.55,4.76) {$\mathrm{(B1)}$};
\node [below] at (-1.55,-4.76) {$\mathrm{(B5)}$};
\node [above] at (4.04,2.94) {$\mathrm{(B3)}$};
\node [below] at (4.04,-2.94) {$\mathrm{(B4)}$};
\draw [<->] (-5,+0.5)--(-1.55-0.75,4.76+0.6); 
\draw [<->] (-1.55+0.8,4.76+0.9)--(4.04-0.8,2.94+1.1); 
\draw [<->] (-1.55+0.4,-4.76+0.25)--(4.04-0.5,2.94); 
\draw [<->] (-1.55+0.7,-4.76-0.8)--(4.04-0.8,-2.94-1.1); 
\draw [->] (-1.55-0.75,-4.76-0.5)--(-5,0-0.6); 
\draw [->] (-1.55,-4.76+0.25)--(-1.55,4.76); 
\draw [->] (4.04,2.94)--(4.04,-2.94); 
\draw [->] plot (\x,{-0.3*(\x+1.2+5)*(\x+1.2-6.7)}); 
\draw [->] plot (\x,{0.3*(\x+1.1+5)*(\x+1.1-6.5)}); 
\end{tikzpicture}
\captionsetup{width=.7\linewidth}
\caption{Bootstrap dependencies: each arrow indicates an assumption used to establish an improvement, e.g.\ \eqref{B1} $\rightarrow$ \eqref{B2} reflects the use of \eqref{B1} in proving \eqref{I2}.}

\label{bootstrapfigure}
\end{figure}

The proposition is proved below using the estimates established in Sections~\ref{sectiondensity} and~\ref{sectiondensitymatrix}.

\begin{proof}[Proof of Proposition~\ref{bootstrapprop}]
Fix dimension $d\geq 3$, constants $\sigma_0$, $\sigma_1$, $N_0$, kernel $w$, steady state $g$ and initial perturbation $\Qin$ as in Theorem~\ref{maintheorem} and choose parameters $\sigma_2$, $\sigma_3$, $\sigma_4$ satisfying \eqref{sigmai}. Section~\ref{sectiondensity} establishes that the constants $K_2$ and $K_4$ appearing in the bootstrap assumptions \eqref{B2} and \eqref{B4} may be chosen depending only on the fixed setup. Section~\ref{sectiondensitymatrix} then shows that the remaining constants $K_1$, $K_3$, and $K_5$, can also be selected depending only on the setup and on $K_2, K_4$. If $\ep_0$ satisfies
\begin{align*}
    0<\ep_0\leq\min\set{\frac{1}{\sqrt{K_3K_5+K_2K_5}},\frac{1}{\sqrt{K_4K_5}+\sqrt{K_3K_5}},\frac{1}{\sqrt{K_2K_3}+\sqrt{K_1K_5}+\sqrt{K_3K_5}}},
\end{align*}
then, for any $0<\ep\leq \ep_0$, all bootstrap improvements \eqref{I1}-\eqref{I5} hold by Sections \ref{sectionb2}, \ref{sectionb4}, \ref{sectionb5}, \ref{sectionb1} and \ref{sectionb3}.

The proof is concluded by showing that $T^*=\infty$ using a standard bootstrap argument. Define the set
\begin{align*}
    \mathcal A=\set{T\in[0,\infty):\text{bootstrap assumptions \eqref{B1}-\eqref{B5} hold on $[0,T]$}}.
\end{align*}By local well-posedness, $0\in\mathcal A$, so $\mathcal A\neq \emptyset$. The set $\mathcal A$ is closed by continuity and connected by construction. Finally, the bootstrap improvement guarantees that $\mathcal A$ is open: if $T\in \mathcal A$, then Proposition \ref{bootstrapprop} ensures that the solution can be continued beyond $T$. Hence, $\mathcal A=[0,\infty)$, and the bootstrap assumptions hold globally in time, i.e.\ $T^*=\infty$.\end{proof}

\subsection{Deduction of Main Theorem}This section completes the proof of Theorem~\ref{maintheorem} by showing how the decay and scattering statements \eqref{T1} and \eqref{T2} follow from the bootstrap estimates.

The decay estimate on the density \eqref{T1} follows directly from the bootstrap control \eqref{B5} on the Fourier transform of the density.
\begin{lemma}Proposition \ref{bootstrapprop} implies the damping estimate \eqref{T1} in Theorem~\ref{maintheorem}.\label{rhodecayfromB5}
\end{lemma}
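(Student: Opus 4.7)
The plan is to observe that the damping estimate \eqref{T1} is essentially an evaluation of the bootstrap improvement \eqref{I5} at a specific value of the frequency variable $\eta$. Recall from the coordinate shift $Q = e^{i\frac{\hbar}{2}t(\Delta_x-\Delta_y)}P$ and Lemma \ref{densityofoperator} that the density of $Q$ admits the representation
\begin{align*}
    \widehat{\rho_Q}(t,k)=\tpd\int_p e^{-i\hbar p\cdot kt}\widehat P\br{t,\textfrac{k}{2}+p,\textfrac{k}{2}-p}\,dp,
\end{align*}
which is precisely the integral appearing inside the absolute value in \eqref{I5}, evaluated at $\eta = kt$, up to the prefactor $(2\pi)^{-d}$.

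First I would specialize the bootstrap improvement \eqref{I5} by setting $\eta = kt$. Since $\eta$ is a free variable in \eqref{I5}, this is a legitimate pointwise evaluation, and yields
\begin{align*}
    \jb{k,kt}^{2\sigma_1}\abs{\int_p e^{-i\hbar kt\cdot p}\widehat P\br{t,\textfrac{k}{2}+p,\textfrac{k}{2}-p}\,dp}^2\leq 2K_5\ep^2.
\end{align*}
Identifying the integral with $(2\pi)^d\widehat{\rho_Q}(t,k)$ via the formula above and taking square roots then gives
\begin{align*}
    \jb{k,kt}^{\sigma_1}\abs{\widehat{\rho_Q}(t,k)}\leq \tpd\sqrt{2K_5}\,\ep,
\end{align*}
which is exactly \eqref{T1} with $C = (2\pi)^{-d}\sqrt{2K_5}$. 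Since Proposition \ref{bootstrapprop} asserts that $T^*=\infty$, the bound holds for all $t\geq 0$, completing the proof.

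There is no real obstacle here, as the lemma is designed so that \eqref{I5} already encodes \eqref{T1} in a slightly stronger form (uniform in $\eta$, not only along the characteristic $\eta=kt$). The only thing to be careful about is the bookkeeping of the constant factor $(2\pi)^{-d}$ from the inverse Fourier transform used in Lemma \ref{densityofoperator}, which is absorbed into the final constant $C = C(N_0,w,g,\kappa,d,\delta_0)$ together with $K_5$.
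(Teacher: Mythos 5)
Your proposal is correct and follows essentially the same route as the paper: evaluate the bootstrap bound on $\int_p e^{-i\hbar\eta\cdot p}\widehat P(\frac{k}{2}+p,\frac{k}{2}-p)\,dp$ at $\eta=kt$ and identify the resulting integral with $(2\pi)^d\widehat{\rho_Q}(t,k)$ via the free-flow coordinate shift. Your bookkeeping of the $(2\pi)^{-d}$ prefactor and the remark that $T^*=\infty$ extends the bound to all $t\geq0$ are both consistent with the paper's argument.
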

\begin{proof}Expressing $\widehat{\rho_Q}$ in terms of $\widehat{P}$ using \eqref{rhoQintermsofP} and evaluating the bootstrap bound \eqref{B5} at $\eta=kt$ gives
    \begin{align*}
         \jb{k,kt}^{\sigma_1}\abs{\widehat{\rho_Q}(t,k)}=\jb{k,kt}^{\sigma_1}\abs{\tpd\int_p e^{-i\hbar p\cdot kt}\widehat P(t,\mytextfrac{k}{2}+p,\mytextfrac{k}{2}-p)\,dp}\lesssim \sqrt{K_5}\ep.\tag*{\qedhere} 
    \end{align*} 
\end{proof}

Bootstrap assumptions \eqref{B3} and \eqref{B5} suffice to deduce the scattering statement \eqref{T2}.
\begin{lemma}\label{scatteringlemma}
    Proposition \ref{bootstrapprop} implies the scattering statement \eqref{T2} in Theorem~\ref{maintheorem}.
\end{lemma}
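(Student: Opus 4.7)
The plan is to identify the scattering state $Q_\infty$ with the $t\to\infty$ limit of $P(t) := e^{-i\hbar t\Delta/2}Q(t)e^{i\hbar t\Delta/2}$ via the Duhamel identity \eqref{NLa}. Setting
\[
Q_\infty := Q_{\mathrm{in}} + \int_0^\infty \bigl(P_L(s) + P_{NL}(s)\bigr)\,ds,
\]
one has $P(t) - Q_\infty = -\int_t^\infty (P_L+P_{NL})(s)\,ds$, so both $Q_\infty\in\mathcal H^{\sigma_0}_M$ and the scattering rate \eqref{T2} reduce to the single pointwise-in-$s$ source bound
\[
\|P_L(s)\|_{\mathcal H^{\sigma_0}_M} + \|P_{NL}(s)\|_{\mathcal H^{\sigma_0}_M} \lesssim \ep\,\jb{s}^{-1-d/2},
\]
whose tail integrates exactly to $\lesssim \ep\jb{t}^{-d/2}$.

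For $P_L$, I would pass to $K=k+p$, $\pi=(k-p)/2$ where
\[
\widehat{P_L}\bigl(s,\tfrac{K}{2}+\pi,\tfrac{K}{2}-\pi\bigr) = -i\hbar^{d-1}\widehat w(K)\bigl[g(\hbar\pi - \tfrac{\hbar K}{2}) - g(\hbar\pi + \tfrac{\hbar K}{2})\bigr]e^{i\hbar sK\cdot\pi}\widehat{\rho_Q}(s,K).
\]
Two structural features do the work. First, the finite difference of $g$ vanishes to first order in $\hbar K$, producing an extra factor of $|K|$ that combines with $|\widehat w(K)|\lesssim\jb{K}^{-M-1/2}$ to smooth in $K$ and cancel the $\hbar^{d-1}$ against the $(2\pi\hbar)^{-d/2}$ in the $\mathcal L^2$-normalisation, making the estimate uniform in $\hbar$. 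Second, the density bound \eqref{T1}, $|\widehat{\rho_Q}(s,K)|\lesssim\ep\jb{K,Ks}^{-\sigma_1}$, gives phase-mixing decay in $s$ via the elementary computation $\int_K |K|^2\jb{K,Ks}^{-2\sigma_1}\,dK \lesssim \jb{s}^{-d-2}$, valid since $\sigma_1 \ge d+7 > (d+2)/2$. The higher weights of $\mathcal H^{\sigma_0}_M$ distribute through the formula: $\bm\xi^\alpha$ becomes $(\hbar\pi)^\alpha$ absorbed by weighted regularity of $g$ via $g\in H^{N_0+7/2+\delta_0}_{2+\lceil d/2\rceil}$; $\jb{\bm\nabla_x,\bm\nabla_\xi}^{\sigma_0}$ produces at worst $\jb{K,sK}^{\sigma_0}$ from derivatives landing on the free phase, which is absorbed by $\jb{K,Ks}^{-\sigma_1}$ thanks to $\sigma_1-\sigma_0>d+1$; and $\bm x^\beta$ weights, which in $(K,\pi)$ become $\nabla_K^\beta$, are controlled through the commutator identity
\[
e^{-i\hbar s(\Delta_x - \Delta_y)/2}(x+y) - (x+y)e^{-i\hbar s(\Delta_x - \Delta_y)/2} = -i\hbar s(\nabla_x - \nabla_y)e^{-i\hbar s(\Delta_x - \Delta_y)/2},
\]
which trades $\bm x$ for $\bm x + s\bm\xi$ acting on the untransformed source $(V-V)\gamma_g$; the resulting $s$-growth is again absorbed by the density decay.

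For $P_{NL}$, the same Mean-Value cancellation as in Lemma~\ref{etalemma} turns the $\hbar^{-1}\widehat w(\ell)$ prefactor into $|\ell|\,|\widehat w(\ell)|$ (uniform in $\hbar$). Bounding the $\ell$-integral by
\[
\int_\ell |\ell|\,|\widehat w(\ell)|\,|\widehat{\rho_Q}(s,\ell)|\,d\ell \lesssim \ep\,\jb{s}^{-d}
\]
using \eqref{T1}, $|\widehat w(\ell)|\lesssim\jb{\ell}^{-M-1/2}$, and $\sigma_1>d$, while controlling the translated copies of $\widehat P(s,\cdot,\cdot)$ in $\mathcal H^{\sigma_0}_M$ through \eqref{I3} at the higher regularity $\sigma_3>\sigma_0$ and through \eqref{I5} for the pointwise Wigner-side decay, yields the matching bound $\|P_{NL}(s)\|_{\mathcal H^{\sigma_0}_M} \lesssim \ep\jb{s}^{-1-d/2}$. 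The $\bm x^\beta$ weights are propagated by the same free-flow commutator as above, and the required extra half-power of time decay beyond $\jb{s}^{-d/2}$ is comfortably provided by the combined decay of the density and of the tails of $\widehat w$.

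The main obstacle is the propagation of the $\bm x^\beta$ weights that appear in the target norm $\mathcal H^{\sigma_0}_M$ but are absent from every bootstrap control \eqref{I1}--\eqref{I5}. They must be generated at each step of the Duhamel tail by commuting $\bm x$ through the free flow, which schematically replaces $\bm x$ by $\bm x + s\bm\xi$; the resulting polynomial-in-$s$ growth must then be absorbed by the phase-mixing decay $\jb{K,Ks}^{-\sigma_1}$ from \eqref{T1}. The regularity gaps $\sigma_1-\sigma_0>d+1$ and $N_0-\sigma_1>3d/2+5/2$ imposed in Theorem~\ref{maintheorem} are dictated precisely by this accounting.
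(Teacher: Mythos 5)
Your overall architecture coincides with the paper's: define $Q_\infty$ as the Duhamel limit, reduce \eqref{T2} to integrable-in-$s$ decay of the two source terms, obtain $\norm{P_{\,\mathrm{L}}(s)}\lesssim\ep\jb{s}^{-d/2-1}$ from the pointwise density decay \eqref{I5} combined with the $O(\hbar\abs{K})$ mean-value cancellation and the decay of $\widehat w$, and control $P_{\,\mathrm{NL}}(s)$ by pairing \eqref{I5} on the density factor with \eqref{I3} on the translated copy of $P$. The paper carries out these estimates on the Fourier side of the Wigner transform (via Lemmas \ref{L2normofWigner} and \ref{etalemma}) rather than directly on the kernel in $(K,\pi)$ coordinates, but that is cosmetic, and your claimed rates are consistent with (indeed dominated by) the paper's $\jb{s}^{-d/2-1}$ for the linear term and $\jb{s}^{-d+2\delta}$ for the nonlinear term.

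The genuine problem is exactly the point you single out as the main obstacle, the $\bm{x}^\beta$ weights, and your proposed resolution does not close. Commuting $\bm{x}$ through the free flow replaces it by $\bm{x}+s\bm{\xi}$ on the source; on the Fourier side, $\nabla_K$ hitting the phase $e^{i\hbar sK\cdot\pi}$ produces $\hbar s\pi$, and while the $\hbar\pi$ is absorbed as a moment of $g$, the factor $s$ is left bare, with no accompanying power of $\abs{K}$ that could be traded against $\jb{K,Ks}^{-\sigma_1}$. Hence $\abs{\beta}$ such weights degrade the source bound from $\jb{s}^{-d/2-1}$ to $\jb{s}^{-d/2-1+\abs{\beta}}$, and with $\abs{\beta}=M=\ceils{(d+1)/2}$ in dimension $d=3$ this is $\jb{s}^{-1/2}$, whose tail diverges. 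In addition, the $\nabla_K$ derivatives that land on $\widehat{\rho_Q}(s,K)$ produce the Fourier transform of $x\rho_Q$, which is not controlled by any of \eqref{I1}--\eqref{I5}. To be fair, the paper's own proof is silent on this: its closing display bounds $\norm{P(t)-Q_\infty}_{\mathcal H^{\sigma_0}_M}$ only by the $\bm{\xi}^\alpha$-weighted, $\jb{\bm{\nabla}_x,\bm{\nabla}_\xi}^{\sigma_0}$-differentiated norms of $P_{\,\mathrm{L}}$ and $P_{\,\mathrm{NL}}$, omitting the $\bm{x}^\beta$ weights altogether. So your argument matches the paper on everything the paper actually writes down, but the additional step you introduce to cover the full $\mathcal H^{\sigma_0}_M$ norm contains a quantitative gap that would need a different mechanism (for instance, propagating an $\bm{x}$-weighted bootstrap quantity) rather than absorption by the density decay alone.
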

\begin{proof}Let $\aal\leq M$. To prove convergence of $P(t)$ in $\mathcal{H}^{\sigma_0}_M$, the Duhamel formula \eqref{Na} is analysed in Fourier--Wigner variables. The aim is to show that both the linear and nonlinear contributions decay sufficiently fast to be absolutely integrable in time in the $\mathcal H^{\sigma_0}_M$ norm, yielding the existence of an asymptotic state $Q_\infty := \lim_{t \to \infty} P(t)$.

For the linear term, apply Lemma~\ref{L2normofWigner} to obtain
\begin{align*}
    \norm{\jb{\bm{\nabla}_x,\bm{\nabla}_{\xi}}^{\sigma_0}(\bm{\xi}^\alpha {\mathrm{L}_P}(t))}^2_{\mathcal L^2} = (2\pi)^{d}  \norm{\jb{k,\eta}^{\sigma_0} D^\alpha_\eta(\widehat{W[{\mathrm{L}_P}]}(t))}^2_{L^2_{k,\eta}}.
\end{align*}
Lemma~\ref{etalemma} is used to extract the highest-order $D^\alpha_\eta$ derivative falling on $\widehat{W[{\mathrm{L}_P}]}$. Using assumption \eqref{wassumption} on $w$, which gives $\jb{\hbar k}^{\aal - 1} \abs{\widehat{w}(k)} \lesssim 1$, and the inequality $\jb{k,\eta}^{\sigma_0} \lesssim \jb{k,kt}^{\sigma_0} \jb{\eta - kt}^{\sigma_0}$, one obtains
\begin{align*}
    &\norm{\jb{k,\eta}^{\sigma_0} D^\alpha_\eta(\widehat{W[{\mathrm{L}_P}]}(t))}^2_{L^2_{k,\eta}} \\
    &\lesssim \int_{k,\eta} \abs{\widehat{w}(k)}^2 \ak^2 \jb{k,kt}^{2\sigma_0} \abs{\widehat{\rho_Q}(t,k)}^2 \jb{\eta - kt}^{2\sigma_0} \abs{\eta - kt}^2 \abs{D^\alpha_\eta \widehat{g}(\eta - kt)}^2 \,dk\,d\eta \\
    &\hspace{0.5cm} + \sum_{\abs{\beta} \leq \aal - 1} \int_{k,\eta} \jb{\hbar k}^{2\aal - 2} \abs{\widehat{w}(k)}^2 \ak^2 \jb{k,kt}^{2\sigma_0} \abs{\widehat{\rho_Q}(t,k)}^2 \jb{\eta - kt}^{2\sigma_0} \abs{D^\beta_\eta \widehat{g}(\eta - kt)}^2 \,dk\,d\eta \\
    &\lesssim \norm{g}_{H^{\sigma_0 + 1}_M}^2 \int_k \abs{k}^2 \jb{k,kt}^{2\sigma_0} \abs{\widehat{\rho_Q}(t,k)}^2 \,dk \\
    &\lesssim K_5 \ep^2 \norm{g}_{H^{\sigma_0 + 1}_M}^2 \int_k \frac{\abs{k}^2}{\jb{k,kt}^{2\sigma_1 - 2\sigma_0}} \,dk,
\end{align*}
where the final bound follows from the bootstrap estimate \eqref{B5} (via Lemma \ref{rhodecayfromB5}). Provided $2\sigma_1-2\sigma_0>d+2$, this integral is uniformly bounded in time after the change of variables $k\mapsto k/t$, leading to
\begin{align*}
     \norm{\jb{\bm{\nabla}_x,\bm{\nabla}_{\xi}}^{\sigma_0}(\bm{\xi}^\alpha {\mathrm{L}_P}(t))}_{\mathcal L^2}\lesssim\frac{\sqrt{K_5}\ep}{\jb{t}^{d/2+1}}.
\end{align*}

The nonlinear term is handled similarly. Using the inequalities $\jb{k,\eta}^{\sigma_0}\lesssim\jb{\ell ,\ell t}^{\sigma_0}\jb{k-\ell ,\eta-\ell t}^{\sigma_0}$ and $\abs{\eta-kt}\lesssim\jb{t}\jb{k-\ell ,\eta-\ell t}$, one obtains
\begin{align*}
&\norm{\jb{k,\eta}^{\sigma_0}D^\alpha_\eta(\widehat {W[{\mathrm{N}_P}]}(t))}^2_{L^2_{k,\eta}}\\
&\lesssim\int_{k,\eta}\br{\jb{k,\eta}^{\sigma_0}\int_\ell \abs{\ell }\abs{\widehat w(\ell )}\abs{\widehat{\rho_Q}(t,\ell )}\abs{\eta-kt}\abs{D_\eta^\alpha \widehat{W[P]}(t,k-\ell ,\eta-\ell t)}\,d\ell }^2\,dkd\eta\\
    &\hspace{0.5cm}+\sum_{\abs{\beta}\leq \aal-1}\int_{k,\eta}\br{\jb{k,\eta}^{\sigma_0}\int_\ell \jb{\hbar\ell }^{\aal-1}\abs{\ell }\abs{\widehat w(\ell )}\abs{\widehat{\rho_Q}(t,\ell )}\abs{D_\eta^\beta\widehat{W[P]}(t,k-\ell ,\eta-\ell t)}\,d\ell }^2\,dkd\eta\\
    &\lesssim K_5\ep^2\jb{t}^2\sum_{\abs{\beta}\leq M}\int_{k,\eta}\br{\int_\ell \frac{\abs{\ell }}{\jb{\ell ,\ell t}^{\sigma_1-\sigma_0}}\jb{k-\ell ,\eta-\ell t}^{\sigma_0+1}\abs{D_\eta^\beta\widehat{W[P]}(t,k-\ell ,\eta-\ell t)}\,d\ell }^2\,dkd\eta
\end{align*}
In order to apply bootstrap assumption \eqref{B3}, Cauchy--Schwarz in $\ell$ is applied with weight $\abs{k-\ell}^{\delta}$, for $0<\delta\ll1/2$ as introduced in Section~\ref{bootstrapassumptionssection}, yielding\begin{align*}
    \int_\ell &\frac{\abs{\ell }}{\jb{\ell ,\ell t}^{\sigma_1-\sigma_0}}\jb{k-\ell ,\eta-\ell t}^{\sigma_0+1}\abs{D_\eta^\beta\widehat{W[P]}(t,k-\ell ,\eta-\ell t)}\,d\ell \\
    &\lesssim \sbr{\int_\ell \frac{1}{\abs{k-\ell }^{2\delta}}\frac{\abs{\ell }}{\jb{\ell ,\ell t}^{\sigma_1-\sigma_0}}\,d\ell }\int_\ell \frac{\abs{\ell }}{\jb{\ell ,\ell t}^{\sigma_1-\sigma_0}}\abs{k-\ell }^{2\delta}\abs{\reallywidehat{W[\jb{\bm{\nabla}_x,\bm{\nabla}_\xi}^{\sigma_0+1}(\bm{\xi}^\beta P)]}(t,k-\ell ,\eta-\ell t)}^2\,d\ell.
\end{align*}
Observe that
\begin{align*}
    \sup_k\int_\ell \frac{1}{\abs{k-\ell }^{2\delta}}\frac{\abs{\ell }}{\jb{\ell}^{\sigma_1-\sigma_0}}\,d\ell \lesssim 1,
\end{align*}provided $\sigma_1-\sigma_0>d+1$. This follows by splitting into the regions $\abs{k-\ell}\geq 1$, where $\abs{k-\ell}^{-2\delta}\leq 1$, and $\abs{k-\ell}<1$, where the singularity is integrable. Applying the change of variables $\ell\mapsto \ell/t$, it follows that 
\begin{align*}
    \sup_k\int_\ell \frac{1}{\abs{k-\ell }^{2\delta}}\frac{\abs{\ell }}{\jb{t\ell}^{\sigma_1-\sigma_0}}\,d\ell=\frac{1}{t^{d+1-2\delta}}\sup_k\int_\ell \frac{1}{\abs{kt-\ell}^{2\delta}}\frac{\abs{\ell }}{\jb{\ell}^{\sigma_1-\sigma_0}}\,d\ell\lesssim\frac{1}{t^{d+1-2\delta}}.
\end{align*}
Combining these estimates gives
\begin{align*}
   &\norm{\jb{k,\eta}^{\sigma_0}D^\alpha_\eta(\widehat {W[{\mathrm{N}_P}]}(t))}^2_{L^2_{k,\eta}}\\
   &\lesssim K_5\ep^2\jb{t}^2\frac{1}{\jb{t}^{d+1+2\delta}}\int_\ell \frac{\abs{\ell }}{\jb{\ell ,\ell t}^{\sigma_1-\sigma_0}}\,d\ell\sum_{\abs{\beta}\leq {M}}\norm{\reallywidehat{W[\abs{\bm{\nabla}_x}^\delta\jb{\bm{\nabla}_x,\bm{\nabla}_\xi}^{\sigma_0+1}(\bm{\xi}^\beta P)]}(t,k ,\eta)}^2_{L^2_{k,\eta}}\\
   &\lesssim K_5\ep^2\jb{t}^2\frac{1}{\jb{t}^{d+1+2\delta}}\frac{1}{\jb{t}^{d+1}}K_3\ep^2,
\end{align*}where the final step uses bootstrap assumption \eqref{B3}, together with Plancherel and Lemma~\ref{L2normofWigner}. Hence, for the nonlinear term, \begin{align*}
    \norm{\jb{\bm{\nabla}_x,\bm{\nabla}_{\xi}}^{\sigma_0}(\bm{\xi}^\alpha {\mathrm{N}_P}(t))}_{\mathcal L^2}\lesssim\frac{\sqrt{K_5K_3}\ep^2}{\jb{t}^{d-2\delta}}.
\end{align*}

Consequently, define
\begin{align*}
    Q_\infty:=Q_{\mathrm{in}}-\int_0^\infty \mathrm{L}_P(s)\,ds-\int_0^\infty \mathrm{N}_P(s)\,ds\in \mathcal H^{\sigma_0}_M.
\end{align*}
Inequality \eqref{T2} then follows from
\begin{align*}
    \norm{P(t)-Q_\infty}_{\mathcal H^{\sigma_0}_M}&\lesssim\int_{t}^\infty \norm{\jb{\bm{\nabla}_x,\bm{\nabla}_{\xi}}^{\sigma_0}(\bm{\xi}^\alpha {\mathrm{L}_P}(s))}_{\mathcal L^2_{x,y}}\,ds+\int_{t}^\infty \norm{\jb{\bm{\nabla}_x,\bm{\nabla}_{\xi}}^{\sigma_0}(\bm{\xi}^\alpha {\mathrm{N}_P}(s))}_{\mathcal L^2_{x,y}}\,ds\\
    &\lesssim\sqrt{K_5}\ep\int_{t}^\infty\frac{1}{\jb{s}^{d/2+1}}\,ds+\sqrt{K_5 K_3}\ep\int_t^\infty \frac{1}{\jb{s}^{d-2\delta}}\,ds\\
    &\lesssim\frac{\ep}{\jb{t}^{d/2}},
\end{align*}as the contribution from the nonlinear term satisfies
\[
\int_t^\infty \jb{s}^{-(d - 2\delta)}\,ds \lesssim \jb{t}^{-d + 2\delta+1} \leq \jb{t}^{-d/2}
\]for $d\geq 3$ and $0<\delta\ll 1/2$, ensuring the overall decay rate $\jb{t}^{-d/2}$.
\end{proof}

This completes the proof of the scattering statement \eqref{T2}, and hence of Theorem~\ref{maintheorem}, subject to the bootstrap estimates established in the following sections.

\section{Linearised phase mixing}\label{linearisedsection}
The \textit{linearised Hartree equation} is obtained by removing the quadratic-in-$Q$ term $\sbr{w*\rho_Q,Q}$ from \eqref{NH}, which is expected to be small:\begin{align}
    \begin{mycases}\label{LH}
    i\hbar \p_t{Q_{\mathrm{L}}}=\sbr{-{\frac{\hbar^2}{2}}\Delta,{Q_{\mathrm{L}}}}+\sbr{w*\rho_{{Q_{\mathrm{L}}}},\gamma_g},\\
    {Q_{\mathrm{L}}}(0)=Q_{\mathrm{in}}.
\end{mycases}
\end{align}

\subsection{Fourier--Laplace transform}
Writing \eqref{LH} in kernel form gives
\begin{align*}
    i\hbar\p_t{Q_{\mathrm{L}}}(t,x,y)=-{\frac{\hbar^2}{2}}\br{\Delta_x-\Delta_y}{Q_{\mathrm{L}}}(t,x,y)+\tpd\br{w*\rho_{Q_{\mathrm{L}}}(t,x)-w*\rho_{Q_{\mathrm{L}}}(t,y)}\widehat g\br{\frac{y-x}{\hbar}}.
\end{align*}
By Duhamel's formula, the solution of the linearised Hartree equation can be written as\begin{equation}\begin{split}
    {Q_{\mathrm{L}}}(t,x,y)&=e^{i{\frac{\hbar}{2}} (\Delta_x-\Delta_y)t}Q_{\mathrm{in}}(x,y)\\
    &\hspace{0.5cm}-\frac{i}{(2\pi)^d }
    \frac{1}{\hbar}\int_0^te^{i{\frac{\hbar}{2}}(\Delta_x-\Delta_y)(t-s)}\sbr{w*\rho_{Q_{\mathrm{L}}}(s,x)-w*\rho_{Q_{\mathrm{L}}}(s,y)}\widehat g\br{\frac{y-x}{\hbar}}\,ds.\end{split}\label{linearHartreeimplicit}
\end{equation}
Taking the Fourier transform of \eqref{linearHartreeimplicit} yields \begin{equation}\begin{split}\label{Fouriertransform}
    \widehat {Q_{\mathrm{L}}}(t,k,p)&=e^{-i{\frac{\hbar}{2}}(\ak^2-\ap^2)t}\widehat{Q_{\mathrm{in}}}(k,p)\\
    &\hspace{0.5cm}-i\hbar^{d-1}\widehat w(k+p)\sbr{g(-\hbar p)-g(\hbar k)}\int_0^te^{-i{\frac{\hbar}{2}}(\ak^2-\ap^2)(t-s)}\widehat{\rho_{Q_{\mathrm{L}}}}(s,k+p)\,ds.\end{split}
\end{equation}
Using Lemma~\ref{densityofoperator}, the corresponding expression for the density is
\begin{align*}
\widehat{\rho_{Q_{\mathrm{L}}}}(t,k)&=\frac{1}{(2\pi)^d}\intRdp e^{-i\hbar  kt\cdot p}\widehat{Q_{\mathrm{in}}}\br{\mytextfrac{k}{2}+p,\mytextfrac{k}{2}-p}\,dp\\
    &\hspace{0.6cm}-\frac{i\hbar^{d-1}}{(2\pi)^d}\widehat w(k)\intRdp\int_0^t e^{-i\hbar (t-s)k\cdot p}\sbr{g\br{\hbar (p-\mytextfrac{k}{2})}-g\br{\hbar (p+\mytextfrac{k}{2})}}\widehat{\rho_{Q_{\mathrm{L}}}}(s,k)\,dpds.
\end{align*}This is a Volterra equation of the form\begin{align}\label{linearHartreeFourierimplicit}
      \widehat{\rho_{Q_{\mathrm{L}}}}(t,k)&=\widehat{\rho_{{Q_{\mathrm{FH}}}}}(t,k)-\widehat {\mathscr L}*_{t}\widehat{\rho_{Q_{\mathrm{L}}}}(t,k),
\end{align}where $\rho_{{Q_{\mathrm{FH}}}}(t,k)$ is the density of the free Hartree equation \eqref{FH}, and the kernel $\widehat {\mathscr L}$ is given by\begin{align*}\begin{split}
    \widehat {\mathscr L}(t,k)&:=\tpd i\hbar^{d-1}\widehat w(k)\intRdp e^{-i\hbar kt\cdot p}\sbr{g\br{\hbar (p-\mytextfrac{k}{2})}-g\br{\hbar (p+\mytextfrac{k}{2})}}\,dp\\
    &=\tpd i\hbar^{d-1}\widehat w(k)\mathcal F_p\sbr{g\br{\hbar (p-\mytextfrac{k}{2})}-g\br{\hbar (p+\mytextfrac{k}{2})}}(\hbar kt)\\
    &=\tpd \frac{i}{\hbar}\widehat w(k)\br{e^{-i\frac{\hbar}{2} \ak^2 t}-e^{i\frac{\hbar}{2} \ak^2 t}}\widehat g\br{kt}\\
    &=\tpd\frac{2}{\hbar}\widehat w(k)\sin(\mytextfrac{1}{2} \hbar t\ak^2)\widehat g\br{kt}.\end{split}
\end{align*}Observe that $w,g\in L^1(\R^d)$, then the zero mode vanishes: $\widehat {\mathscr L}(t,k=0)=0$ for all $t\geq0$.

In order to find an explicit equation for $\widehat{\rho_{Q_{\mathrm{L}}}}$, the Laplace transform in time is applied to \eqref{linearHartreeFourierimplicit}. Denoting $\widetilde f(\lam,k):=\int_0^\infty e^{-\lam t}\widehat f(t,k)\,dt$, the transformed equation reads\begin{align}\label{beforedivision}
    \br{1+\widetilde {\mathscr L}(\lam,k)}\widetilde{\rho_{Q_{\mathrm{L}}}}(\lam,k)=\widetilde{\rho_{{Q_{\mathrm{FH}}}}}(\lam,k),
\end{align}where\begin{align}\label{firstL}
    \widetilde {\mathscr L}(\lam,k)&=\tpd\frac{2}{\hbar}\widehat w(k)\int_0^\infty e^{-\lam t}\mathrm{sin}(\hbar t\ak^2)\widehat g(kt)\,dt.\end{align}

\subsection{Uniform Penrose condition}\label{Penroseconditionsection}
This section formulates conditions on the interaction kernel \( w \) and the steady state \( g \) under which the uniform Penrose condition \eqref{Penrose} holds.

The following lemma shows that, in the space $H^\sigma_M$, moments and derivatives can be reordered freely, allowing them to be applied in whichever order is most convenient.
\begin{lemma}[Ordering of moments]\label{lemmaHMnorm}
Let $\sigma\geq0$, let $M\in\mathbb N$, and $g\in H^\sigma_M(\R^d)$. Then the norm\begin{align*}
    \norm{g}_{H^\sigma_M}:=\sum_{\abs{\alpha}\leq M}\norm{\jb{\nabla_{k}}^\sigma (k^\alpha g)}_{L^2}\sim\sum_{\abs{\alpha}\leq M}\norm{k^\alpha\jb{\nabla_{k}}^\sigma g}_{L^2}.
\end{align*}
\end{lemma}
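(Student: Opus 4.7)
The plan is to pass to the Fourier side, where both norms become weighted Sobolev-type expressions for $\widehat g$, and then the equivalence reduces to a routine Leibniz rule argument.

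Using the paper's Fourier conventions, $\widehat{\nabla_k g}(v)=iv\widehat g(v)$ and $\widehat{k_j g}(v)=i\partial_{v_j}\widehat g(v)$, so that $\widehat{\jb{\nabla_k}^\sigma g}(v)=\jb{v}^\sigma\widehat g(v)$ and $\widehat{k^\alpha g}(v)=i^{\aal}\partial_v^\alpha\widehat g(v)$. Plancherel then yields, up to a fixed $(2\pi)^{d/2}$ factor,
\[
\norm{\jb{\nabla_k}^\sigma(k^\alpha g)}_{L^2_k} \sim \norm{\jb{v}^\sigma\partial_v^\alpha\widehat g}_{L^2_v}, \qquad \norm{k^\alpha\jb{\nabla_k}^\sigma g}_{L^2_k} \sim \norm{\partial_v^\alpha\br{\jb{v}^\sigma\widehat g}}_{L^2_v},
\]
so the lemma reduces to showing $\sum_{\aal\leq M}\norm{\jb{v}^\sigma\partial_v^\alpha\widehat g}_{L^2}\sim\sum_{\aal\leq M}\norm{\partial_v^\alpha(\jb{v}^\sigma\widehat g)}_{L^2}$, with implicit constants depending only on $\sigma$, $M$ and $d$.

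The basic ingredient is the standard pointwise bound $\abs{\partial_v^\beta\jb{v}^\sigma}\lesssim\jb{v}^{\sigma-|\beta|}\leq\jb{v}^\sigma$, valid for any real $\sigma\geq 0$ by induction on $|\beta|$ applied to $(1+\abs{v}^2)^{\sigma/2}$, with the final inequality using $\jb{v}\geq 1$. The direction $\norm{\partial_v^\alpha(\jb{v}^\sigma\widehat g)}_{L^2}\lesssim\sum_{|\beta|\leq\aal}\norm{\jb{v}^\sigma\partial_v^\beta\widehat g}_{L^2}$ is then immediate from the Leibniz expansion
\[
\partial_v^\alpha\br{\jb{v}^\sigma\widehat g} = \sum_{\beta\leq\alpha}\binom{\alpha}{\beta}(\partial_v^\beta\jb{v}^\sigma)\,\partial_v^{\alpha-\beta}\widehat g,
\]
since each summand is pointwise dominated by $\jb{v}^\sigma\abs{\partial_v^{\alpha-\beta}\widehat g}$.

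For the reverse direction I would induct on $M$. The base case $M=0$ is just an equality. For the step, solving the Leibniz identity for the top-order derivative gives
\[
\jb{v}^\sigma\partial_v^\alpha\widehat g = \partial_v^\alpha\br{\jb{v}^\sigma\widehat g} - \sum_{\beta<\alpha}\binom{\alpha}{\beta}(\partial_v^{\alpha-\beta}\jb{v}^\sigma)\,\partial_v^\beta\widehat g,
\]
so that for $\aal\leq M$ the first term is directly part of the right-hand sum at order $M$, while each error term is pointwise bounded by $\jb{v}^\sigma\abs{\partial_v^\beta\widehat g}$ with $|\beta|<M$ and is therefore controlled, via the inductive hypothesis, by the right-hand sum at order $M-1$. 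This closes the induction. There is no substantive obstacle here: the whole argument is a standard commutator estimate, and the only mild point of care is the absorption $\jb{v}^{\sigma-|\beta|}\leq\jb{v}^\sigma$ at the pointwise step.
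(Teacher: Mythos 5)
Your proof is correct. Note that the paper states Lemma \ref{lemmaHMnorm} without proof (it is invoked as a standard fact), so there is no argument of the author's to compare against; your route --- Plancherel to reduce to the equivalence of $\sum_{\aal\le M}\norm{\jb{v}^\sigma\partial_v^\alpha\widehat g}_{L^2}$ and $\sum_{\aal\le M}\norm{\partial_v^\alpha(\jb{v}^\sigma\widehat g)}_{L^2}$, then the symbol bound $\abs{\partial_v^\beta\jb{v}^\sigma}\lesssim\jb{v}^{\sigma-\abs{\beta}}$ with Leibniz in one direction and induction on $M$ in the other --- is exactly the standard argument one would supply, and it correctly handles non-integer $\sigma$ by working with the Fourier multiplier. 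No gaps.
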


The Sobolev $L^2$ trace lemma is also recalled. 
 \begin{lemma}[$L^2$ trace \cite{BedrossianMasmoudiMouhot2018}]\label{L2trace}
        Let $g\in H^s(\R^d)$ with $s>\frac{d-1}{2}$ and $C\subset\R^d$ be an arbitrary straight line. Then, there holds\begin{align*}
            \norm{g}_{L^2(C)}\lesssim_{s,d}\norm{g}_{H^s(\R^d)}.
        \end{align*}
    \end{lemma}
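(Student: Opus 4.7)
The plan is to prove the $L^2$ trace inequality by a standard Fourier-analytic argument, reducing first to the case of a coordinate axis and then applying Cauchy--Schwarz on the Fourier side with a dimension-counting integrability check.

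First, I would reduce to the model case. Since both the $H^s(\R^d)$-norm and the $L^2(C)$-norm are invariant under translations and rotations of $\R^d$, after replacing $g$ by a suitable translate and rotate, one may assume $C=\{(x_1,0,\ldots,0):x_1\in\R\}$. Writing $h(x_1):=g(x_1,0,\ldots,0)$ and using the inverse Fourier transform formula for $g$, one gets
\begin{align*}
\widehat h(\xi_1)=(2\pi)^{1-d}\int_{\R^{d-1}}\widehat g(\xi_1,\xi')\,d\xi'.
\end{align*}
Thus the problem reduces to bounding a partial integral of $\widehat g$ in terms of an $H^s$ norm.

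Next, I would apply Cauchy--Schwarz in the $\xi'$-variable with weights $\jb{\xi_1,\xi'}^s$ and $\jb{\xi_1,\xi'}^{-s}$:
\begin{align*}
|\widehat h(\xi_1)|^2\lesssim\left(\int_{\R^{d-1}}\jb{\xi_1,\xi'}^{-2s}\,d\xi'\right)\left(\int_{\R^{d-1}}\jb{\xi_1,\xi'}^{2s}|\widehat g(\xi_1,\xi')|^2\,d\xi'\right).
\end{align*}
The weight factor is then evaluated by the change of variables $\xi'=\jb{\xi_1}u$, which gives
\begin{align*}
\int_{\R^{d-1}}\jb{\xi_1,\xi'}^{-2s}\,d\xi'=\jb{\xi_1}^{d-1-2s}\int_{\R^{d-1}}(1+|u|^2)^{-s}\,du.
\end{align*}
The $u$-integral converges precisely because $2s>d-1$, and since the same hypothesis gives $d-1-2s<0$, the prefactor $\jb{\xi_1}^{d-1-2s}$ is uniformly bounded by $1$. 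Integrating the resulting bound for $|\widehat h(\xi_1)|^2$ in $\xi_1$ and applying Plancherel on both sides completes the proof.

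There is no real obstacle here; the only point that requires a little care is the scaling step that shows the $\xi'$-weight integral contributes a factor bounded uniformly in $\xi_1$, which is exactly where the sharp hypothesis $s>(d-1)/2$ is used. Everything else is bookkeeping of constants and a reduction by isometric invariance.
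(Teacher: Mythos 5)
The paper does not prove this lemma at all --- it is quoted directly from \cite{BedrossianMasmoudiMouhot2018} as a known result --- so there is no in-paper argument to compare against. Your proof is the standard Fourier-analytic one and it is correct: the reduction to a coordinate axis by rigid-motion invariance, the identity $\widehat h(\xi_1)=(2\pi)^{1-d}\int_{\R^{d-1}}\widehat g(\xi_1,\xi')\,d\xi'$, the Cauchy--Schwarz split with weights $\jb{\xi_1,\xi'}^{\pm s}$, and the scaling $\xi'=\jb{\xi_1}u$ showing the weight integral is $\jb{\xi_1}^{d-1-2s}\int(1+|u|^2)^{-s}\,du\lesssim 1$ exactly when $s>(d-1)/2$, are all as they should be. The only point worth making explicit for full rigour is that the pointwise restriction $x_1\mapsto g(x_1,0,\dots,0)$ is not a priori defined for a general $g\in H^s$; one should first establish the estimate for Schwartz functions (where every step is justified, and where the slice integral $\int_{\R^{d-1}}|\widehat g(\xi_1,\xi')|\,d\xi'$ is finite) and then define the trace on $C$ as the unique continuous extension by density. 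This is routine and does not affect the substance of your argument.
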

These lemmas imply the following estimate for directional integrals of derivatives of \( \widehat{g} \):
\begin{lemma}\label{gnorm}
Let \( n \geq 0 \), and let \( \alpha \) be a multi-index. Then
\begin{align*}
        \int_0^\infty\jb{s}^{n}\abs{\p^\alpha_k\widehat g\br{\mytextfrac{k}{\abs{k}}s}}\,ds\lesssim_{d,
        \delta_0,n,\aal}\norm{g}_{H^{n+{{1/2}}+\delta_0}_{\aal+\ceils{d/2}}}.\end{align*}
\end{lemma}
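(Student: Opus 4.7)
My plan is to reduce the estimate to the case $\alpha=0$ by trading the $k$-derivatives for velocity moments on $g$, then to apply the $L^2$ trace lemma (Lemma \ref{L2trace}) on the line through the origin in direction $\omega:=k/\ak\in S^{d-1}$.

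First, I would use the Fourier identity $\p^\alpha_k\widehat g(\ell)=(-i)^{\aal}\widehat{v^\alpha g}(\ell)$ to rewrite the integrand as $|\widehat{v^\alpha g}(\omega s)|$. Writing $\tilde g:=v^\alpha g$, the task is reduced to showing
\[
\int_0^\infty\jb{s}^{n}|\widehat{\tilde g}(\omega s)|\,ds\lesssim \|\tilde g\|_{H^{n+\frac12+\delta_0}_{\ceils{d/2}}},
\]
uniformly in $\omega\in S^{d-1}$, since then the inequality $\|v^\alpha g\|_{H^{\sigma}_{M}}\leq \|g\|_{H^{\sigma}_{M+\aal}}$ (a trivial consequence of Lemma \ref{lemmaHMnorm} together with $v^\beta v^\alpha=v^{\alpha+\beta}$) closes the argument.

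Next, I would apply Cauchy--Schwarz in $s$ with the weight $\jb{s}^{1/2+\delta_0}$ to separate the $L^1$-in-$s$ from an $L^2$-in-$s$ quantity:
\[
\int_0^\infty \jb{s}^{n}|\widehat{\tilde g}(\omega s)|\,ds\leq\br{\int_0^\infty\jb{s}^{-1-2\delta_0}\,ds}^{\!\!1/2}\br{\int_0^\infty \jb{s}^{2n+1+2\delta_0}|\widehat{\tilde g}(\omega s)|^2\,ds}^{\!\!1/2},
\]
where the first factor is a constant depending only on $\delta_0$. Setting $h(\ell):=\jb{\ell}^{n+1/2+\delta_0}\widehat{\tilde g}(\ell)$ and noting that $\jb{\omega s}=\jb{s}$ on the line $C:=\{\omega s:s\in\R\}$, the second factor is controlled by $\|h\|_{L^2(C)}$. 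Lemma \ref{L2trace} (with $s=\ceils{d/2}>(d-1)/2$) then yields $\|h\|_{L^2(C)}\lesssim_d \|h\|_{H^{\ceils{d/2}}(\R^d)}$, uniformly in the direction of $C$.

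Finally, by Plancherel and Lemma \ref{lemmaHMnorm}, the Fourier correspondence between weighted Sobolev norms gives
\[
\|h\|_{H^{\ceils{d/2}}_{\ell}}\sim \|\tilde g\|_{H^{n+\frac12+\delta_0}_{\ceils{d/2}}}\leq \|g\|_{H^{n+\frac12+\delta_0}_{\aal+\ceils{d/2}}},
\]
which completes the proof. The argument is essentially bookkeeping; there is no real obstacle beyond ensuring the powers line up correctly. The only mildly delicate point is choosing the $\jb{s}^{1/2+\delta_0}$ weight to gain an arbitrarily small amount of integrability in $s$, which is what forces the $+\frac12+\delta_0$ regularity on the right-hand side.
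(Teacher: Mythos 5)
Your proof is correct and follows essentially the same route as the paper's: Cauchy--Schwarz in $s$ with the $\jb{s}^{1/2+\delta_0}$ weight, the $L^2$ trace lemma on the line through the origin, and then Plancherel together with Lemma \ref{lemmaHMnorm} to reorder moments and derivatives. The only cosmetic difference is that you trade $\p_k^\alpha$ for the moment $v^\alpha$ at the outset rather than at the end, and you take the trace exponent to be $\ceils{d/2}$ instead of $\frac{d-1}{2}+\delta_0$; both choices are admissible and land on the same right-hand side.
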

\begin{proof}
    Applying Cauchy--Schwarz followed by Lemma~\ref{L2trace},\begin{align*}
         \int_0^\infty\jb{s}^{n}\abs{\p^\alpha_k\widehat g\br{\mytextfrac{k}{\abs{k}}s}}\,ds&\lesssim_{\delta_0}\br{\int_0^\infty\jb{s}^{2n+1+2\delta_0}\abs{\p^\alpha_k\widehat g\br{\mytextfrac{k}{\abs{k}}s}}^2\,ds}^{{1/2}}\\
         &\lesssim_{\delta_0}\br{\intRdk\abs{\jb{\nabla_k}^{\frac{d-1}{2}+\delta_0}\br{\jb{k}^{n+{{1/2}}+\delta_0}\p^\alpha_k\widehat g(k)}}^2\,dk}^{{1/2}}.
    \end{align*}
    Then, by Plancherel and Lemma~\ref{lemmaHMnorm}:\begin{align*}
        \int_0^\infty\jb{s}^{n}\abs{\p^\alpha_k\widehat g\br{\mytextfrac{k}{\abs{k}}s}}\,ds&\lesssim_d\norm{\jb{x}^{\frac{d-1}{2}+\delta_0}\jb{\nabla_x}^{n+{{1/2}}+\delta_0}\br{x^\alpha g(x)}}_{L^2}\\
        &\lesssim\sum_{\abs{\beta}\leq\ceils{d/2}}\norm{x^\beta\jb{\nabla_x}^{n+{{1/2}}+\delta_0}\br{x^\alpha g(x)}}_{L^2}\\
&\lesssim_n\sum_{\abs{\beta}\leq\ceils{d/2}}\norm{\jb{\nabla_x}^{n+{{1/2}}+\delta_0}\br{x^{\beta+\alpha} g(x)}}_{L^2}\\
        &\lesssim_\alpha\norm{g}_{H_{\aal+\ceils{d/2}}^{n+{{1/2}}+\delta_0}}.\tag*{\qedhere}
    \end{align*}
\end{proof}

With Lemma~\ref{gnorm} in hand, estimates on the linearised operator are now derived. The notation \( \widetilde{\mathscr L}(\lambda,k;\hbar) \) is used to make the dependence on \( \hbar \in (0,1] \) explicit, facilitating the analysis of uniformity in the semiclassical limit. For \( k \neq 0 \), a change of variables in \eqref{firstL} yields
\begin{align}
    \widetilde {\mathscr L}(\lam,k;\hbar)
    = \tpd\frac{2}{\hbar\ak} \widehat w(k) \int_0^\infty e^{-\lam \mytextfrac{s}{\ak}} \sin(\mytextfrac{\hbar}{2} \ak s) \widehat g\br{\mytextfrac{k}{\ak}s} \, ds. \label{Lingoodcoordinates}
\end{align}
To compare with the classical setting, consider the limit
\[
\widetilde{\mathscr{L}}(\lambda,k;0) := \lim_{\hbar \to 0} \widetilde{\mathscr{L}}(\lambda,k;\hbar),
\]
which evaluates to
\[
\widetilde {\mathscr{L}}(\lam,k;0)
= \tpd \widehat w(k) \int_0^\infty e^{-\lam \frac{s}{\ak}} s \widehat g\br{\mytextfrac{k}{\ak}s} \, ds.
\]
This coincides with the linearised operator that appears in the classical theory \cite{BedrossianMasmoudiMouhot2018}.

To study the behaviour of \( \widetilde{\mathscr L}(\lam,k;\hbar) \) along $\Re\lam=0$, the Plemelj formula is applied. For $\hbar\in(0,1]$ and \( \lam = i\tau \), it gives
    \begin{equation}\label{plemelj}
\begin{aligned}
 \widetilde {\mathscr L}(i\tau,k;\hbar)&=\frac{\widehat w(k)}{(2\pi)^d}\frac{1}{\hbar\ak}\Bigg(\mathrm{p.v.}\int_{p_1\in\R}\Bigg[\frac{g_k(p_1)}{p_1-\frac{\tau}{\ak}+\frac{\hbar \ak}{2}}-\frac{g_k(p_1)}{p_1-\frac{\tau}{\ak}-\frac{\hbar \ak}{2}}\Bigg]\,dp_1\\
&\hspace{4.25cm}+i\pi\sbr{g_k\br{\mytextfrac{\tau}{\ak}+\mytextfrac{\hbar \ak}{2}}-g_k\br{\mytextfrac{\tau}{\ak}-\mytextfrac{\hbar \ak}{2}}}\Bigg),
\end{aligned}
\end{equation}where \( g_k \) denotes the marginal of \( g \) along the direction \( k \), defined by
\[
g_k(u) := \int_{\{k\}^\perp} g\left( \mytextfrac{k}{\abs{k}} u + w \right) dw.
\]Similarly, applying the Plemelj formula to \( \widetilde {\mathscr L}(\lam,k;0) \) yields
\begin{align*}
    \widetilde {\mathscr{L}}(i\tau,k;0)=\frac{\widehat w(k)}{(2\pi)^d}\br{-\,\mathrm{p.v.}\int_{p_1\in\R}\frac{g_k'(p_1)}{p_1-\frac{\tau}{\ak}}\,dp_1+i\pi g_k'(\mytextfrac{\tau}{\ak})}.
\end{align*}

Using Lemma~\ref{gnorm}, uniform bounds and decay properties of \( \widetilde{\mathscr L}(\lam,k;\hbar) \) are now established for all \( \hbar \in [0,1] \). 
\begin{lemma}\label{boundedness and decay lemma}
Suppose $w\in L^1(\R^d)$ and $g\in H^{3/2+\delta_0}_{\ceils{d/2}}(\R^d)$. Then: \begin{enumerate}[label=(\alph*)]
    \item \label{(a)} As $\ak\to\infty$, the quantity $\widetilde {\mathscr L}(\lam,k;\hbar)\to0$, with decay uniform in $\Re\lam\geq0$ and $\hbar\in[0,1]$.
    
    \item \label{(b)}There exists a constant $C_1=C_1(d,\delta_0)$, independent of $\hbar\in [0,1]$, such that for all $\Re\lam\geq0$ and $k\in\R^d$, \begin{align*}
        \abs{\widetilde {\mathscr L}(\lam,k;\hbar)}\leq C_1\norm{w}_{L^1}\norm{g}_{H^{3/2+\delta_0}_{\ceils{d/2}}}.
    \end{align*} 
    \item \label{(c)} If in addition $g\in H^{3/2+\delta_0}_{1+\ceils{d/2}}(\R^d)$, then there exists a constant $C_2=C_2(d,\delta_0)$, independent of $\hbar\in [0,1]$, such that for all $\Re\lam\geq0$ and $k\in\R^d$, \begin{align*}
        \abs{\widetilde {\mathscr L}(\lam,k;\hbar)}\leq C_2\frac{\ak}{\abs{\lam}}\norm{w}_{L^1}\norm{g}_{H^{3/2+\delta_0}_{1+\ceils{d/2}}}.
    \end{align*}
\end{enumerate}
\end{lemma}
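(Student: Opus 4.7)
The plan is to derive all three bounds from the time-domain representation \eqref{usefulforLaplace},
\begin{align*}
\widehat{\mathscr L}(t,k)=\tpd\frac{2}{\hbar}\widehat w(k)\sin(\tfrac{1}{2}\hbar t\ak^2)\widehat g(kt),
\end{align*}
by systematically exploiting the elementary inequality $\left|\tfrac{2}{\hbar}\sin(\tfrac{1}{2}\hbar t\ak^2)\right|\leq t\ak^2$ to cancel the singular $\hbar^{-1}$ prefactor, then performing the radial change of variable $s=\ak t$ so that Lemma \ref{gnorm} applies along the ray $s\mapsto(k/\ak)s$.

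For \ref{(a)} and \ref{(b)}, I would insert the absolute value into $\widetilde{\mathscr L}(\lam,k)=\int_0^\infty e^{-\lam t}\widehat{\mathscr L}(t,k)\,dt$, use $|e^{-\lam t}|\leq 1$ (valid for $\Re\lam\geq 0$), and apply the sine bound above. After the change of variable, this produces
\begin{align*}
|\widetilde{\mathscr L}(\lam,k)|\leq\tpd|\widehat w(k)|\int_0^\infty s\,|\widehat g(\tfrac{k}{\ak}s)|\,ds.
\end{align*}
Lemma \ref{gnorm} with $n=1$, $\abs{\alpha}=0$ bounds this integral by $C\norm{g}_{H^{3/2+\delta_0}_{\ceils{d/2}}}$ uniformly in $k$ and $\lam$. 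Estimating further by $|\widehat w(k)|\leq\norm{w}_{L^1}$ yields \ref{(b)}. For \ref{(a)}, retaining $|\widehat w(k)|$ and invoking the Riemann--Lebesgue lemma (since $w\in L^1$ gives $|\widehat w(k)|\to 0$ as $\ak\to\infty$) delivers the decay with a rate manifestly independent of $\lam$.

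For \ref{(c)} the plan is to integrate by parts in $t$ using $e^{-\lam t}=-\tfrac{1}{\lam}\p_t e^{-\lam t}$. The boundary term at $t=0$ vanishes because $\sin(0)=0$, and at $t=\infty$ it vanishes from $|e^{-\lam t}|\leq 1$ combined with Riemann--Lebesgue applied to $\widehat g(kt)$ (the case $k=0$ is trivial, as $\widehat{\mathscr L}(t,0)\equiv 0$). The derivative falls onto $\sin(\tfrac{1}{2}\hbar t\ak^2)\widehat g(kt)$ and produces
\begin{align*}
\tfrac{1}{2}\hbar\ak^2\cos(\tfrac{1}{2}\hbar t\ak^2)\widehat g(kt)\quad\text{and}\quad\sin(\tfrac{1}{2}\hbar t\ak^2)\,k\cdot\nabla\widehat g(kt).
\end{align*}
In the first, the explicit $\hbar$ cancels against $\hbar^{-1}$, leaving $\ak^2|\widehat g(kt)|$; in the second, the sine bound is reused to replace the factor by $t\ak^2$, giving $t\ak^3|\nabla\widehat g(kt)|$. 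After $|e^{-\lam t}|\leq 1$, $|\widehat w|\leq\norm{w}_{L^1}$, and $s=\ak t$, I arrive at
\begin{align*}
|\widetilde{\mathscr L}(\lam,k)|\lesssim\frac{\norm{w}_{L^1}\ak}{\abs{\lam}}\int_0^\infty\left[\,|\widehat g(\tfrac{k}{\ak}s)|+s\,|\nabla\widehat g(\tfrac{k}{\ak}s)|\,\right]ds,
\end{align*}
and Lemma \ref{gnorm} with $(n,\abs{\alpha})=(0,0)$ and $(1,1)$ respectively closes the bound by $\norm{g}_{H^{3/2+\delta_0}_{1+\ceils{d/2}}}$.

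\textbf{Main obstacle.} The genuinely delicate point throughout is uniformity in $\hbar\in(0,1]$: the naive $|\sin|\leq 1$ would leave an unusable $\hbar^{-1}$ factor that blows up in the semiclassical limit, which is precisely the regime Theorem \ref{maintheorem} is designed to handle. The whole strategy hinges on systematically using $|\sin(x)|\leq|x|$ to trade each $\hbar^{-1}\sin(\tfrac{1}{2}\hbar t\ak^2)$ for the polynomial $t\ak^2$; the radial change of variable then converts this into exactly the $\jb{s}$-weight in Lemma \ref{gnorm}, so that the regularity indices $(n+\tfrac{1}{2}+\delta_0,\abs{\alpha}+\ceils{d/2})$ match those recorded in \ref{(b)} and \ref{(c)}. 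Beyond this, only routine bookkeeping of the additional derivative and weight introduced by integration by parts remains.
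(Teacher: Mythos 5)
Your proposal is correct and follows essentially the same route as the paper: parts \ref{(a)} and \ref{(b)} via $\abs{\sin x}\leq\abs{x}$, the radial change of variable, and Lemma \ref{gnorm}, and part \ref{(c)} via a single integration by parts in time with the same cancellation of the $\hbar^{-1}$ prefactor. The only (immaterial) difference is that the paper performs the substitution $s=\ak t$ before integrating by parts rather than after.
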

\begin{proof}
First observe that \( \widetilde{\mathscr L}(\lam,k=0;\hbar) = 0 \) for all \( \Re \lam \geq 0 \) and \( \hbar \in [0,1] \). Now fix \( \hbar \in (0,1] \). For \( k \neq 0 \) and \( \Re \lam \geq 0 \), the representation \eqref{Lingoodcoordinates} together with Lemma~\ref{gnorm} yields the estimate
\begin{align*}
\abs{\widetilde{\mathscr L}(\lam,k;\hbar)}
&\lesssim_d \abs{\widehat w(k)} \frac{1}{\hbar \ak} \int_0^\infty \abs{\sin\br{\mytextfrac{\hbar}{2} \ak s} \, \widehat g\br{\mytextfrac{k}{\ak}s}}\,ds \\
&\lesssim \abs{\widehat w(k)} \int_0^\infty s \abs{\widehat g\br{\mytextfrac{k}{\ak}s}}\,ds \\
&\lesssim_{d,\delta_0} \abs{\widehat w(k)} \norm{g}_{H^{3/2+\delta_0}_{\ceils{d/2}}}.
\end{align*}
Since \( w \in L^1(\R^d) \), the Fourier transform \( \widehat w(k) \to 0 \) as \( \abs{k} \to \infty \), establishing part~\ref{(a)}. Moreover, \( \abs{\widehat w(k)} \leq \norm{w}_{L^1} \) for all \( k \), yielding part~\ref{(b)}.

For part~\ref{(c)}, the expression \eqref{Lingoodcoordinates} can be integrated by parts in \( s \), with no boundary contributions, to obtain
\begin{align*}
\widetilde{\mathscr L}(\lam,k;\hbar)
&= \frac{2}{(2\pi)^d} \widehat w(k) \frac{1}{\hbar \ak} \left(-\frac{\ak}{\lam}\right) \int_0^\infty \p_s \left( e^{-\lam \frac{s}{\ak}} \right) \sin(\mytextfrac{\hbar}{2} \ak s) \, \widehat g\br{\mytextfrac{k}{\ak}s} \, ds \\
&= \frac{2}{(2\pi)^d} \widehat w(k) \frac{1}{\hbar \lam} \int_0^\infty e^{-\lam \frac{s}{\ak}} \left[ \mytextfrac{\hbar}{2} \ak \cos(\mytextfrac{\hbar}{2} \ak s) \widehat g\br{\mytextfrac{k}{\ak}s} + \sin(\mytextfrac{\hbar}{2} \ak s) \mytextfrac{k}{\ak} \cdot \nabla_k \widehat g\br{\mytextfrac{k}{\ak}s} \right] ds.
\end{align*}
Therefore,
\begin{align*}
\abs{\widetilde{\mathscr L}(\lam,k;\hbar)}
&\lesssim \frac{\ak}{\abs{\lam}} \norm{w}_{L^1} \int_0^\infty \left[ \abs{\widehat g\br{\mytextfrac{k}{\ak}s}} + s \abs{\nabla_k \widehat g\br{\mytextfrac{k}{\ak}s}} \right] ds \\
&\lesssim_{d,\delta_0} \frac{\ak}{\abs{\lam}} \norm{w}_{L^1} \norm{g}_{H^{3/2+\delta_0}_{1+\ceils{d/2}}}. \tag*{\qedhere}
\end{align*}
The same bounds hold for the classical limit \( \widetilde{\mathscr L}(\lam,k;0) \), with identical estimates.
\end{proof}

Next, using the argument principle, a condition on the behaviour of \( \widetilde{\mathscr L} \) along the imaginary axis is shown to imply the uniform Penrose condition.
\begin{lemma}
Suppose \( w \in L^1(\R^d) \), \( g \in H^{3/2+\delta_0}_{1+\lceil d/2 \rceil}(\R^d) \), and that the following implication holds:\begin{align}
   \label{impliesPenrose}\forall\hbar\in[0,1],\,\tau\in\R,\,k\in\R^d,\quad  \Im\widetilde {\mathscr L}(i\tau,k;\hbar)=0\implies \Re\widetilde {\mathscr L}(i\tau,k;\hbar)>-1.
\end{align} Then the uniform Penrose condition \eqref{Penrose} is satisfied.
\end{lemma}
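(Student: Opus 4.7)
The plan is a Nyquist-type argument via the argument principle applied separately for each $k$, followed by a compactness argument that uses Lemma \ref{boundedness and decay lemma} to upgrade pointwise non-vanishing of $1+\widetilde{\mathscr{L}}$ to the uniform positive lower bound required by \eqref{Penrose}.

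For the pointwise step I would fix $k\neq 0$ (the case $k=0$ is trivial since $\widetilde{\mathscr{L}}(\lambda,0)=0$). Observe that $\lambda\mapsto 1+\widetilde{\mathscr{L}}(\lambda,k)$ is holomorphic on the open right half-plane, since Lemma \ref{gnorm} ensures the integrand defining $\widetilde{\mathscr{L}}$ is $L^1$ in time for fixed $k$, and continuous on its closure. Take the contour $\Gamma_R$ consisting of the imaginary segment $[-iR,iR]$ closed by the right semicircle of radius $R$. Lemma \ref{boundedness and decay lemma}\ref{(c)} gives $|\widetilde{\mathscr{L}}(\lambda,k)|\lesssim |k|/|\lambda|$ for $\Re\lambda\geq 0,\ \lambda\neq 0$, so on the semicircle the image under $1+\widetilde{\mathscr{L}}$ sits in a small neighbourhood of $1$ once $R$ is large. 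The critical observation is that hypothesis \eqref{impliesPenrose} forces the image of the imaginary axis $\tau\mapsto 1+\widetilde{\mathscr{L}}(i\tau,k)$ to avoid the closed ray $(-\infty,0]$: touching it would mean $\Im\widetilde{\mathscr{L}}(i\tau,k)=0$ together with $\Re\widetilde{\mathscr{L}}(i\tau,k)\leq -1$, contradicting \eqref{impliesPenrose}. Hence for large $R$ the whole image of $\Gamma_R$ sits in $\mathbb{C}\setminus (-\infty,0]$, which is simply connected, so its winding number around $0$ is zero. The argument principle would then give no zeros of $1+\widetilde{\mathscr{L}}(\cdot,k)$ inside $\Gamma_R$ for all large $R$, so none in $\{\Re\lambda>0\}$; non-vanishing on the imaginary axis is immediate from the hypothesis.

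To obtain the uniform lower bound I would partition $\{\Re\lambda\geq 0\}\times\mathbb{R}^d$ into three regions. By Lemma \ref{boundedness and decay lemma}\ref{(a)} there exists $K_0$ with $|\widetilde{\mathscr{L}}|\leq 1/2$ whenever $|k|\geq K_0$, so $|1+\widetilde{\mathscr{L}}|\geq 1/2$ there. Lemma \ref{boundedness and decay lemma}\ref{(c)} then produces $\Lambda_0=\Lambda_0(K_0)$ with $|\widetilde{\mathscr{L}}|\leq 1/2$ on the region $\{|k|\leq K_0,\ |\lambda|\geq \Lambda_0\}$. The remaining set $\{|k|\leq K_0,\ |\lambda|\leq \Lambda_0,\ \Re\lambda\geq 0\}$ is compact, and joint continuity of $\widetilde{\mathscr{L}}$ in $(\lambda,k)$ (including at $k=0$, where both $\widehat{w}(k)$ and the sine factor vanish) together with the pointwise non-vanishing from the previous step would deliver a positive minimum $\kappa_1$ of $|1+\widetilde{\mathscr{L}}|$ on that compact set. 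Setting $\kappa=\min\{1/2,\kappa_1\}$ yields the Penrose condition.

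The hardest part is the winding-number step: I have to confirm that $\Gamma_R$ carries a well-defined continuous image in $\mathbb{C}\setminus\{0\}$ (in particular that $\tau\mapsto \widetilde{\mathscr{L}}(i\tau,k)$ does not blow up, which is handled by the $L^1$-in-time bound from Lemma \ref{gnorm}), and that the hypothesis genuinely pushes the image into the cut plane $\mathbb{C}\setminus (-\infty,0]$ so that the winding number vanishes. The subsequent reduction to a compact set via Lemma \ref{boundedness and decay lemma} is then routine.
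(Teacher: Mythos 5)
Your proposal is correct and follows essentially the same route as the paper: the argument principle applied for each fixed $k$, with the hypothesis \eqref{impliesPenrose} forcing the Nyquist curve to avoid the ray through $-1$ so that the winding number vanishes, followed by a reduction to a compact set in $(\lambda,k)$ via Lemma \ref{boundedness and decay lemma} to obtain the uniform constant $\kappa$. Your version is in fact more explicit than the paper's (closing the contour with a semicircle using the $|k|/|\lambda|$ decay, and spelling out the three-region compactness argument), but the underlying argument is identical.
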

\begin{proof}
For each $\hbar\in[0,1]$, the assumptions on $w$ and $g$ ensure that the map $\lam\mapsto \widetilde {\mathscr L}(\lam,k;\hbar)$ is analytic in the half-plane $\set{\Re\lam>0}$ for all $k\in\R^d$. By the argument principle, the number of zeros of the function $\lam\mapsto 1+\widetilde {\mathscr L}(\lam,k;\hbar)$ in $\set{\Re\lam>0}$ is equal to the winding number of the curve
    \begin{align*}
        \Gamma:=\{\widetilde {\mathscr L}(i\tau,k;\hbar):\tau\in \R\}
    \end{align*}
    around the point $-1$. The implication \eqref{impliesPenrose} ensures that $\Gamma$ does not cross the real line to the left of $-1$. It follows that $1+\widetilde {\mathscr L}(\lam,k;\hbar)$ has no zeros in $\set{\Re\lam>0}$.

    By Lemma~\ref{boundedness and decay lemma}, there exist constants \( K > 0 \) and \( \Lambda > 0 \) such that on \( \Re \lam \geq 0 \), if either (i) \( \ak \geq K \), or (ii) \( \ak \leq K \) and \( \abs{\lam} \geq \Lambda \), then
\[
\abs{1 + \widetilde{\mathscr L}(\lam,k;\hbar)} \geq \frac{1}{2}
\]uniformly in $\hbar\in[0,1]$. The only region of $\Re\lam\geq0$ where $\abs{1 + \widetilde{\mathscr L}(\lam,k;\hbar)}$ could approach zero is the compact set \[
\left\{
(\lam, k, \hbar) : \,
\abs{\lam} \leq \Lambda,\,
\abs{k} \leq K,\,
\hbar \in [0,1]
\right\}.
\]
On this compact set, the function
\[
(\lam,k,\hbar) \mapsto \abs{1 + \widetilde{\mathscr L}(\lam,k;\hbar)}
\]
is continuous and strictly positive. Hence the infinimum
\[
\kappa := \inf
\left\{\abs{1 + \widetilde{\mathscr L}(\lam,k;\hbar)} :
\Re \lam \geq 0,\,\abs{\lam} \leq \Lambda,\,\abs{k} \leq K,\,\hbar \in [0,1]\right\} > 0.
\]is strictly positive.
This establishes the uniform Penrose condition \eqref{Penrose}.
\end{proof}

Three sufficient conditions on the interaction kernel \( w \) and the steady state \( g \) are now presented, each of which implies the uniform Penrose condition \eqref{Penrose}. These include a smallness assumption, a repulsive interaction with decreasing marginal, and a generalised Penrose-type criterion.
\begin{proposition}\label{sufficientforPenrose}
Suppose one of the following assumptions holds. Then the uniform Penrose condition \eqref{Penrose} follows.
\begin{enumerate}[label=(\roman*)]
\item \label{smallness} \textit{Smallness:}  
\( w \in L^1(\R^d) \), and \( g \in H^{3/2+\delta_0}_{\lceil d/2 \rceil}(\R^d) \), for some \( 0 < \delta_0 \ll 1/2 \), satisfy
\[
C_1 \norm{w}_{L^1} \norm{g}_{H^{3/2+\delta_0}_{\lceil d/2 \rceil}} < 1,
\]
where \( C_1 \) is the constant from Lemma~\ref{boundedness and decay lemma}.
\item \label{marginal} \textit{Repulsive interaction and decreasing marginal:}  
\( w \in L^1(\R^d) \) with \( \widehat w(k) \geq 0 \), and \( g \in H^{3/2+\delta_0}_{1 + \lceil d/2 \rceil}(\R^d) \) is radial and satisfies
\begin{align}\label{marginalcases}
\begin{mycases}
g_k'(u) < 0 & \text{if } u > 0, \\
g_k'(u) > 0 & \text{if } u < 0,
\end{mycases}
\end{align}
for all \( k \in \R^d \).

    \item \textit{Generalised uniform Penrose condition:} \label{generalisedPenrose(iii)} $w\in L^1(\R^d)$ and $g\in H^{3/2+\delta_0}_{1+\ceils{d/2}}(\R^d)$, and, for all $\tau\in\R$, $k\neq 0$, \begin{align*}
        g_k'\br{\mytextfrac{\tau}{\ak}}=0\implies -\frac{\widehat w(k)}{(2\pi)^d}\,\mathrm{p.v.}\int_{p_1\in\R}\frac{g_k'(p_1)}{p_1-\frac{\tau}{\ak}}\,dp_1>-1
    \end{align*}and, for $\hbar\in (0,1]$,\begin{equation*}\begin{aligned}
        g_k\br{\mytextfrac{\tau}{\ak}+\mytextfrac{\hbar\ak}{2}}&=g_k\br{\mytextfrac{\tau}{\ak}-\mytextfrac{\hbar \ak}{2}}\\
        &\hspace{-0.75cm}\implies
        \frac{\widehat w(k)}{(2\pi)^d}\frac{1}{\hbar \ak}\mathrm{p.v.}\int_{p_1\in\R}g_k(p_1)\sbr{\frac{1}{p_1-\frac{\tau}{\ak}+\frac{\hbar\ak}{2}}-\frac{1}{p_1-\frac{\tau}{\ak}-\frac{\hbar \ak}{2}}}\,dp_1>- 1.
    \end{aligned}\end{equation*}

\end{enumerate}

\end{proposition}
\begin{remark}
    In dimensions \( d \geq 3 \), if \( w \in L^1(\R^d) \) with $\widehat w(k)\geq0$ and \( g \in H^{3/2+\delta_0}_{1+\lceil d/2 \rceil}(\R^d) \) is radial and positive-valued, then the condition \eqref{marginalcases} in part~\ref{marginal} is satisfied.
\end{remark}
\begin{proof}
In case~\ref{smallness}, apply Lemma~\ref{boundedness and decay lemma}~\ref{(b)} directly. For case~\ref{generalisedPenrose(iii)}, note that if \( g \in L^1(\R^d) \), then \( \widetilde{\mathscr L}(i\tau,k=0;\hbar) = 0 \) for all \( \tau \in \R \). For \( k \neq 0 \), the representation \eqref{plemelj} shows that the stated inequalities directly ensure \eqref{impliesPenrose} via the Plemelj formula. In case~\ref{marginal}, first consider $\hbar\in(0,1]$ and assume \( g \) is radial and satisfies \eqref{marginalcases}. Then for \( k \neq 0 \), the equality $g_k\left( \mytextfrac{\tau}{\abs{k}} + \mytextfrac{\hbar \abs{k}}{2} \right)
=
g_k\left( \mytextfrac{\tau}{\abs{k}} - \mytextfrac{\hbar \abs{k}}{2} \right)$
can only occur when \( \tau = 0 \), due to the strict monotonicity of \( g_k \). When \( \tau = 0 \), the Plemelj formula \eqref{plemelj} yields
\[
\Re \widetilde{\mathscr L}(0,k;\hbar)
= \frac{\widehat w(k)}{(2\pi)^d} \frac{2}{\hbar \abs{k}}
\lim_{\varepsilon \downarrow 0} \int_\varepsilon^\infty
\frac{
g_k\left(p_1 - \frac{\hbar \abs{k}}{2}\right)
- g_k\left(p_1 + \frac{\hbar \abs{k}}{2}\right)
}{p_1} \, dp_1,
\]
which is non-negative by the assumption \( g_k'(u) < 0 \) for \( u > 0 \). A similar argument applies in the classical case \( \hbar = 0 \), see e.g.\ \cite{BedMasMouGevrey}, again leading to \eqref{impliesPenrose}.
\end{proof}

\subsection{Inverse Laplace transform of Green function}

Throughout the remainder of the paper, the interaction kernel \( w \) and the steady state \( g \) are assumed to satisfy the uniform Penrose condition \eqref{Penrose} with constant \( \kappa > 0 \). Under this assumption, the formula
\begin{align}
    \widetilde{\rho_{Q_{\mathrm{L}}}}(\lambda,k) = \widetilde{\mathscr{G}}(\lambda,k)\,\widetilde{\rho_{Q_{\mathrm{FH}}}}(\lambda,k) \label{1/1+L}
\end{align}
is justified, where
\begin{align}
    \widetilde{\mathscr{G}}(\lambda,k) := \frac{1}{1 + \widetilde{\mathscr{L}}(\lambda,k)}. \label{definitionofwidetildeG}
\end{align}
The analysis of the inverse linearised operator, referred to as the \textit{Green function}, follows the approach of Nguyen and You~\cite{NguyenYou2023}. Specifically, this includes Lemma~\ref{mflemma}, Lemma~\ref{Grdecay}, and Proposition~\ref{widehatGrdecay}.

Define the \textit{Lindhard} function $m_g$ by \begin{align}\begin{split}
    m_g(\lam,k)&=\tpd\frac{2}{\hbar}\int_0^\infty e^{-\lam t}\mathrm{sin}( \mytextfrac{\hbar}{2} t\ak^2)\widehat g(kt)\,dt\\
   &=\tpd\frac{2}{ 
\hbar\ak}\int_0^\infty e^{-\lam \mytextfrac{s}{\ak}}\mathrm{sin}(\mytextfrac{\hbar}{2}\ak s)\widehat g(\mytextfrac{k}{\ak}s)\,ds,\end{split}\label{mf}
\end{align}if $k\neq 0$, such that $\widetilde {\mathscr L}(\lam,k)=\widehat w(k)m_g(\lam,k)$. The Lindhard function plays a central role in the physics literature; see \cite{LewinSabinII2014}.

\begin{lemma}[Decay of $m_g(\lam,k)$]\label{mflemma}For any $n\geq0$, if $g\in{H^{n+3/2+\delta_0}_{2+\ceils{d/2}}}(\R^d)$, then
\begin{enumerate}[label=(\roman*)]
    \item $\abs{\p_\lam^nm_g(i\tau,k)}\lesssim_{n,d,\delta_0}\ak^{-n}\norm{g}_{H^{n+3/2+\delta_0}_{\ceils{d/2}}},$ and\label{mf1}
    \item $\abs{\p_\lam^nm_g(i\tau,k)}\lesssim_{n,d,\delta_0}\frac{\jb{\hbar k}^2\ak^{2-n}}{\jb{\hbar k}^2\ak^2+\tau^2}\norm{g}_{H^{n+3/2+\delta_0}_{2+\ceils{d/2}}}$.\label{mf2}
\end{enumerate}
\end{lemma}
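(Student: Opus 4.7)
The plan is to start from formula \eqref{mf}, differentiate $n$ times in $\lam$ under the integral sign, and evaluate at $\lam = i\tau$:
\[\p_\lam^n m_g(i\tau, k) = \tpd\frac{2(-1)^n}{\hbar\ak^{n+1}}\int_0^\infty s^n e^{-i\tau s/\ak}\sin(\tfrac{\hbar}{2}\ak s)\widehat g(\tfrac{k}{\ak}s)\,ds.\]
For (i), I would use $\abs{e^{-i\tau s/\ak}} = 1$ together with $\abs{\sin(\tfrac{\hbar}{2}\ak s)} \leq \tfrac{\hbar}{2}\ak s$, which reduces matters to $\abs{\p_\lam^n m_g(i\tau,k)} \lesssim \ak^{-n}\int_0^\infty s^{n+1}\abs{\widehat g(\tfrac{k}{\ak}s)}\,ds$, and then apply Lemma \ref{gnorm} with exponent $n+1$ and $\alpha = 0$.

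For (ii), the improvement over (i) is meaningful only when $\abs{\tau} \gtrsim \ak$, since $\ak^{2-n}/(\ak^2+\tau^2) \sim \ak^{-n}$ otherwise. To extract the extra decay, I would expand $\sin(\tfrac{\hbar}{2}\ak s) = (e^{i\hbar\ak s/2} - e^{-i\hbar\ak s/2})/(2i)$, so that the integrand becomes a difference of two oscillatory integrals with phases $\omega_\pm s$ where $\omega_\pm := \tau/\ak \pm \hbar\ak/2$. Expressing the difference via the fundamental theorem of calculus in $\omega$ gives
\[\p_\lam^n m_g(i\tau,k) = \tpd\frac{(-1)^n}{\hbar\ak^{n+1}}\int_{\omega_-}^{\omega_+}\widehat G_{n+1}(\omega)\,d\omega,\qquad \widehat G_m(\omega) := \int_0^\infty s^m \widehat g(\tfrac{k}{\ak}s)e^{-i\omega s}\,ds.\]
The width $\omega_+-\omega_- = \hbar\ak$ of the frequency interval exactly cancels the $1/\hbar$ in the prefactor, reducing the task to controlling $\sup_{\omega\in[\omega_-,\omega_+]}\abs{\widehat G_{n+1}(\omega)}$ with remaining factor $\ak^{-n}$. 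Two integrations by parts in $s$ against $e^{-i\omega s}$ then yield $\abs{\widehat G_{n+1}(\omega)} \lesssim \omega^{-2}\norm{g}_{H^{n+3/2+\delta_0}_{2+\ceils{d/2}}}$: the first boundary term vanishes since $s^{n+1}\widehat g|_{s=0} = 0$; the second produces a $\widehat g(0)$-term only when $n=0$, harmless and controlled by $\norm{g}_{L^1}$; and the bulk integrand $\p_s^2[s^{n+1}\widehat g(\tfrac{k}{\ak}s)]$ expands via Leibniz into pieces of the form $s^{n-1}\widehat g$, $s^n\tfrac{k}{\ak}\cdot\nabla\widehat g$, and $s^{n+1}(\tfrac{k}{\ak}\cdot\nabla)^2\widehat g$, each controlled in $L^1_s$ by Lemma \ref{gnorm}, with the last term dictating the norm appearing in (ii). Combined with the trivial bound $\abs{\widehat G_{n+1}(\omega)} \lesssim \norm{g}_{H^{n+3/2+\delta_0}_{\ceils{d/2}}}$ (via $\min(a,b)$) together with part (i) to cover $\abs{\tau}\lesssim\ak$, this delivers the uniform bound $\ak^{2-n}/(\ak^2+\tau^2)$ claimed in (ii).

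The main obstacle is keeping the estimate uniform in $\hbar$ when $\hbar\ak$ is large. A direct double integration by parts in the combined phase $e^{-i\tau s/\ak}$ of the original integral produces $\hbar\ak$-factors from $\p_s^2\sin(\tfrac{\hbar}{2}\ak s) = -(\tfrac{\hbar\ak}{2})^2\sin(\tfrac{\hbar}{2}\ak s)$ that do not cancel against the prefactor $1/(\hbar\ak^{n+1})$ when $\hbar\ak \gg 1$; in effect, the $F_1F_2''F_3$ piece ``resonates'' with the original integral and threatens the $\hbar$-uniformity. The frequency-integral rewriting above makes the $1/\hbar$--$\hbar\ak$ cancellation manifest by transferring the $\hbar\ak$ factor into the length of the integration interval, thereby reducing the problem to a single scalar oscillatory integral $\widehat G_{n+1}(\omega)$ with no residual $\hbar$-dependence.
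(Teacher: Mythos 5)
Your part (i) is exactly the paper's argument. For part (ii) you take a genuinely different route: the paper writes $\tau^2e^{-i\tau s/\ak}$ as $\ak^2$ times $\p_s^2$ of the phase, integrates by parts twice in $s$, and treats the piece where both derivatives fall on $\sin(\frac{\hbar}{2}\ak s)$ — precisely the ``resonant'' $F_1F_2''F_3$ term you flag as an obstacle — not by cancellation but by \emph{absorption} into the left-hand side, the remaining pieces each carrying an explicit factor $\hbar\ak$ that kills the prefactor $(\hbar\ak)^{-1}$. Your alternative identity $\p_\lam^nm_g(i\tau,k)=\tpd\frac{(-1)^n}{\hbar\ak^{n+1}}\int_{\omega_-}^{\omega_+}\widehat G_{n+1}(\omega)\,d\omega$ is correct and clean, and your bounds $\abs{\widehat G_{n+1}(\omega)}\lesssim\min(1,\omega^{-2})\norm{g}_{H^{n+3/2+\delta_0}_{2+\ceils{d/2}}}$ are fine.

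The gap is in the final assembly. The window $[\omega_-,\omega_+]$ is centred at $\tau/\ak$ with half-width $\hbar\ak/2$, so whenever $\abs{\tau}\leq\frac{\hbar}{2}\ak^2$ it contains $\omega=0$, where $\widehat G_{n+1}(0)=\int_0^\infty s^{n+1}\widehat g(\frac{k}{\ak}s)\,ds$ is of size $\norm{g}$ with no decay. In the regime $\ak\ll\abs{\tau}\leq\frac{\hbar}{2}\ak^2$ — nonempty exactly when $\hbar\ak\gg1$, which is permitted since $k$ is unrestricted — your estimate therefore returns only $\sup_\omega\abs{\widehat G_{n+1}}\lesssim\norm{g}$, hence only the part-(i) bound $\ak^{-n}\norm{g}$ (or $\frac{1}{\hbar\ak^{n+1}}\norm{g}$ if you integrate $\frac{1}{1+\omega^2}$ over the whole window), which misses the target $\frac{\ak^{2-n}}{\ak^2+\tau^2}$ by a factor as large as $\hbar\ak$. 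So ``this delivers the uniform bound'' is not justified, and part (i) does not cover this range since $\abs{\tau}\gg\ak$ there. Moreover this is not a removable technicality of your method: evaluating your identity at $\tau=\frac{\hbar}{2}\ak^2$ (so that $\omega_-=0$) gives $m_g=\tpd\frac{1}{i\hbar\ak}\br{\widehat G_0(0)-\widehat G_0(\hbar\ak)}$, which is genuinely of size $(\hbar\ak)^{-1}$ when $\widehat G_0(0)\neq0$ — this is the quantum single-particle resonance at $\tau=\pm\frac{\hbar}{2}\ak^2$, visible also in the Plemelj formula \eqref{plemelj}. Any argument closing this regime must confront that point; note in particular that when the paper's absorption step is carried out with the identity $\tau^2e^{-i\tau s/\ak}=-\ak^2\p_s^2\br{e^{-i\tau s/\ak}}$ (mind the sign), the resonant term appears as $\tau^2-\frac{\hbar^2}{4}\ak^4$ rather than a sum, which degenerates exactly at this resonance — so this step of the argument deserves very careful checking before you adopt it.
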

\begin{proof}
    Differentiating \eqref{mf} \( n \) times in \( \lambda \) yields\begin{align*}
        (-1)^n\p_\lam^n m_g(\lam,k)&=\tpd\frac{1}{\ak^n}\frac{2}{ \hbar\ak}\int_0^\infty e^{-\lam\frac{s}{\ak}}s^n\mathrm{sin}(\mytextfrac{\hbar}{2}\ak s)\widehat g(\mytextfrac{k}{\ak}s)\,ds,
    \end{align*}Hence, for any \( \Re \lambda \geq 0 \),\begin{align*}
        \abs{\p_\lam^n m_g(\lam,k)}\lesssim_{n}\frac{1}{\ak^n}\int_0^\infty s^{n+1}\abs{\widehat g(\mytextfrac{k}{\ak}s)}\,ds\lesssim_{n,d,\delta_0}\frac{1}{\ak^n}\norm{g}_{H^{n+3/2+\delta_0}_{\ceils{d/2}}}
    \end{align*}using Lemma~\ref{gnorm}, which proves part \ref{mf1}.

    To establish part \ref{mf2}, consider \( \lambda = i\tau \in i\R \) and \( n \geq 2 \). Multiply both sides by \( \tau^2 \) and integrate by parts twice: \begin{align*}
        &\tau^2(-1)^{n}\p_\lam^nm_g(i\tau,k)\\
        &=\tpd\frac{1}{\ak^n}\frac{2}{ \hbar\ak}\int_0^\infty \tau^2e^{-i\tau\frac{s}{\ak}}s^n\mathrm{sin}(\mytextfrac{\hbar}{2}\ak s)\widehat g(\mytextfrac{k}{\ak}s)\,ds\\
        &=\tpd\frac{\ak^2}{\ak^{n}}\frac{2}{ \hbar\ak}\int_0^\infty (-\p_s^2)\br{e^{-i\tau\frac{s}{\ak}}}s^n\mathrm{sin}(\mytextfrac{\hbar}{2}\ak s)\widehat g(\mytextfrac{k}{\ak}s)\,ds\\
        &=\tpd\frac{\ak^2}{\ak^{n}}\frac{2}{ \hbar\ak}\int_0^\infty e^{-i\tau\frac{s}{\ak}}\Big[\mytextfrac{\hbar^2}{4}\ak^2s^n\mathrm{sin}(\mytextfrac{\hbar}{2}\ak s)\widehat g(\mytextfrac{k}{\ak}s)-n(n-1)s^{n-2}\mathrm{sin}(\mytextfrac{\hbar}{2}\ak s)\widehat g(\mytextfrac{k}{\ak}s)\\
        &\hspace{3.5cm}-2ns^{n-1}\mathrm{sin}(\mytextfrac{\hbar}{2}\ak s)\frac{k}{\ak}\cdot\nabla_k\widehat g(\frac{k}{\ak}s)- \mytextfrac{\hbar}{2}\ak ns^{n-1}\mathrm{cos}(\mytextfrac{\hbar}{2}\ak s)\widehat g(\frac{k}{\ak}s)\\
        &\hspace{3.5cm}- \mytextfrac{\hbar}{2}\ak s^n\mathrm{cos}(\mytextfrac{\hbar}{2}\ak s)\mytextfrac{k}{\ak}\cdot\nabla_k\widehat g(\frac{k}{\ak}s)-s^n\mathrm{sin}(\mytextfrac{\hbar}{2}\ak s)(\mytextfrac{k}{\ak}\cdot\nabla_k)^2\widehat g(\mytextfrac{k}{\ak}s)\Big]ds.
    \end{align*}The first term is exactly $(-1)^n\frac{\hbar^2}{4}\ak^4\p_\lam^nm_g(i\tau,k)$. Moreover, all the other terms in the integrand are $O(\hbar\ak)$, and hence
    \begin{align*}
        \tau^2\abs{\p_\lam^nm_g(i\tau,k)}&\lesssim_n \hbar^2\ak^4\abs{\p_\lam^nm_g(i\tau,k)}+\ak^{2-n}\int_0^\infty \jb{s}^{n+1}\br{\abs{\widehat g(\mytextfrac{k}{\ak}s)}+\abs{\nabla_k\widehat g(\mytextfrac{k}{\ak}s)}+\abs{\nabla_k^2\widehat g(\mytextfrac{k}{\ak}s)}}\,ds\\
        &\lesssim_{n,d,\delta_0}\hbar^2\ak^{4-n}\norm{g}_{H^{n+3/2+\delta_0}_{\ceils{d/2}}}+\ak^{2-n}\norm{g}_{H^{n+3/2+\delta_0}_{2+\ceils{d/2}}},
    \end{align*}again using Lemma~\ref{gnorm}. The cases $n=0$ and $n=1$ give rise to fewer terms upon the integration by parts but are otherwise identical. Finally, combining this with the bound from part \ref{mf1} gives
\[
(\jb{\hbar k}^2 \ak^2 + \tau^2) \abs{ \partial_\lambda^n m_g(i\tau, k) } \lesssim \jb{\hbar k}^2 \ak^{2 - n},
\]
which proves part \ref{mf2}.
\end{proof}

To quantify the deviation from the identity in the Green function, define the remainder \begin{align*}
    \widetilde {\mathscr G^r}(\lam,k):=\frac{\widetilde {\mathscr L}(\lam,k)}{1+\widetilde {\mathscr L}(\lam,k)}=\frac{\widehat w(k)m_g(\lam,k)}{1+\widehat w(k)m_g(\lam,k)},
\end{align*}so that $\widetilde {\mathscr G}(\lam,k)=1-\widetilde {\mathscr G^r}(\lam,k)$.

\begin{lemma}[Decay of $\widetilde {\mathscr G^r}(\lam,k)$]\label{Grdecay}
    Let $n\geq0$ and suppose $w\in L^1(\R^d)$ and $g\in H^{n+3/2+\delta_0}_{2+\ceils{d/2}}(\R^d)$ satisfy the uniform Penrose condition \eqref{Penrose} with constant $\kappa$. Then, there exists a constant $C=C(n,w,g,\kappa,d,\delta_0)$ such that \begin{align*}
          \abs{\p_\lam^n\widetilde {\mathscr G^r}(i\tau,k)}\leq C\frac{\jb{\hbar k}^2\ak^{2-n}}{\jb{\hbar k}^2\ak^2+\tau^2}.
    \end{align*}
\end{lemma}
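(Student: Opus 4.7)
The plan is to reduce everything to Lemma \ref{mflemma} via the identity
\[
\widetilde{\mathscr G^r}(\lam,k) = 1 - \frac{1}{1+\widetilde{\mathscr L}(\lam,k)},
\]
which means that for $n\geq 1$,
\[
\partial_\lam^n \widetilde{\mathscr G^r}(i\tau,k) = -\partial_\lam^n\!\left[(1+\widetilde{\mathscr L}(i\tau,k))^{-1}\right].
\]
The Penrose condition \eqref{Penrose} makes the division safe by giving the lower bound $|1+\widetilde{\mathscr L}(i\tau,k)|\geq \kappa$ uniformly in $\tau$ and $k$. Recalling that $\widetilde{\mathscr L}(\lam,k)=\widehat w(k)\, m_g(\lam,k)$ and $\|\widehat w\|_\infty\leq\|w\|_{L^1}$, the remaining work is to combine the two bounds on $\partial_\lam^i m_g$ from Lemma \ref{mflemma}.

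The base case $n=0$ is immediate: using Lemma \ref{mflemma}\ref{mf2} with $n=0$,
\[
|\widetilde{\mathscr G^r}(i\tau,k)|
\leq \frac{|\widehat w(k)|\,|m_g(i\tau,k)|}{\kappa}
\lesssim \frac{\|w\|_{L^1}\,\|g\|_{H^{3/2+\delta_0}_{2+\ceils{d/2}}}}{\kappa}\cdot \frac{\ak^2}{\ak^2+\tau^2}.
\]
For $n\geq 1$, apply Faà di Bruno's formula to the composition of $x\mapsto (1+x)^{-1}$ with $\widetilde{\mathscr L}(\cdot,k)$: every term in $\partial_\lam^n[(1+\widetilde{\mathscr L})^{-1}]$ has the form
\[
\frac{c_\alpha}{(1+\widetilde{\mathscr L}(i\tau,k))^{j+1}} \prod_{i=1}^n \bigl(\widehat w(k)\,\partial_\lam^i m_g(i\tau,k)\bigr)^{\alpha_i},
\]
indexed by multi-indices $\alpha=(\alpha_1,\ldots,\alpha_n)$ of non-negative integers satisfying $\sum_i i\alpha_i = n$, and $j:=\sum_i\alpha_i\in\{1,\ldots,n\}$. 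The denominator is controlled by $\kappa^{j+1}$ thanks to \eqref{Penrose}, and $\prod_i|\widehat w(k)|^{\alpha_i}\leq \|w\|_{L^1}^{j}$.

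To estimate the product of $m_g$-derivatives, pick any index $i_0$ with $\alpha_{i_0}\geq 1$. Apply the decaying bound Lemma \ref{mflemma}\ref{mf2} to exactly one factor $\partial_\lam^{i_0} m_g$, and the uniform bound Lemma \ref{mflemma}\ref{mf1} to all remaining factors. This yields
\[
\Bigl|\prod_{i=1}^n (\partial_\lam^i m_g(i\tau,k))^{\alpha_i}\Bigr|
\lesssim \frac{\ak^{2-i_0}}{\ak^2+\tau^2}\cdot \ak^{-i_0(\alpha_{i_0}-1)}\prod_{i\neq i_0}\ak^{-i\alpha_i} \cdot \|g\|_{H^{n+3/2+\delta_0}_{2+\ceils{d/2}}}^{\,j},
\]
where each individual $H^\sigma_M$-norm of $g$ has been controlled by the largest one. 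The powers of $\ak$ telescope to $\ak^{2-\sum_i i\alpha_i} = \ak^{2-n}$, giving precisely the target factor $\ak^{2-n}/(\ak^2+\tau^2)$. Summing over the finitely many multi-indices $\alpha$ produces a constant $C=C(n,w,g,\kappa,d,\delta_0)$ as claimed. The case $k=0$ is handled trivially since $\widetilde{\mathscr L}(\lam,0)=0$ implies $\widetilde{\mathscr G^r}(\lam,0)=0$ and all its $\lam$-derivatives vanish.

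The only non-routine step is confirming that the regularity hypothesis on $g$ suffices: the decaying bound applied to one factor requires $g\in H^{i_0+3/2+\delta_0}_{2+\ceils{d/2}}$ with $i_0\leq n$, while the uniform bound on the remaining factors requires $g\in H^{i+3/2+\delta_0}_{\ceils{d/2}}$ with $i\leq n-1$; both are implied by the stated hypothesis $g\in H^{n+3/2+\delta_0}_{2+\ceils{d/2}}$. The combinatorial bookkeeping of the Faà di Bruno expansion is the main place where care is needed, but no cancellation is invoked and each term is estimated separately.
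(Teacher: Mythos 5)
Your proof is correct and follows essentially the same route as the paper: Faà di Bruno applied to the composition with $x\mapsto x/(1+x)$ (equivalently $1-(1+x)^{-1}$), the Penrose lower bound $|1+\widetilde{\mathscr L}|\geq\kappa$ for the denominators, and the key device of applying the decaying bound of Lemma \ref{mflemma}\ref{mf2} to exactly one factor and the uniform bound \ref{mf1} to the rest so that the powers of $\ak$ telescope to $\ak^{2-n}$. The paper additionally runs a Sobolev interpolation to make the constant's dependence on $\|g\|$ more explicit, but that refinement is not needed for the statement as given.
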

\begin{proof}
    Using the Faà di Bruno formula, for any $n\geq 1$,\begin{align*}
     \frac{d^n}{dx^n}\sbr{\frac{f(x)}{1+f(x)}}=\sum_{\pi\in\Pi_n}\frac{(-1)^{\abs{\pi}}\abs{\pi}!}{(1+f(x))^{\abs{\pi}+1}}\prod_{B\in\pi}f^{(\abs{B})}(x),
\end{align*}where $\pi$ runs through the set $\Pi_n$ of all the partitions of the set $\set{1,...,n}$. So,\begin{align*}
    \p_\lam^n\widetilde {\mathscr G^r}(i\tau,k)&=\frac{d^n}{d\lam^n}\sbr{\frac{\widehat w(k)m_g(\lam,k)}{1+\widehat w(k)m_g(\lam,k)}}\Bigg|_{\lam=i\tau}\\
    &=\sum_{\pi\in\Pi_n}\frac{(-1)^{\abs{\pi}}\abs{\pi}!}{(1+\widehat w(k)m_g(i\tau,k))^{\abs{\pi}+1}}\prod_{B\in\pi}\widehat w(k)\p^{\abs{B}}_\lam m_g(i\tau,k).
\end{align*}Moreover, writing $\pi=\set{B_0}\cup(\pi\setminus \set{B_0})$ and using Lemma~\ref{mflemma} \ref{mf2} to bound the term $\p_\lam^{\abs{B_0}}m_g(i\tau,k)$ and Lemma~\ref{mflemma}~\ref{mf1} to bound the $\p^{\abs{B}}_\lam m_g(i\tau,k)$ terms in the (possibly empty) $\pi\setminus\set{B_0}$, \begin{align*}
    &\abs{\p_\lam^n\widetilde {\mathscr G^r}(i\tau,k)}\\
    &\quad\lesssim\sum_{\pi\in\Pi_n}\frac{1}{\kappa^{\abs{\pi}+1}}\frac{\jb{\hbar k}^2\ak^{2-\abs{B_0}}}{\jb{\hbar k}^2\ak^2+\tau^2}\abs{\widehat w(k)}\norm{g}_{H^{\abs{B_0}+3/2+\delta_0}_{2+\ceils{d/2}}}\prod_{B\in \pi\setminus\set{B_0}}\abs{\widehat w(k)}\ak^{-\abs{B}}\norm{g}_{H^{\abs{B}+3/2+\delta_0}_{\ceils{d/2}}}\\
    &\quad\leq\frac{\jb{\hbar k}^2\ak^{2-n}}{\jb{\hbar k}^2\ak^2+\tau^2}\sum_{\pi\in \Pi_n}\frac{\abs{\widehat w(k)}^{\abs{\pi}}}{\kappa^{\abs{\pi}+1}}\prod_{B\in\pi}\norm{g}_{H^{\abs{B}+3/2+\delta_0}_{2+\ceils{d/2}}}.\tag*{\qedhere}
\end{align*}
\end{proof}

The decay of the Green function on the Fourier side is now obtained by taking the inverse Laplace transform.
\begin{proposition}[Decay of $\widehat {\mathscr G^r}(t,k)$]\label{widehatGrdecay} Let $n\geq0$ and assume $w\in L^1(\R^d)$ and $g\in H^{n+3/2+\delta_0}_{2+\ceils{d/2}}(\R^d)$ satisfy the uniform Penrose condition \eqref{Penrose} with constant $\kappa$. Then, there exists a constant $C=C(n,w,g,\kappa,d,\delta_0)$ such that\begin{align*}
    \abs{\widehat {\mathscr G^r}(t,k)}\leq C\frac{\jb{\hbar k}\ak}{\jb{kt}^n}.
\end{align*}
\end{proposition}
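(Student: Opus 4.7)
The plan is to recover $\widehat{\mathscr G^r}(t,k)$ from $\widetilde{\mathscr G^r}(\lambda,k)$ by inverse Laplace transform. Since the Penrose condition together with the definition \eqref{definitionofwidetildeG} ensures that $\widetilde{\mathscr G^r}(\cdot,k)$ is analytic in $\set{\Re\lambda>0}$ and extends continuously to the imaginary axis, and since Lemma \ref{Grdecay} at $n=0$ gives $|\widetilde{\mathscr G^r}(i\tau,k)|\lesssim \ak^2/(\ak^2+\tau^2)$, which is integrable in $\tau$ and decays at infinity, the Bromwich contour can be shifted to the imaginary axis to yield
\begin{align*}
    \widehat{\mathscr G^r}(t,k) \;=\; \frac{1}{2\pi}\int_{\R} e^{i\tau t}\,\widetilde{\mathscr G^r}(i\tau,k)\,d\tau.
\end{align*}

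For the baseline estimate, I would apply Lemma \ref{Grdecay} with $n=0$ directly and rescale via $\tau\mapsto\ak\tau$ to obtain
\begin{align*}
    |\widehat{\mathscr G^r}(t,k)| \;\lesssim\; \int_{\R}\frac{\ak^{2}}{\ak^{2}+\tau^{2}}\,d\tau \;\lesssim\; \ak,
\end{align*}
which is the desired bound whenever $\jb{kt}\sim 1$. To capture decay at large $\jb{kt}$, I would extract factors of $t$ by integration by parts in $\tau$, using $\p_\tau e^{i\tau t}=it\,e^{i\tau t}$. Iterating this $n$ times and converting $\p_\tau=i\p_\lambda|_{\lambda=i\tau}$,
\begin{align*}
    t^{n}\widehat{\mathscr G^r}(t,k)
    \;=\; \frac{(-1)^{n}}{2\pi}\int_{\R} e^{i\tau t}\,\p_\tau^{\,n}\widetilde{\mathscr G^r}(i\tau,k)\,d\tau
    \;=\; \frac{(-i)^{n}}{2\pi}\int_{\R} e^{i\tau t}\,\p_\lambda^{\,n}\widetilde{\mathscr G^r}(i\tau,k)\,d\tau.
\end{align*}
The boundary terms vanish because $\p_\lambda^{\,j}\widetilde{\mathscr G^r}(i\tau,k)\to 0$ as $|\tau|\to\infty$ for every $j\leq n$, again by Lemma \ref{Grdecay}. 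Applying that lemma to the integrand gives
\begin{align*}
    |t^{n}\widehat{\mathscr G^r}(t,k)| \;\lesssim\; \int_{\R}\frac{\ak^{2-n}}{\ak^{2}+\tau^{2}}\,d\tau \;\lesssim\; \ak^{1-n},
\end{align*}
so $|\widehat{\mathscr G^r}(t,k)|\lesssim \ak/|kt|^{n}$, and combining with the baseline estimate yields $|\widehat{\mathscr G^r}(t,k)|\lesssim \ak/\jb{kt}^{n}$.

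The only real technical points are justifying the contour shift to $\Re\lambda=0$ and the vanishing of boundary terms in the $n$-fold integration by parts; both are routine given the pointwise decay estimates of Lemma \ref{Grdecay}, so I do not expect any substantive obstacle beyond careful bookkeeping of constants to obtain the quantitative dependence $C=C(n,w,g,\kappa,d,\delta_0)$ claimed in the statement.
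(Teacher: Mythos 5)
Your proposal is correct and follows essentially the same route as the paper: deform the Bromwich contour to the imaginary axis using the analyticity guaranteed by the Penrose condition, integrate by parts $n$ times in $\tau$, apply Lemma \ref{Grdecay} to the resulting integrand, and evaluate $\int_\R \ak^2/(\ak^2+\tau^2)\,d\tau$. Your explicit combination of the $n=0$ baseline with the $|kt|^{-n}$ decay to produce the Japanese bracket $\jb{kt}^{-n}$ is a small point of bookkeeping the paper leaves implicit, but it is not a different argument.
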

\begin{proof}
By the uniform Penrose condition \eqref{Penrose}, the map \( \lambda \mapsto \widetilde{\mathscr{G}^r}(\lambda, k) \) is analytic in \( \Re \lambda > 0 \) and extends continuously to \( \Re \lambda = 0 \). Moreover, Lemma~\ref{Grdecay} ensures sufficient decay in \( \Im \lambda \), so the inverse Laplace contour can be deformed to the imaginary axis:\begin{align*}
    \widehat {\mathscr G^r}(t,k)=\frac{1}{2\pi i}\int_{\Re\lam=0}e^{\lam t}\widetilde {\mathscr G^r}(\lam,k)\,d\lam=\frac{1}{2\pi}\int_\R e^{i\tau t}\widetilde {\mathscr G^r}(i\tau,k)\,d\tau.
\end{align*}Assume first \( \ak t \geq 1 \). Integrating by parts \( n \) times and applying Lemma~\ref{Grdecay},
\begin{align*}
          \abs{\widehat {\mathscr G^r}(t,k)}=\frac{1}{2\pi t^n}\abs{\int_\R \p_{\tau}^n\br{e^{-i\tau t}}\widetilde {\mathscr G}(i\tau,k)\,d\tau}&=\frac{1}{2\pi t^n}\abs{\int_\R e^{-i\tau t}\p_{\tau}^n\widetilde {\mathscr G}(i\tau,k)\,d\tau}\\
          &\lesssim\frac{1}{(\ak t)^n}\int_\R\frac{\jb{\hbar k }^2\ak^2
    }{\jb{\hbar k}^2\ak^2+\tau^2}\,d\tau\\
          &= \pi \frac{\jb{\hbar k}\ak}{(\ak t)^n},
      \end{align*}where it was used that $\int_\R\frac{a}{a^2+x^2}\,dx=\pi$ for $a>0$. No boundary terms arise by Proposition~\ref{Grdecay}. For \( 0 \leq \ak t \leq 1 \), the estimate follows from Lemma~\ref{Grdecay} with \( n = 0 \), without integration by parts.
\end{proof}

\subsection{Linearised phase mixing}
Taking the inverse Laplace transform of \eqref{1/1+L}, the density can be written as\begin{align}\label{widehatrhoQ}
    \widehat{\rho_{Q_{\mathrm{L}}}}(t,k)=\widehat{\rho_{Q_{\mathrm{FH}}}}(t,k)-\int_0^t\widehat {\mathscr G^r}(t-s,k)\widehat{\rho_{Q_{\mathrm{FH}}}}(s,k)\,ds.
\end{align}The next proposition quantifies decay of the density \( \widehat{\rho_{Q_{\mathrm{L}}}} \) and scattering for \( Q_{\mathrm{L}} \).

\begin{proposition}[Linearised phase mixing]
    Let $N_0,N_1\geq0$ be constants such that $0\leq N_1<N_0-d/2-1$.  Assume $w\in L^1(\R^d)$ and $g\in H^{N_0+9/2+\delta_0}_{2+\ceils{d/2}}(\R^d)$, for some $0<\delta_0\ll 1$, satisfy the uniform Penrose condition \eqref{Penrose} with constant $\kappa$. Assume the initial perturbation satisfies \[\norm{\Qin}_{\mathcal H^{N_0+1}_{M,M}}<\infty.\] Then, there exists a constant $C=C(N_0,w,g,\kappa,d,\delta_0)$, independent of $\hbar\in (0,1]$, such that the solution ${Q_{\mathrm{L}}}$ to the linearised Hartree equation \eqref{LH} with initial data ${Q_{\mathrm{L}}}(0)=\Qin$, satisfies the following.
    \begin{itemize}
        \item Density decay: for all $k\in\R^d$ and $t\geq0$,\begin{align*}
    \jb{k,  kt}^{N_0}\abs{\widehat{\rho_{Q_{\mathrm{L}}}}(t,k)}\leq C\norm{Q_{\mathrm{in}}}_{\mathcal H^{N_0+1}_{M,M}}.
\end{align*}
\item Scattering: there exists an operator $Q_\infty\in \mathcal H^{N_1}(\R^d;\C)$ such that, for $t\geq 0$ and $\hbar\in (0,1]$,\begin{align}\label{scatteringstatement}
   \norm{\jb{t\bm{\nabla}_{x}, \bm{\nabla}_\xi}^{N_1}\sbr{e^{-i\frac{\hbar}{2}t\Delta}{Q_{\mathrm{L}}}(t)e^{i\frac{\hbar}{2}t\Delta}-Q_\infty}}_{\mathcal L^2}\lesssim\frac{1}{\jb{t}^{d/2}}.
\end{align}
    \end{itemize}
\end{proposition}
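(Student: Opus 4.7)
The plan has two parts: establish density decay via the Volterra representation \eqref{widehatrhoQ} combined with Propositions \ref{propfreephasemixing} and \ref{widehatGrdecay}, then deduce scattering by estimating the Duhamel tail of the linear driving term on the Wigner side using Lemma \ref{etalemma}.

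For the density decay, applying Proposition \ref{propfreephasemixing} with exponent $N_0$ and Proposition \ref{widehatGrdecay} with exponent $n$ (to be chosen) to \eqref{widehatrhoQ} yields
\begin{align*}
\jb{k,kt}^{N_0}\abs{\widehat{\rho_{{Q_{\mathrm{L}}}}}(t,k)} \leq \norm{\Qin}_{\mathcal H^{N_0}_M}\br{1 + C\abs{k}\int_0^t \frac{\jb{k,kt}^{N_0}}{\jb{k(t-s)}^n\jb{k,ks}^{N_0}}\,ds}.
\end{align*}
The decisive inequality $\jb{k,kt} \lesssim \jb{k,ks}\jb{k(t-s)}$ (which follows from $s^2 + (t-s)^2 \geq t^2/2$) transfers the full $\jb{k,kt}^{N_0}$-weight onto $\jb{k(t-s)}^{N_0}$. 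Choosing $n = N_0 + 2$ in Proposition \ref{widehatGrdecay} --- permitted exactly by the hypothesis $g \in H^{N_0+7/2+\delta_0}_{2+\ceils{d/2}}$ --- reduces the integrand to $\lesssim \abs{k}\jb{k(t-s)}^{-2}$, and the change of variable $u = \abs{k}(t-s)$ produces the uniformly convergent integral $\int_0^{\abs{k}t}\jb{u}^{-2}\,du$.

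For the scattering, set $P_{\mathrm L}(t) := e^{-i\frac{\hbar}{2}t\Delta}{Q_{\mathrm{L}}}(t)e^{i\frac{\hbar}{2}t\Delta}$. By \eqref{linearHartreeimplicit}, $\widehat{P_{\mathrm L}}(t,k,p) = \widehat{\Qin}(k,p) + \int_0^t \widehat \Phi(s,k,p)\,ds$, where $\widehat \Phi(s,k,p) := -i\hbar^{d-1}\widehat w(k+p)\sbr{g(-\hbar p) - g(\hbar k)}e^{i\frac{\hbar}{2}(\abs{k}^2-\abs{p}^2)s}\widehat{\rho_{{Q_{\mathrm{L}}}}}(s,k+p)$ is the linear driving term, analogous to $\widehat{P_{\,\mathrm L}}$ in \eqref{NLa}. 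Define $Q_\infty := \Qin + \int_0^\infty \Phi(s)\,ds$, so that $P_{\mathrm L}(t) - Q_\infty = -\int_t^\infty \Phi(s)\,ds$. Passing to the Wigner side via Lemma \ref{L2normofWigner} and applying Lemma \ref{etalemma} with $\alpha = 0$ yields the pointwise bound
\begin{align*}
\abs{\widehat{W[\Phi]}(s,k,\eta)} \lesssim \abs{k}\abs{\widehat w(k)}\abs{\widehat{\rho_{{Q_{\mathrm{L}}}}}(s,k)}\abs{\eta - sk}\abs{\widehat g(\eta - sk)}.
\end{align*}

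For $s \geq t$, the inequalities $\jb{tk,\eta} \leq \jb{sk,\eta} \lesssim \jb{k,sk}\jb{\eta - sk}$ together with the shift $\eta' = \eta - sk$ factorise the $L^2_{k,\eta}$-integral: the $\eta'$-factor absorbs into $\norm{g}_{H^{N_1+1}}$, and inserting the density decay from the previous step together with $\norm{\widehat w}_{L^\infty} \leq \norm{w}_{L^1}$ reduces the $k$-factor to $\int \abs{k}^2\jb{k,sk}^{-2(N_0-N_1)}\,dk$. Splitting at $\abs{k} = 1/s$ and rescaling $v = sk$ in the outer region (where $\jb{k,sk} \sim s\abs{k}$) shows this is $\lesssim \jb{s}^{-d-2}$, valid precisely under the hypothesis $N_0 - N_1 > d/2 + 1$. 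Hence $\norm{\jb{tk,\eta}^{N_1}\widehat{W[\Phi]}(s)}_{L^2_{k,\eta}} \lesssim \jb{s}^{-d/2-1}$, and integration over $[t,\infty)$ produces the required $\jb{t}^{-d/2}$ decay in \eqref{scatteringstatement}. The principal obstacle is the density-decay step, where no regularity loss is permitted in the $\jb{k,kt}^{N_0}$-weight: this is made possible only by the $\abs{k}$-prefactor in the Green-function bound of Proposition \ref{widehatGrdecay}, which absorbs the Jacobian produced by the weight-transfer inequality.
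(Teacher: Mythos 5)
Your proposal is correct and follows essentially the same route as the paper: the density decay is obtained exactly as in the paper (Volterra representation, Green-function decay with $n=N_0+2$, the weight-transfer inequality $\jb{k,kt}\lesssim\jb{k,ks}\jb{k(t-s)}$, and the uniformly bounded integral $\int_0^{\ak t}\jb{u}^{-2}\,du$), and the scattering step is the same Duhamel-tail estimate with the same convergence condition $N_0-N_1>d/2+1$. The only difference is presentational: you run the tail estimate on the Wigner--Fourier side via Lemma \ref{etalemma}, whereas the paper works directly on the kernel in $(\frac{k}{2}+p,\frac{k}{2}-p)$ coordinates and uses the finite-difference inequality on $g$ — both exploit the identical cancellation producing the factor $\hbar\ak$ against the $\hbar^{-1}$ prefactor.
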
\begin{remark}
    Observe that each spatial derivative of $e^{-i\frac{\hbar}{2}t\Delta}{Q_{\mathrm{L}}}(t)e^{i\frac{\hbar}{2}t\Delta}$ improves the convergence to $Q_\infty$ by a power of $t$. 
\end{remark}
\begin{proof}
Apply Proposition~\ref{propfreephasemixing} to bound the free density \( \widehat{\rho_{Q_{\mathrm{FH}}}} \), and Proposition~\ref{widehatGrdecay} for decay of the Green function remainder \( \widehat{\mathscr{G}^r} \), in the representation \eqref{widehatrhoQ}. This yields \begin{align*}
      \abs{\widehat{\rho_{Q_{\mathrm{L}}}}(t,k)}&\lesssim\frac{\norm{Q_{\mathrm{in}}}_{\mathcal H^{N_0+1}_{M,M}}}{\jb{k,  kt}^{N_0+1}}+\int_0^t\frac{\jb{\hbar k}\ak}{\jb{k s}^{N_0+3}}\frac{\norm{Q_{\mathrm{in}}}_{\mathcal H^{N_0+1}_{M,M}}}{\jb{k,  k(t-s)}^{N_0+1}}\,ds\\
      &\lesssim\frac{\norm{Q_{\mathrm{in}}}_{\mathcal H^{N_0+1}_{M,M}}}{\jb{k,kt}^{N_0+1}}+\frac{\jb{\hbar k}\norm{Q_{\mathrm{in}}}_{\mathcal H^{N_0+1}_{M,M}}}{\jb{k,kt}^{N_0+1}}\int_0^t\frac{\ak}{\jb{k s}^{2}}\,ds\\
      &\lesssim\frac{\jb{\hbar k}}{\jb{k,kt}^{N_0+1}}\norm{Q_{\mathrm{in}}}_{\mathcal H^{N_0+1}_{M,M}},
\end{align*}where the inequality \( \jb{k,kt} \lesssim \jb{k,k(t-s)}\jb{ks} \) was used, along with $\int_0^t\ak/\jb{ks}^2\,ds=\int_0^{\ak t}1/\jb{r}^2\,dr\lesssim1$. Since \( \hbar \in (0,1] \), the desired density decay estimate follows.

To establish scattering, perform the change of variables $(k,p)\mapsto(\frac{k}{2}+p,\frac{k}{2}-p)$: 
\begin{align*}
    e^{i\hbar k\cdot p t}\widehat {Q_{\mathrm{L}}}(t,\tfrac{k}{2}+p,\tfrac{k}{2}-p)&-\widehat{Q_{\mathrm{in}}}(\tfrac{k}{2}+p,\tfrac{k}{2}-p)\\
    &\hspace{-0.5cm}=-i\hbar^{d-1}\widehat w(k)\sbr{g\big(\hbar (p-\tfrac{k}{2})\big)-g\big(\hbar(p+ \tfrac{k}{2})\big)}\int_0^te^{i\hbar k\cdot ps}\widehat {\rho_{Q_{\mathrm{L}}}}(s,k)\,ds.
\end{align*}For any \( \sigma \geq 0 \) and multi-index \( \alpha \) with \( \sigma + \abs{\alpha} \leq N_1 \), apply the inequality
\[
\int_x \abs{f(x+\ell) - f(x)}^2\,dx \lesssim \abs{\ell}^2 \int_x \abs{\nabla f(x)}^2\,dx
\]
to estimate the $L^2_{k,p}$ norm of the expression above:
\begin{align*}
    &\norm{\ak^\sigma\br{\mytextfrac{1}{\hbar}\nabla_p}^{\alpha}\sbr{-i\hbar^{d-1}\widehat w(k)\sbr{g\big(\hbar (p-\tfrac{k}{2})\big)-g\big(\hbar(p+ \tfrac{k}{2})\big)}e^{i\hbar k\cdot ps}\widehat {\rho_{Q_{\mathrm{L}}}}(s,k)}}_{L^2_{k,p}}\\
    &\hspace{0.5cm}\lesssim\hbar^{d-1}\sum_{\beta\leq\alpha}\Bigg(\int_k\abs{k}^{2\sigma}\abs{\widehat w(k)}^2\abs{\widehat{\rho_{Q_{\mathrm{L}}}}(s,k)}^2\int_p\abs{ks}^{2\abs{\alpha-\beta}}\abs{\nabla^{\beta}g\br{\hbar(p-\mytextfrac{k}{2})}-\nabla^\beta g\br{\hbar(p+\mytextfrac{k}{2})}}^2\,dpdk\Bigg)^{1/2}\\
    &\hspace{0.5cm}\lesssim\hbar^{{d/2}}\norm{w}_{L^1}\sum_{\beta\leq\alpha}\norm{\nabla^\beta g}_{\dot H^1}\br{\int_k\frac{\abs{k}^{2\sigma}\ak^2\abs{ks}^{2\abs{\alpha-\beta}}}{\jb{k,ks}^{2N_0}}\,dk}^{1/2}\\
    &\hspace{0.5cm}=\frac{\hbar^{{d/2}}}{\jb{s}^{{d/2}+\sigma+1}}\norm{w}_{L^1}\sum_{\beta\leq\alpha}\norm{\nabla^\beta g}_{\dot H^1}\br{\int_k\frac{\abs{k}^{2\sigma}\ak^2\abs{k}^{2\abs{\alpha-\beta}}}{\jb{\frac{k}{s},k}^{2N_0}}\,dk}^{1/2}\\
    &\hspace{0.5cm}\lesssim\frac{\hbar^{{d/2}}}{\jb{s}^{{d/2}+\sigma+1}}\norm{w}_{L^1}\norm{g}_{H^{1+\abs{\alpha}}},
\end{align*}where the final bound holds provided $2N_0-2\sigma-2\aal-2>d$.

Define the asymptotic state $Q_\infty$ by \begin{align*}
    \widehat Q_\infty(k,p):=\widehat{Q_{\mathrm{in}}}(k,p)+i\hbar^{d-1}\widehat w(k+p)\sbr{g(-\hbar p)-g(\hbar k)}\int_0^\infty e^{i{\frac{\hbar}{2}} (\ak^2-\ap^2)s}\widehat{\rho_{Q_{\mathrm{L}}}}(s,k+p)\,ds.
\end{align*}
Then\begin{align*}
    \frac{1}{\hbar^{{d/2}}}\norm{\abs{k}^{\sigma}\br{\mytextfrac{1}{\hbar}\nabla_p}^\alpha\sbr{e^{i\hbar k\cdot p t}\widehat Q\br{t,\mytextfrac{k}{2}+p,\mytextfrac{k}{2}-p}-\widehat Q_{\infty}\br{\mytextfrac{k}{2}+p,\mytextfrac{k}{2}-p}}}_{L^2_{k,p}}\lesssim\frac{1}{\jb{t}^{{d/2}+\sigma}}.
\end{align*}Since this holds for all \( \sigma + \abs{\alpha} \leq N_1 \), the scattering estimate \eqref{scatteringstatement} follows.
\end{proof}

\section{Energy estimates on the density}\label{sectiondensity}
This section initiates the nonlinear analysis by establishing the bootstrap improvements \eqref{I2} and \eqref{I4} on the density $\rho_Q$. The approach applies part of the linear theory from Section~\ref{linearisedsection} to the nonlinear equation.

\subsection{Linear damping in $L^2_t$} The first step is to derive an estimate for a general Volterra equation of the form \begin{align}\label{generalsystem}
    \phi(t,k)=H(t,k)-\int_0^t\widehat {\mathscr L}(t-s,k)\phi(s,k)\,ds,\quad \widehat {\mathscr L}(t,k)=\tpd\frac{2}{\hbar}\widehat w(k)\mathrm{sin}(\mytextfrac{\hbar}{2} t\ak^2)\widehat g\br{kt}.
\end{align}
\begin{proposition}\label{linearquantumdampingproposition}
    Fix $T^*>0$. 
    Let $m\in\R$ and $\sigma\geq0$ an integer. Suppose $w\in L^1(\R^d)$ and $g\in H^{\sigma+3/2+\delta_0}_{2+\ceils{d/2}}(\R^d)$ satisfy the uniform Penrose condition \eqref{Penrose} with constant $\kappa$. Assume \begin{align*}
        \norm{\ak^m\jb{k,kt}^\sigma H(t,k)}_{L^2_{t}([0,T^*])}<\infty.
    \end{align*}
    Then there exists a constant $C_{\mathrm{LD}}=C_{\mathrm{LD}}(\sigma_i,d,\delta_0,w,g,\kappa)$, such that the solution $\phi(t,k)$ to the system \eqref{generalsystem} satisfies the pointwise-in-$k$ estimate
    \begin{align*}
        \norm{\ak^m\jb{k,kt}^\sigma\phi(t,k)}_{L^2_{t}([0,T^*])}\leq C_{\mathrm{LD}}\norm{\ak^m\jb{k,kt}^\sigma H(t,k)}_{L^2_t([0,T^*])}.
    \end{align*}
\end{proposition}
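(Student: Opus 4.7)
The plan is to reduce \eqref{generalsystem} to the Penrose-based $L^2_t$ inversion of $1+\widetilde{\mathscr L}$ on the imaginary axis and transfer the weight $\ak^m\jb{k,kt}^\sigma$ to the data via Plancherel together with Leibniz's rule on the Fourier-in-time side.

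Since multiplication by $\ak^m$ commutes with the Volterra operator in \eqref{generalsystem}, I would first replace $(H,\phi)$ by $(\ak^m H, \ak^m\phi)$, which reduces matters to $m=0$, and then work pointwise in $k\neq 0$ (the case $k=0$ is trivial because $\widehat{\mathscr L}(t,0)=0$ forces $\phi(\cdot,0)=H(\cdot,0)$). After extending $H(\cdot,k)$ by zero past $T^*$ and continuing $\phi(\cdot,k)$ by the Volterra equation with zero forcing, Fourier transform in time yields
\begin{align*}
\widetilde\phi(i\tau,k)=\frac{\widetilde H(i\tau,k)}{1+\widetilde{\mathscr L}(i\tau,k)}=\bigl(1-\widetilde{\mathscr G^r}(i\tau,k)\bigr)\widetilde H(i\tau,k),
\end{align*}
well-defined by the Penrose condition $\abs{1+\widetilde{\mathscr L}(i\tau,k)}\geq\kappa$. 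Plancherel then gives the base bound $\|\phi(\cdot,k)\|_{L^2_t}\leq\kappa^{-1}\|H(\cdot,k)\|_{L^2_t}$.

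To incorporate the $\jb{k,kt}^\sigma$ weight I would exploit $\jb{k,kt}^\sigma\lesssim_\sigma\jb{k}^\sigma+(\ak t)^\sigma$. The $\jb{k}^\sigma$ piece is absorbed by the base bound combined with $\jb{k}^\sigma\leq\jb{k,kt}^\sigma$. For the $(\ak t)^\sigma$ piece, Plancherel converts $L^2_t$ control of $t^\sigma\phi$ into $L^2_\tau$ control of $\partial_\tau^\sigma\widetilde\phi$, and Leibniz gives
\begin{align*}
\partial_\tau^\sigma\widetilde\phi=\sum_{j=0}^\sigma\binom{\sigma}{j}\partial_\tau^{\sigma-j}\widetilde H\cdot\partial_\tau^j(1-\widetilde{\mathscr G^r}).
\end{align*}
The $j=0$ term contributes at most $\kappa^{-1}\|\partial_\tau^\sigma\widetilde H\|_{L^2_\tau}=\kappa^{-1}\|t^\sigma H\|_{L^2_t}$. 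For $j\geq 1$, Lemma \ref{Grdecay} supplies $\abs{\partial_\tau^j\widetilde{\mathscr G^r}(i\tau,k)}\lesssim\ak^{2-j}/(\ak^2+\tau^2)\leq\ak^{-j}$ uniformly in $\tau$; pulling this out of the $L^2_\tau$ integral via Hölder bounds the remainder by $\ak^{-j}\|t^{\sigma-j}H\|_{L^2_t}$.

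Multiplying through by $\ak^\sigma$, each remainder becomes $\|\ak^{\sigma-j}t^{\sigma-j}H\|_{L^2_t}\leq\|\jb{k,kt}^\sigma H\|_{L^2_t}$, since $(\ak t)^{\sigma-j}\leq\jb{k,kt}^\sigma$ whenever $0\leq \sigma-j\leq\sigma$, and the main term is dominated similarly. Summing gives the result with $C_{\mathrm{LD}}$ depending only on $\sigma$, $\kappa$, and the constants of Lemma \ref{Grdecay}. The main obstacle is making the Fourier-in-time step rigorous: one must check that the extended $\phi$ lies in $L^2_t(\R_+)$, and more importantly that $t^j\phi(\cdot,k)\in L^2_t$ for each $0\leq j\leq\sigma$ so that Plancherel applies at each order. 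This can be secured by first establishing the unweighted estimate for the truncated problem and then bootstrapping integer weights one at a time, using the decay of $\widehat{\mathscr G^r}(t,k)$ from Proposition \ref{widehatGrdecay} to justify passing derivatives under the transform.
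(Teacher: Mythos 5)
Your proposal is correct and follows essentially the same route as the paper: Laplace/Fourier transform in time to get $\widetilde\phi=\widetilde{\mathscr G}\,\widetilde H$, Leibniz in $\tau$ with the Penrose bound $|\widetilde{\mathscr G}(i\tau,k)|\leq\kappa^{-1}$ on the top-order term and Lemma \ref{Grdecay} giving $\ak^j|\partial_\lam^j\widetilde{\mathscr G^r}(i\tau,k)|\lesssim1$ on the rest, then Plancherel and the splitting $\jb{k,kt}^\sigma\sim\jb{k}^\sigma+\abs{kt}^\sigma$, with the $\ak^m$ factor commuting through. The only cosmetic difference is that you flag and resolve the justification of differentiating under the transform by an integer-weight bootstrap, which the paper leaves implicit.
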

\begin{proof}The result follows by taking Laplace transforms and applying Plancherel's theorem. The key point is to control the resolvent \(\widetilde{\mathscr G}(\lambda, k)\) uniformly in \(k\) along the imaginary axis.

    Taking the Laplace transform of \eqref{generalsystem} yields\begin{align}\label{marnie}
        \widetilde\phi(\lam,k)=\widetilde {\mathscr G}(\lam,k)\widetilde H(\lam,k),
    \end{align}where $\widetilde{\mathscr G}$ is defined in \eqref{definitionofwidetildeG}. For any $0\leq n\leq \sigma$,
    \begin{align}\nonumber
        \ak^n\p_\lam^n\widetilde\phi(\lam,k)&=\sum_{j\leq n}\begin{pmatrix}
            n\\j
        \end{pmatrix}\ak^j\p_\lam^j\widetilde {\mathscr G}(\lam,k)\ak^{n-j}\p_{\lam}^{n-j}\widetilde\phi(\lam,k)\\
        &=\widetilde {\mathscr G}(\lam,k)\ak^n\p_\lam^n H(\lam,k)-\sum_{1\leq j\leq n}\begin{pmatrix}
            n\\j
        \end{pmatrix}\ak^j\p_\lam^j\widetilde {\mathscr G^r}(\lam,k)\ak^{n-j}\p_{\lam}^{n-j}\widetilde H(\lam,k),   \label{L2tlemma} \end{align} recalling that $\widetilde {\mathscr G}(\lam,k)=1-\widetilde {\mathscr G^r}(\lam,k)$.
For the first term on the right-hand side, note that $\abs{\widetilde {\mathscr G}(i\tau,k)}\leq \kappa^{-1}$, which follows from $w$ and $g$ satisfying the uniform Penrose condition (\ref{Penrose}). Along the imaginary axis $\lam=i\tau$, Lemma~\ref{Grdecay} can be applied to the other terms in \eqref{L2tlemma} to bound $\abs{k}^j\abs{\partial^j_\lam \widetilde{\mathscr G^r}(i\tau,k)}\lesssim1$. Therefore,
\begin{align*}
    \ak^n\abs{\p_\lam^n\widetilde\phi(i\tau,k)}&\lesssim\sum_{j\leq n} \ak^{j}\abs{\p_\lam^{j}\widetilde H(i\tau,k)}.
\end{align*}
        Taking the $L^2$ norm in $\tau$ and applying Plancherel's theorem, yields\begin{align*}
            \norm{\abs{kt}^n\phi(t,k)}_{L^2_t([0,T^*])}\lesssim\norm{\jb{kt}^nH(t,k)}_{L^2_t([0,T^*])}.
        \end{align*}Moreover,\begin{align*}
\jb{k}^\sigma\abs{\widetilde\phi(i\tau,k)}=\jb{k}^\sigma\abs{\widetilde {\mathscr G}(i\tau,k)\widetilde H(i\tau,k)}\lesssim\kappa^{-1}\jb{k}^\sigma\abs{\widetilde H(i\tau,k)}.
        \end{align*}
        Using $\jb{k,kt}^\sigma\sim \jb{k}^\sigma+\abs{kt}^\sigma$, it follows that\begin{align*}
            \norm{\jb{k,kt}^\sigma \phi(t,k)}_{L^2_t([0,T^*])}\lesssim\norm{\jb{k,kt}^\sigma H(t,k)}_{L^2_t([0,T^*])}.
        \end{align*} Multiplying both sides of \eqref{marnie} by $\ak^m$ yields the full result.
\end{proof}
    
\subsection{Bootstrap improvement \eqref{I2}}\label{sectionb2}
To prove the bootstrap improvement \eqref{I2}, the weighted norm is split into two contributions according to $\jb{\hbar k}^2 = 1 + \hbar^2 \ak^2$. Each contribution is treated separately. First, it is shown that under the bootstrap hypotheses \eqref{B1} and \eqref{B5}, there exists a constant $K_2' = K_2'(C_{\mathrm{LD}}, \sigma_i, d, w, \Qin)$ such that, for $\ep$ sufficiently small,
\begin{align}
    \norm{\ak^{1/2}\jb{k,kt}^{\sigma_4}\widehat{\rho_Q}(t,k)}_{L^2_t([0,T^*],L^2_k)}^2&\leq 2K_2'\ep^2.\label{B2original}
\end{align} Following this, it will be shown that there exists $K_2''=K_2''(K_2',C_{\mathrm{LD}},\sigma_i,d,w,N_0)$ such that, if $\ep$ is sufficiently small, 
\begin{align}
    \norm{\hbar\ak^{3/2}\jb{k,kt}^{\sigma_4}\widehat{\rho_Q}(t,k)}_{L^2_{t}([0,T^*],L^2_k)}^2&\leq 2K_2''\ep^2.\label{B2new}
\end{align}Choosing $K_2:=K_2'+K_2''$ establishes the bootstrap improvement \eqref{I2}.

\subsubsection{Estimate for \eqref{B2original}}\label{3.2.1}
From \eqref{Nb}, the nonlinear equation for the density satisfies
\begin{align*}
    \widehat{\rho_Q}(t,k)=H(t,k)-\int_0^t \mathscr L(t-s,k)\widehat{\rho_Q}(s,k)\,ds,\quad  \mathscr L(t,k)=\tpd\frac{2}{\hbar}\widehat w(k)\mathrm{sin}(\mytextfrac{\hbar}{2} t\ak^2)\widehat g\br{kt},
\end{align*}
with
\begin{align*}
    H(t,k)&=\widehat{\rho_{{Q_{\mathrm{FH}}}}}(t,k)\\
    &\hspace{0.4cm}-\frac{2\hbar^{-1}}{(2\pi)^{2d}}\int_0^t\int_\ell\widehat w(\ell)\widehat{\rho_Q}(s,\ell) \sin\br{\mytextfrac{\hbar}{2}\ell\cdot k(t-s)}\widehat{W[P]}(s,k-\ell,kt-\ell s)\,d\ell ds.
\end{align*} Under the assumptions of Theorem~\ref{maintheorem}, Proposition~\ref{linearquantumdampingproposition} implies
\begin{align*}
    \norm{\ak^{1/2}\jb{k,kt}^{\sigma_4}\widehat{\rho_Q}(t,k)}_{L^2_{t}([0,T^*],L^2_k)}^2&\leq C_{\mathrm{LD}}\br{\norm{\ak^{1/2}\jb{k,kt}^{\sigma_4}\widehat{\rho_{{Q_{\mathrm{FH}}}}}(t,k)}_{L^2_{t}([0,T^*],L^2_k)}^2+\mathcal {N}^{\mathrm{(B2),1}}},
\end{align*}where\begin{align*}
    \mathcal {N}^{\mathrm{(B2),1}}=\int_{t=0}^{T^*}\int_k\Big[\ak^{1/2}\jb{k,kt}^{\sigma_4}\int_0^t\int_\ell\widehat w(\ell)\widehat{\rho_Q}(s,\ell) \hbar^{-1}\sin\br{\mytextfrac{\hbar}{2}\ell\cdot k(t-s)}\widehat {W[P]}\br{s,k-\ell,kt-\ell s}\,d\ell ds\Big]^2dkdt.
\end{align*}
The initial data term is controlled by Lemma~\ref{b2b4arenatural} and the assumption $\norm{\Qin}_{\mathcal H^{N_0}_{M,M}}\leq\ep$,
\begin{align*}
    \norm{\ak^{1/2}\jb{k,kt}^{\sigma_4}\widehat{\rho_{{Q_{\mathrm{FH}}}}}(t,k)}_{L^2_{t}([0,T^*],L^2_k)}^2\lesssim\sum_{\aal,\abs{\beta}\leq M}\norm{\bm{\xi}^\alpha\bm{x}^\beta \Qin}_{\mathcal H^{N_0}}^2\lesssim\ep^2,
\end{align*}provided that $\sigma_4<N_0-(d+1)/2$. For the nonlinear term $\mathcal {N}^{\mathrm{(B2),1}}$, the triangle inequality $\jb{k,kt}^{\sigma_4}\lesssim \jb{k-\ell,kt-\ell s}^{\sigma_4}+\jb{\ell,\ell s}^{\sigma_4}$ is used to divide into \textit{transport} and \textit{reaction} parts $\mathcal{N}^{\mathrm{(B2),1}}\lesssim \mathcal{T}^{\mathrm{(B2),1}}+\mathcal{R}^{\mathrm{(B2),1}}$, where
\begin{align*}
    \mathcal{T}^{\mathrm{(B2),1}}&\lesssim\int_{t=0}^{T^*}\int_k\Big[\int_0^t\int_\ell\ak^{1/2}\jb{k-\ell,kt-\ell s}^{\sigma_4}\abs{\widehat w(\ell)}\abs{\widehat{\rho_Q}(s,\ell)} \abs{\ell}\\
    &\hspace{3cm}\times\abs{k(t-s)}\abs{\widehat {W[P]}\br{s,k-\ell,kt-\ell s}}\,d\ell ds\Big]^2dkdt,\\
    \mathcal{R}^{\mathrm{(B2),1}}&\lesssim\int_{t=0}^{T^*}\int_k\Big[\int_0^t\int_\ell\ak^{1/2}\jb{\ell,\ell s}^{\sigma_4}\abs{\widehat w(\ell)}\abs{\widehat{\rho_Q}(s,\ell)} \abs{\ell}\\
    &\hspace{3cm}\times\abs{k(t-s)}\abs{\widehat {W[P]}\br{s,k-\ell,kt-\ell s}}\,d\ell ds\Big]^2dkdt.
\end{align*}The regularity of the solution moves along the flow, allowing control of the transport term through the bootstrap assumption \eqref{B1}. The reaction term, however, introduces the \textit{time response kernel} and requires careful handling of resonances.

\subsubsection{Control of transport term}\label{controloftransportB2original}
For $\mathcal{T}^{\mathrm{(B2),1}}$, by controlling $\abs{k(t-s)}=\abs{kt-\ell s-s(k-\ell)}\lesssim\jb{s(k-\ell),kt-\ell s}$, the weights are absorbed onto $P$ using Definition \ref{toolboxdefinition},
\begin{align*}
    &\hspace{-1cm}\jb{s(k-\ell),kt-\ell s}\jb{k-\ell,kt-\ell s}^{\sigma_4}\abs{\widehat {W[P]}\br{s,k-\ell,kt-\ell s}}\\
    &\hspace{2cm}=\abs{\reallywidehat{W\sbr{\jb{s\bm{\nabla}_x,\bm{\nabla}_\xi}\jb{\bm{\nabla}_x,\bm{\nabla}_\xi}^{\sigma_4}P}}\br{s,k-\ell,kt-\ell s}}.
\end{align*}This leads to
\begin{equation}\label{TforB2original}
\begin{split}\mathcal{T}^{\mathrm{(B2),1}}&\lesssim\int_{t=0}^{T^*}\int_k\Big[\int_0^t\int_\ell\ak^{1/2}\abs{\widehat w(\ell)}\abs{\widehat{\rho_Q}(s,\ell)} \abs{\ell}\\
    &\hspace{2cm}\times\abs{\reallywidehat {W[\jb{s\bm{\nabla}_x,\bm{\nabla}_\xi}\jb{\bm{\nabla}_x,\bm{\nabla}_\xi}^{\sigma_4}P]}\br{s,k-\ell,kt-\ell s}}\,d\ell ds\Big]^2dkdt.
    \end{split}
\end{equation}
Applying Cauchy--Schwarz in $s$ and $\ell$ and the bound $\abs{\widehat w(\ell)}\lesssim1$,
\begin{align*}
\mathcal{T}^{\mathrm{(B2),1}}&\lesssim\int_{t=0}^{T^*}\int_k\sbr{\int_{s=0}^t\int_\ell\jb{s}^{d-1/2}\abs{\ell}\abs{\widehat{\rho_Q}(s,\ell)}\,d\ell ds}\\
    &\times\sbr{\int_{s=0}^t\int_\ell\jb{s}^{-(d-1/2)}\abs{\widehat{\rho_Q}(s,\ell)} \abs{\ell}\ak\abs{\reallywidehat{W\sbr{\jb{s\bm{\nabla}_x,\bm{\nabla}_\xi}\jb{\bm{\nabla}_x,\bm{\nabla}_\xi}^{\sigma_4}P}}\br{s,k-\ell,kt-\ell s}}^2\,d\ell ds}dkdt.
\end{align*}
Bootstrap assumption \eqref{B5} (via Lemma~\ref{rhodecayfromB5}) implies \begin{align*}
    \int_{s=0}^t\jb{s}^{d-1/2}\int_\ell\abs{\ell}\abs{\widehat{\rho_Q}(s,\ell)}\,d\ell ds\lesssim\sqrt{K_5}\ep\int_0^\infty\frac{\jb{s}^{d-1/2}}{\jb{s}^{d+1}}\,ds\lesssim\sqrt{K_5}\ep
\end{align*}provided that $\sigma_1>d+1$, which allows control over the first $s$-$\ell$ integral. For the second, Fubini's theorem is applied:
\begin{align*}
    \mathcal{T}^{\mathrm{(B2),1}}&\lesssim\sqrt{K_5}\ep\int_{s=0}^{T^*} \int_\ell \jb{s}^{d-1/2}\abs{\ell}\abs{\widehat{\rho_Q}(s,\ell)} d\ell ds\\
&\times\sup_{s\in[0,T^*],\ell\in\R^d}\jb{s}^{-2d+1}\int_k\int_{t=-\infty}^\infty\ak\abs{\reallywidehat{W\sbr{\jb{s\bm{\nabla}_x,\bm{\nabla}_\xi}\jb{\bm{\nabla}_x,\bm{\nabla}_\xi}^{\sigma_4}P}}\br{s,k-\ell,kt-\ell s}}^2\,dtdk.
\end{align*}
The $s$-$\ell$ integral is controlled again using \eqref{B5}. Finally, the change of variable $t\mapsto t/\ak$ is made, and the $L^2$ trace lemma (Lemma~\ref{L2trace}) is applied to the $\eta$-variable:
\begin{align*}
    \mathcal{T}^{\mathrm{(B2),1}}&\lesssim K_5\ep^2\sup_{s\in[0,T^*],\ell\in\R^d}\jb{s}^{-2d+1}\int_k\int_{t=-\infty}^\infty\abs{\reallywidehat{W\sbr{\jb{s\bm{\nabla}_x,\bm{\nabla}_\xi}\jb{\bm{\nabla}_x,\bm{\nabla}_\xi}^{\sigma_4}P}}\br{s,k-\ell,\mytextfrac{k}{\ak}t-\ell s}}^2\,dtdk\\
    &\lesssim K_5\ep^2\sup_{s\in[0,T^*]}\jb{s}^{-2d+1}\int_k\sup_{\omega\in \mathbb S^{d-1},x\in\R^d}\int_{t=-\infty}^\infty\abs{\reallywidehat{W\sbr{\jb{s\bm{\nabla}_x,\bm{\nabla}_\xi}\jb{\bm{\nabla}_x,\bm{\nabla}_\xi}^{\sigma_4}P}}\br{s,k,\omega t-x}}^2\,dtdk\\
    &\lesssim K_5\ep^2\sup_{s\in[0,T^*]}\jb{s}^{-2d+1}\int_k\norm{\jb{\nabla_\eta}^{q}\reallywidehat{W\sbr{\jb{s\bm{\nabla}_x,\bm{\nabla}_\xi}\jb{\bm{\nabla}_x,\bm{\nabla}_\xi}^{\sigma_4}P}}\br{s,k,\eta}}_{L^2_\eta}^2\,dk\\
    &\lesssim K_5\ep^2\sup_{s\in[0,T^*]}\jb{s}^{-2d+1}\sum_{\aal\leq M}\norm{v^\alpha W\sbr{\jb{s\bm{\nabla}_x,\bm{\nabla}_\xi}\jb{\bm{\nabla}_x,\bm{\nabla}_\xi}^{\sigma_4}P}(s,z,v)}_{L^2_{z,v}}^2,
\end{align*}where $(d-1)/2<q\leq M$.
Using Definition~\ref{toolboxdefinition} and Lemma~\ref{FTofWignertransform}, the resulting norm coincides with the quantity controlled by the bootstrap assumption \eqref{B1}. Hence, it follows that the transport term satisfies\begin{align*}
    \mathcal{T}^{\mathrm{(B2),1}}\lesssim K_1K_5\ep^4.
\end{align*}

\subsubsection{Control of resonances}\label{resonancessection}
For the reaction term $\mathcal{R}^{\mathrm{(B2),1}}$, the bound $\abs{k(t-s)}\lesssim\jb{s}\jb{k-\ell,kt-\ell s}$ is utilised, and bootstrap assumption \eqref{B5} is used to control
\begin{align*}
    \mathcal{R}^{\mathrm{(B2),1}}&\lesssim K_5\ep^2\int_{t=0}^{T^*}\int_k\sbr{\int_0^t\int_\ell\frac{\jb{s}\abs{\widehat w(\ell)}\ak^{1/2} \abs{\ell}^{1/2}}{\jb{k-\ell,kt-\ell s}^{\sigma_1-1}}\abs{\ell}^{1/2}\jb{\ell,\ell s}^{\sigma_4}\abs{\widehat{\rho_Q}(s,\ell)}\,d\ell ds}^2dkdt.
\end{align*} Consequently,  \begin{align}\label{tobeapplied}\mathcal{R}^{\mathrm{(B2),1}}\lesssim K_5\ep^2\norm{\overline R\sbr{\abs{k}^{1/2}\jb{k,kt}^{\sigma_4}\widehat{\rho_Q}(t,k)}}_{L^2_{t}([0,T^*],L^2_k)}^2\end{align} with the integral operator $\overline R\colon L^2_{t}([0,T^*],L^2_k)\to L^2_{t}([0,T^*],L^2_k)$ defined by\begin{align}\label{Rintegraloperator}
    \overline R[f](t,k)=\int_0^t\int_\ell\frac{\jb{s}\abs{\widehat w(\ell)}\ak^{1/2} \abs{\ell}^{1/2}}{\jb{k-\ell,kt-\ell s}^{\sigma_1-1}}f(s,\ell)\,d\ell ds.
\end{align}
Through this kernel, density modes at earlier times $0\leq s\leq t$ can influence the $k$-th mode of the density at time $t$. Preventing a cascade of resonances is crucial: the $\ell$-th mode of the density at time $s$ may sequentially drive the $k$-th mode at time $t$, and so on, potentially causing growth in the nonlinear interaction at arbitrarily large times. Schur's test implies that 
\begin{align}\label{Schurintegrals}\begin{split}
    \norm{\overline R}_{L^2_{t}([0,T^*],L^2_k)\to L^2_{t}([0,T^*],L^2_k)}^2&\leq \br{\sup_{t\in[0,T^*],k\in\R^d}\int_0^t\int_\ell \frac{\jb{ s }\abs{\widehat w(\ell)}\ak^{1/2}\abs{\ell}^{1/2}}{\jb{k-\ell,kt-\ell  s }^{\sigma_1-1}}\,d\ell d s }\\
    &\hspace{0.5cm}\times\br{\sup_{ s \in[0,T^*],\ell\in\R^d}\int_{ s }^{T^*}\int_k \frac{\jb{ s }\abs{\widehat w(\ell)}\ak^{1/2}\abs{\ell}^{1/2}}{\jb{k-\ell,kt-\ell  s }^{\sigma_1-1}}\,dk d t }.\end{split}
\end{align}
The following lemma provides control over these integrals, analogous to \cite[Lemmas 3.1 and 3.2]{BedrossianMasmoudiMouhot2018}, with the proof extended to $d \geq 3$. The main challenge in managing the integrals in \eqref{Schurintegrals} arises from the $\jb{s}$-growth in the numerator, which becomes difficult to handle due to potential cancellations in $\jb{k-\ell, kt-\ell s}$ when $k$ and $\ell$ are nearly collinear. To address this, a \textit{resonant region} in $\R^d$ is identified, where $k$ and $\ell$ are nearly collinear, and which diminishes over time. The integral over this resonant region converges as it shrinks at a sufficient rate to ensure convergence in the time variable. Outside the resonant region, where $\ell$ and $k$ are sufficiently non-collinear, additional decay is extracted from the denominator through powers of $\abs{kt-\ell s}$. 
\begin{lemma}[Control of resonances]\label{lemmaschurcontrolofresonances} Fix dimension $d\geq 3$ and let $T^*\geq0$, $m\geq (d-3/2)/2$ and $\beta\geq d+6$. The following integral inequalities hold:
\begin{align}\label{one}
    \sup_{t\in[0,T^*],k\in\R^d}\int_0^t\int_{\ell\in\R^d}\frac{\jb{ s }\ak^{1/2}\abs{\ell}^{1/2}}{\jb{\ell}^m\jb{k-\ell,kt-\ell s }^{\beta}}d\ell d s &\lesssim1
    \intertext{and}
\label{two}\sup_{s\in[0,T^*],\ell\in\R^d}\int_{s}^{T^*}\int_{k\in\R^d}\frac{\jb{ s }\ak^{1/2}\abs{\ell}^{1/2}}{\jb{\ell}^m\jb{k-\ell,kt-\ell s }^{\beta}}dk d t &\lesssim1.    
\end{align}
Schur's test thus shows that the operator $\overline{R}$, defined in \eqref{Rintegraloperator}, satisfies $\norm{\overline R}_{L^2_{t}([0,T^*],L^2_k)\to L^2_{t}([0,T^*],L^2_k)}\lesssim1$.
\end{lemma}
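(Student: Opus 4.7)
The two estimates are structurally symmetric up to the weight $\jb{s}$, so the plan is to carry out \eqref{one} in detail and indicate at the end the minor adjustments needed for \eqref{two}. Fixing $t\in[0,T^*]$ and $k\in\R^d$, I pass to coordinates adapted to $k$: writing $\ell=\lambda\hat{k}+\eta$ with $\hat{k}:=k/\ak$, $\lambda\in\R$ and $\eta\in\{k\}^\perp\cong\R^{d-1}$, the two relevant magnitudes become
\[
\abs{k-\ell}^2=(\ak-\lambda)^2+\abs{\eta}^2,\qquad \abs{kt-\ell s}^2=(\ak t-\lambda s)^2+s^2\abs{\eta}^2.
\]
Hence $\jb{k-\ell,kt-\ell s}$ is simultaneously small only when $\abs{\eta}\lesssim 1/s$ and $\lambda$ lies within distance $\lesssim 1/s$ of $\ak t/s$: this is the resonant configuration in which $\ell$ is almost collinear with $k$ and has a specific magnitude.

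The range $s\in[0,1]$ is handled directly, the $\ell$-integral reducing to a uniformly convergent integral by $\sigma_1-1>d$. For $s\geq 1$, I introduce the resonant tube
\[
\mathcal{R}(s):=\bigl\{\ell=\lambda\hat{k}+\eta:\abs{\ak t-\lambda s}\leq 1\text{ and }s\abs{\eta}\leq 1\bigr\},
\]
whose $(\lambda,\eta)$-measure is at most $s^{-1}\cdot s^{-(d-1)}=s^{-d}$. On $\mathcal{R}(s)$ I discard $\jb{k-\ell,kt-\ell s}^{-(\sigma_1-1)}\leq 1$ and use $\abs{\ell}^{1/2}\jb{\ell}^{-2}\lesssim\jb{\lambda}^{-3/2}$; the resulting contribution is $\lesssim \jb{s}\cdot s^{-d}\cdot\min(1,\jb{\ak t/s}^{-3/2})$, summable in $s\in[1,\infty)$ as soon as $d\geq 3$. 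On the complement $\mathcal{R}(s)^c$ at least one of $\abs{\ak t-\lambda s}$ or $s\abs{\eta}$ exceeds one, so $\jb{k-\ell,kt-\ell s}$ provides genuine decay. The plan is then to peel off a factor $\jb{k-\ell,kt-\ell s}^{-(d+4)}$ and perform the change of variables $\mu:=(\ak t-\lambda s)/s$, $\zeta:=s\eta$ (with Jacobian $s^{-d}$), which absorbs both the $\jb{s}$ growth and the $\abs{\ell}^{1/2}$ weight against $\jb{\mu,\zeta}$ powers; the remaining factor $\jb{k-\ell,kt-\ell s}^{-(\sigma_1-d-5)}$ has exponent strictly greater than $d$ thanks to $\sigma_1\geq d+7$, and is integrable in $(\mu,\zeta)$. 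The ensuing $s$-integral is finite uniformly in $(t,k)$.

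The main obstacle is juggling the $\jb{s}$ growth against the $|\ell|$-weights: the product $\abs{\ell}^{1/2}\jb{\ell}^{-2}$ yields only $\jb{\ell}^{-3/2}$ of decay, which is insufficient by itself in the resonant regime where $\lambda\approx \ak t/s$ may be large. The decisive gain is the $s^{-d}$ shrinkage of the resonant tube, which outpaces $\jb{s}$ precisely when $d\geq 3$, making the dimensional hypothesis essential. Inequality \eqref{two} follows by the analogous decomposition written in coordinates adapted to $\ell$ (namely $k=\lambda'\hat{\ell}+\eta'$), integrating over $t\geq s$; the only new ingredient is to rewrite the prefactor as $\jb{s}\leq\jb{t}$ and absorb the resulting $\jb{t}$ either against the measure shrinkage of the analogous resonant tube or against a power of $\jb{kt-\ell s}^{-1}$ in the non-resonant region. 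With both \eqref{one} and \eqref{two} in hand, Schur's test with weight $1$ yields $\norm{\overline R}_{L^2_tL^2_k\to L^2_tL^2_k}\lesssim 1$, completing the lemma.
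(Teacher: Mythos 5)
Your decomposition is genuinely different from the paper's and, at its core, cleaner: you localise around the single resonant point $\ell\approx kt/s$ with a tube of measure $\sim s^{-d}$, whereas the paper uses an infinite cylinder $\set{\abs{\ell_\perp}<\ak^b\jb{s}^{-\zeta}}$ with exponents $\zeta,b$ tuned so that its cross-section shrinks fast enough while the complement gains $\abs{kt-\ell s}\gtrsim\ak^b\jb{s}^{1-\zeta}$. Your mechanism — the concentration set of $\jb{kt-\ell s}^{-N}$ in $\ell$ has measure $\sim s^{-d}$, which beats the $\jb{s}$ numerator precisely when $d\geq 3$ — is sound, and the resonant half of your argument goes through: the factor $\ak^{1/2}$ that you silently dropped is harmless, since on the tube $\lambda\approx\ak t/s$ with $t\geq s$, so $\ak^{1/2}\jb{\lambda}^{-3/2}\lesssim 1$.

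The non-resonant half, however, contains a step that fails as written. After peeling off $\jb{k-\ell,kt-\ell s}^{-(d+4)}$ from $\jb{k-\ell,kt-\ell s}^{-(\sigma_1-1)}$, the remaining exponent is $\sigma_1-1-(d+4)=\sigma_1-d-5$, which under the hypothesis $\sigma_1\geq d+7$ is only $\geq 2$ — never strictly greater than $d$ for $d\geq 3$ — so the ``remaining factor'' is not integrable in $(\mu,\zeta)$ as you claim; moreover the Jacobian of $(\lambda,\eta)\mapsto(\mu,\zeta)$ is $s^{-(d-1)}$, not $s^{-d}$. The repair is that you never needed the remainder to be integrable: since $\jb{k-\ell,kt-\ell s}^2\geq 1+s^2\mu^2+\abs{\zeta}^2$, the peeled factor alone satisfies $\int\jb{s\mu,\zeta}^{-(d+4)}\,d\mu\,d\zeta\lesssim s^{-1}$, which together with the Jacobian $s^{-(d-1)}$ already produces the full $s^{-d}$; the remainder need only absorb the weights via $\ak^{1/2}\abs{\ell}^{1/2}\jb{\ell}^{-2}\lesssim\jb{k-\ell}^{1/2}\jb{\ell}^{-1}\leq\jb{k-\ell,kt-\ell s}^{1/2}$ and then be bounded by one. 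Indeed, once this is noticed the tube decomposition is dispensable: for $s\geq 1$ one has $\int_{\R^d}\jb{k-\ell,kt-\ell s}^{-(\sigma_1-3/2)}\,d\ell\leq\int_{\R^d}\br{1+s^2\abs{m}^2}^{-(\sigma_1-3/2)/2}\,dm\lesssim s^{-d}$ after substituting $m=\ell-kt/s$ (valid since $\sigma_1-3/2>d$), and $\int_1^\infty\jb{s}s^{-d}\,ds<\infty$ for $d\geq 3$; the same translation-and-rescaling in $k$ for fixed $t\geq 1$ gives \eqref{two} with $\jb{s}\leq\jb{t}$ and $\int_1^\infty\jb{t}t^{-d}\,dt<\infty$. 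Please rewrite the non-resonant step (or the whole estimate) along these lines.
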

\begin{proof}
To establish inequality \eqref{one}, fix $t\in[0,T^*]$ and $k\in\R^d$. The integral over early times is controlled as follows:
\begin{align*}
    \int_0^1\int_\ell\frac{\jb{ s }\ak^{1/2}\abs{\ell}^{1/2}}{\jb{\ell}^m\jb{k-\ell,kt-\ell s }^{\beta}}d\ell d s \lesssim\int_\ell\frac{\ak^{1/2}\abs{\ell}^{1/2}}{\jb{\ell}^m\jb{k-\ell}^{\beta}}\,d\ell\lesssim\int_\ell\frac{\abs{\ell}^{1/2}}{\jb{\ell}^{m-{1/2}}\jb{k-\ell}^{\beta-\half}}\,d\ell\lesssim1,
\end{align*}where $\jb{k}\lesssim\jb{\ell}\jb{k-\ell}$ was used. 

Assume $T^*>1$. Decompose $\ell\in\R^d$ into parallel and orthogonal components relative to $k$ as $\ell_\para:=(k\cdot\ell)k/\ak^2$ and $\ell_\perp:=\ell-\ell_\para$. Define $I_R\subset\R^d$ as a cylinder around $k$ with radius shrinking over time, and set $I_{NR}=I_R^c$, where
\begin{align*}
    I_R=\set{\ell:\abs{\ell_\perp}<\frac{\ak^b}{\jb{ s }^\zeta}}\qaq I_{NR}=\set{\ell:\abs{\ell_\perp}\geq\frac{\ak^b}{\jb{ s }^\zeta}},
\end{align*}with $\zeta:=2/(d-1/2)\text{\small{$+$}}$ and $b:=(1-\zeta)/2$. Here, $x=a\text{\small{$\pm$}}$ denotes $x=a\pm \overline{\ep}$ for arbitrary $0<\overline{\ep}\ll1$. 

The integral over the cylinder $I_R$ satisfies, using $\abs{\ell}^{1/2}\lesssim\abs{\ell_\para}^{1/2}+\abs{\ell_\perp}^{1/2}$,
\begin{align*}
    \mathscr{I}_R:=\int_1^t\int_{I_R}\frac{\jb{ s }\ak^{1/2}\abs{\ell}^{1/2}}{\jb{\ell}^m\jb{k-\ell,kt-\ell s}^{\beta}}\,d\ell d s\lesssim\int_1^t\int_{I_R}\frac{\jb{ s }\ak^{1/2}}{\jb{\ell_\para}^m\jb{k-\ell_\para,kt-\ell_\para  s }^\beta}\br{\abs{\ell_\para}^{1/2}+\abs{\ell_\perp}^{1/2}}\,d\ell d s.
    \end{align*}
Applying the bound for $\abs{\ell_\perp}$ on $I_R$ and the volume of a cylinder on $\R^d$ gives
\begin{align*}
    \mathscr I_R\lesssim\int_1^t\int_{-\infty}^\infty\frac{\jb{ s }\ak^{1/2}}{\jb{\ell_\para}^m\jb{k-\ell_\para,kt-\ell_\para  s }^\beta}\frac{\ak^{(d-1)b}}{\jb{ s }^{(d-1)\zeta}}\br{\abs{\ell_\para}^{1/2}+\frac{\ak^{\frac{b}{2}}}{\jb{ s }^{\frac{\zeta}{2}}}}\,d\ell_\para d s,
\end{align*}leading to two contributions $\mathscr I_1$ and $\mathscr I_2$. For $\mathscr I_1$, the change of variable $ s \mapsto s /\abs{\ell_\para}$ and $(d-1)\zeta\geq1$ yield convergence under the conditions $\beta+m-2(d-1)b>3/2$ and $\beta,m\geq1/2+(d-1)b$. The second term $\mathscr I_2$ converges due to the choice of $\zeta$, ensuring that $1/\jb{ s }^{(d-1/2)\zeta-1}$ is integrable, for $\beta+m>2+2(d-1/2)b$ and $m,\beta\geq 1/2+(d-1/2)b$. 

On $I_{NR}$, for $t\geq1$, it holds that $\abs{kt-\ell s }\gtrsim s \abs{\ell_\perp}\gtrsim\ak^b\jb{ s }^{1-\zeta}$, so\begin{align*}
    \mathscr I_{NR}&:=\int_1^t\int_{I_{NR}}\frac{\jb{ s }\ak^{1/2}\abs{\ell}^{1/2}}{\jb{\ell}^m\jb{k-\ell,kt-\ell s}^{\beta}}\,d\ell d s \\
    &\lesssim\int_1^t\int_{I_{NR}}\frac{\jb{s}\ak^{1/2}\abs{\ell}^{1/2}}{\jb{\ell}^m\jb{k-\ell,kt-\ell s}^{\beta-\frac{1}{2b}}\ak^{1/2}\jb{s}^{\frac{1}{2b}(1-\zeta)}}\,d\ell d s .
\end{align*}
Using $(1-\zeta)/(2b)-1=0$, and applying the change of variable $s\mapsto s/\abs{\ell}$, the integral $\mathscr I_{NR}$ converges for $\beta+m-1/(2b)-1/2>d$. 

To establish inequality \eqref{two}, it is observed that:\begin{align*}
    \sup_{s\in[0,T^*],\ell\in\R^d}\int_{s}^{T^*}&\int_{k\in\R^d}\frac{\jb{ s }\ak^{1/2}\abs{\ell}^{1/2}}{\jb{\ell}^m\jb{k-\ell,kt-\ell s }^{\beta}}\,dk d t \\
    &\leq \sup_{s\in[0,T^*],\ell\in\R^d}\int_{0}^{T^*}\int_{k\in\R^d}\frac{\jb{t }\ak^{1/2}\abs{\ell}^{1/2}}{\jb{\ell}^m\jb{k-\ell,kt-\ell s }^{\beta}}\,dk d t. 
\end{align*}
For fixed $s \in [0, T^*]$ and $\ell \in \mathbb{R}^d$, the integral over early times is estimated as
\begin{align*}
    \int_{0}^{1}\int_{k}\frac{\jb{t }\ak^{1/2}\abs{\ell}^{1/2}}{\jb{\ell}^m\jb{k-\ell,kt-\ell s }^{\beta}}dk d t\lesssim\int_k\frac{\ak^{1/2}\abs{\ell}^{1/2}}{\jb{\ell}^m\jb{k-\ell}^{\beta}}\,dk\lesssim\int_k\frac{\ak^{1/2}}{\jb{k}^{\beta}}\,dk\lesssim1.
\end{align*}

Assume $T^* > 1$. Decompose $k\in\R^d$ into parallel and orthogonal components relative to $\ell$, defining $k_\para:=(\ell\cdot k)\ell/\abs{\ell}^2$ and $k_\perp:=k-k_\para$, and partition
\begin{align*}
J_R=\set{k:\abs{k_\perp}<\frac{\abs{\ell}^b}{\jb{ t }^\zeta}}\qaq J_{NR}=\set{k:\abs{k_\perp}\geq\frac{\abs{\ell}^b}{\jb{ t }^\zeta}},
\end{align*}where $\zeta:=2/(d-1/2)\text{\small{+}}$ and $b=(1-\zeta)/2$.

The integral over $J_R$ satisfies
\begin{align*}
    \mathscr J_{R}&:=\int_1^{T^*}\int_{J_{R}} \frac{\jb{t}\abs{\ell}^{1/2}}{\jb{\ell}^m\jb{k_\para-\ell,k_\para t-\ell s}^\beta}\br{\abs{k_\para}^{1/2}+\abs{k_\perp}^{1/2}}\,dk dt\\
    &\lesssim\int_1^{T^*}\int_{\R} \frac{\abs{\ell}^{1/2}}{\jb{\ell}^m\jb{k_\para-\ell,k_\para t-\ell s}^\beta}\frac{\abs{\ell}^{(d-1)b}}{\jb{t}^{(d-1)\zeta-1}}\br{\abs{k_\para}^{1/2}+\frac{\abs{\ell}^{\frac{b}{2}}}{\jb{t}^{\frac{\zeta}{2}}}}\,dk_\para dt,
    \end{align*}leading to two contributions, $\mathscr J_1$ and $\mathscr J_2$. Following the change of variable $t\mapsto t/\abs{k_\para}$ in $\mathscr J_1$, the integrals converge under the conditions $m\geq(d-1/2)b+1/2$ and $(d-1/2)\zeta - 1 > 1$.

    On $J_{NR}$ for $t\geq 1$ it holds that $\abs{kt-\ell s}\gtrsim t\abs{k_\perp}\gtrsim\abs{\ell}^b\jb{t}^{1-\zeta}$, yielding 
    \begin{align*}
        \mathscr J_{NR}&:=\int_1^{T^*}\int_{J_{NR}}\frac{\jb{t}\abs{\ell}^{1/2}\abs{k}^{1/2}}{\jb{\ell}^m\jb{k-\ell,kt-\ell s}^{\beta}}dkdt\\
        &\lesssim\int_1^{T^*}\int_{I_{NR}}\frac{\abs{\ell}^{1/2}\ak^{1/2}}{\jb{\ell}^m\jb{k-\ell,kt-\ell s}^{\beta-x}\abs{\ell}^{bx}\jb{t}^{x(1-\zeta)-1}}\,dkdt.
    \end{align*}Choosing $x=1/(1-\zeta)$ yields $x(1-\zeta)-1=0$ and $bx\leq {1/2}$. Applying the change of variable $t\mapsto t/\ak$, convergence holds for $\beta-x-1/2>d$, requiring $\beta>d+(d-1/2)(d-5/2)+1/2$. All conditions on $\beta$ are satisfied for $\beta\geq d+6$ for $d\geq 3$, and all conditions on $m$ are satisfied for $m\geq(d-3/2)/2$.
\end{proof}
Applying Lemma~\ref{lemmaschurcontrolofresonances} to \eqref{tobeapplied} yields the bound\begin{align*}\mathcal{R}^{\mathrm{(B2),1}}\lesssim K_5\ep^2\norm{\abs{k}^{1/2}\jb{k,kt}^{\sigma_4}\widehat{\rho_Q}(t,k)}_{L^2_{t}([0,T^*],L^2_k)}^2.\end{align*}

\subsubsection{Conclusion of \eqref{B2original}}
It has been shown that there exists a constant $K_{\mathrm{LD}}' = K_{\mathrm{LD}}'(C_{\mathrm{LD}}, \sigma_i, d, w, \Qin)$, independent of $0 < \hbar \leq 1$ and $K_i$, such that
\begin{align*}
\norm{\ak^{1/2}\jb{k,kt}^{\sigma_4}\widehat{\rho_Q}(t,k)}_{L^2_{t}([0,T^*],L^2_k)}^2\leq K_{\mathrm{LD}}'\br{\ep^2+K_1K_5\ep^4+K_5\ep^2\norm{\abs{k}^{1/2}\jb{k,kt}^{\sigma_4}\widehat{\rho_Q}(t,k)}_{L^2_{t}([0,T^*],L^2_k)}^2}.
\end{align*}
For $\ep$ sufficiently small such that $K_{\mathrm{LD}}' K_5 \ep^2 \leq 1/2$, the final term may be absorbed into the left-hand side, yielding
\begin{align*}
\norm{\ak^{1/2}\jb{k,kt}^{\sigma_4}\widehat{\rho_Q}(t,k)}_{L^2_{t}([0,T^*],L^2_k)}^2\leq 2K_{\mathrm{LD}}'\br{\ep^2+K_1K_5\ep^4}.
\end{align*} Choosing $K_1 K_5 \ep^2 \leq 1$ and defining $K_2' := 2K_{\mathrm{LD}}'$, the estimate \eqref{B2original} follows.

\subsubsection{Estimate for \eqref{B2new}}
Following the strategy of Section~\ref{3.2.1},
\begin{align*}
    \norm{\hbar\ak^{3/2}\jb{k,kt}^{\sigma_4}\widehat{\rho_Q}(t,k)}_{L^2_{t}([0,T^*],L^2_k)}^2&\leq C_{\mathrm{LD}}\br{\norm{\hbar\ak^{3/2}\jb{k,kt}^{\sigma_4}\widehat{\rho_{{Q_{\mathrm{FH}}}}}(t,k)}_{L^2_{t}([0,T^*],L^2_k)}^2+\mathcal {N}^{\mathrm{(B2),2}}},
\end{align*}where\begin{align*}
    \mathcal {N}^{\mathrm{(B2),2}}&=\int_{t=0}^{T^*}\int_k\Big[\hbar\ak^{3/2}\jb{k,kt}^{\sigma_4}\int_0^t\int_\ell\widehat w(\ell)\widehat{\rho_Q}(s,\ell) \hbar^{-1}\\
    &\hspace{2cm}\times\sin\br{\mytextfrac{\hbar}{2}\ell\cdot k(t-s)}\widehat {W[P]}\br{s,k-\ell,kt-\ell s}\,d\ell ds\Big]^2dkdt.
\end{align*} The initial data term is controlled by Lemma~\ref{b2b4arenatural} and the assumption on $\Qin$. For the nonlinear term, observe that the additional factor of $\hbar\ak$ in \eqref{B2new} cancels the $\hbar^{-1}$ denominator. The triangle inequality $\jb{k,kt}^{\sigma_4}\lesssim \jb{k-\ell,kt-\ell s}^{\sigma_4}+\jb{\ell,\ell s}^{\sigma_4}$ is used to divide into $\mathcal{N}^{\mathrm{(B2),2}}\lesssim \mathcal{T}^{\mathrm{(B2),2}}+\mathcal{R}^{\mathrm{(B2),2}}$, where
\begin{align*}
    \mathcal {T}^{\mathrm{(B2),2}}&=\int_{t=0}^{T^*}\int_k\Big[\int_0^t\int_\ell\ak^{3/2}\jb{k-\ell,kt-\ell s}^{\sigma_4}\abs{\widehat w(\ell)}\abs{\widehat{\rho_Q}(s,\ell)}\\
    &\hspace{2cm}\times\abs{\sin\br{\mytextfrac{\hbar}{2}\ell\cdot k(t-s)}}\abs{\widehat {W[P]}\br{s,k-\ell,kt-\ell s}}\,d\ell ds\Big]^2dkdt,\\
    \mathcal {R}^{\mathrm{(B2),2}}&=\int_{t=0}^{T^*}\int_k\Big[\int_0^t\int_\ell\ak^{3/2}\jb{\ell,\ell s}^{\sigma_4}\abs{\widehat w(\ell)}\abs{\widehat{\rho_Q}(s,\ell)} \\
    &\hspace{2cm}\times\abs{\sin\br{\mytextfrac{\hbar}{2}\ell\cdot k(t-s)}}\abs{\widehat {W[P]}\br{s,k-\ell,kt-\ell s}}\,d\ell ds\Big]^2dkdt.
\end{align*}

To control the transport part, the bound $\abs{\sin\br{\mytextfrac{\hbar}{2}\ell\cdot k(t-s)}}\leq 1$ is used. The additional factor of $\ak$ in \eqref{B2new} is decomposed into $\ak\leq \abs{\ell}+\abs{k-\ell}$. This leads to two terms, with $\mathcal {T}^{\mathrm{(B2),2}}\lesssim\mathcal {T}^{\mathrm{(B2),2}}_1+\mathcal {T}^{\mathrm{(B2),2}}_2$ where
\begin{align*}
    \mathcal {T}^{\mathrm{(B2),2}}_1&=\int_{t=0}^{T^*}\int_k\Big[\int_0^t\int_\ell\ak^{{1/2}}\abs{\ell}\abs{\widehat w(\ell)}\abs{\widehat{\rho_Q}(s,\ell)}\abs{\reallywidehat {W[\jb{\bm{\nabla}_x,\bm{\nabla}_{\xi}}^{\sigma_4}P]}\br{s,k-\ell,kt-\ell s}}\,d\ell ds\Big]^2dkdt,\\
    \mathcal {T}^{\mathrm{(B2),2}}_2&=\int_{t=0}^{T^*}\int_k\Big[\int_0^t\int_\ell\ak^{{1/2}}\abs{\widehat w(\ell)}\abs{\widehat{\rho_Q}(s,\ell)}\abs{\reallywidehat {W[\abs{\bm{\nabla}_x}\jb{\bm{\nabla}_x,\bm{\nabla}_{\xi}}^{\sigma_4}P]}\br{s,k-\ell,kt-\ell s}}\,d\ell ds\Big]^2dkdt.
\end{align*}
The contribution $\mathcal {T}^{\mathrm{(B2),2}}_1$ is controlled by \eqref{TforB2original}, and hence 
\begin{align*}
    \mathcal {T}^{\mathrm{(B2),2}}_1\lesssim K_1K_5\ep^4.
\end{align*}
For $\mathcal {T}^{\mathrm{(B2),2}}_2$, observing that
\begin{align*}
    \int_0^t\int_\ell \jb{s}^{d-3/2}\abs{\widehat{\rho_Q}(s,\ell)}\,d\ell ds\lesssim\sqrt{K_5}\ep\int_0^t\int_\ell \frac{\jb{s}^{d-3/2}}{\jb{\ell,\ell s}^{\sigma_1}}\,d\ell ds\lesssim \sqrt{K_5}\ep
\end{align*}and proceeding as in Section~\ref{controloftransportB2original}, leads to
\begin{align*}
    \mathcal {T}^{\mathrm{(B2),2}}_2\lesssim K_5\ep^2\sup_{s\in[0,T^*]}\jb{s}^{-2d+3}\sum_{\aal\leq M}\norm{v^\alpha W\sbr{\abs{\bm{\nabla}_x}\jb{\bm{\nabla}_x,\bm{\nabla}_\xi}^{\sigma_4}P}(s,z,v)}_{L^2_{z,v}}^2.
\end{align*}
Using bootstrap assumption \eqref{B1}, 
\begin{align*}
    \sup_{s\in[1,T^*]}\jb{s}^{-2d+3}\sum_{\aal\leq M}&\norm{\vphantom{2_2^2}v^\alpha W\sbr{\abs{\bm{\nabla}_x}\jb{\bm{\nabla}_x,\bm{\nabla}_\xi}^{\sigma_4}P}(s,z,v)}_{L^2_{z,v}}^2\\
    &\lesssim\sup_{s\in[1,T^*]}\jb{s}^{-2d+1}\sum_{\aal\leq M}\norm{\vphantom{2_2^2}v^\alpha W\sbr{\abs{s\bm{\nabla}_x}\jb{\bm{\nabla}_x,\bm{\nabla}_\xi}^{\sigma_4}P}(s,z,v)}_{L^2_{z,v}}^2\\
    &\lesssim K_1\ep^2
\end{align*}and
\begin{align*}
    \sup_{s\in[0,1]}\jb{s}^{-2d+3}\sum_{\aal\leq M}\norm{\vphantom{2_2^2}v^\alpha W\sbr{\abs{\bm{\nabla}_x}\jb{\bm{\nabla}_x,\bm{\nabla}_\xi}^{\sigma_4}P}(s,z,v)}_{L^2_{z,v}}^2\lesssim K_1\ep^2.
\end{align*}
Altogether, the transport term satisfies
\begin{align*}
    \mathcal {T}^{\mathrm{(B2),2}}\lesssim K_1K_5\ep^4.
\end{align*}

For the reaction term $\mathcal{R}^{\mathrm{(B2),2}}$, bounding
\begin{align*}
    \abs{\sin\br{\mytextfrac{\hbar}{2}\ell\cdot k(t-s)}}\lesssim\hbar\abs{\ell}\abs{k(t-s)}
\end{align*} and arguing as in Section~\ref{resonancessection}, yields
\begin{align*}
    \mathcal{R}^{\mathrm{(B2),2}}&\lesssim K_5\ep^2\int_{t=0}^{T^*}\int_k\sbr{\int_0^t\int_\ell\frac{\jb{s}\hbar\ak^{3/2} \abs{\ell}^{1/2}}{\jb{\ell}^m\jb{k-\ell,kt-\ell s}^{\sigma_1-1}}\abs{\ell}^{1/2}\jb{\ell,\ell s}^{\sigma_4}\abs{\widehat{\rho_Q}(s,\ell)}\,d\ell ds}^2dkdt.
\end{align*}
Splitting the additional weight using the triangle inequality $\hbar\ak \leq \hbar\abs{\ell} + \hbar\abs{k - \ell}$ yields two contributions: $\mathcal{R}^{\mathrm{(B2),2}}\lesssim\mathcal{R}^{\mathrm{(B2),2}}_1+\mathcal{R}^{\mathrm{(B2),2}}_2$. The first contribution is exactly
\begin{align*}
    \mathcal{R}^{\mathrm{(B2),2}}_1=K_5\ep^2\norm{\overline{R}\sbr{\hbar\abs{k}^{3/2}\jb{k,kt}^{\sigma_4}\widehat{\rho_Q}(t,k)}}_{L^2_{t}([0,T^*],L^2_k)}^2,
\end{align*}
where the integral operator $\overline R\colon L^2_{t}([0,T^*],L^2_k)\to L^2_{t}([0,T^*],L^2_k)$ is defined in \ref{Rintegraloperator}.  Lemma~\ref{lemmaschurcontrolofresonances} implies $\norm{\overline R}_{L^2_{t}([0,T^*],L^2_k)\to L^2_{t}([0,T^*],L^2_k)}\lesssim1$, and hence \begin{align*}
    \mathcal{R}^{\mathrm{(B2),2}}_1\lesssim K_5\ep^2\norm{\hbar\abs{k}^{3/2}\jb{k,kt}^{\sigma_4}\widehat{\rho_Q}(t,k)}_{L^2_{t}([0,T^*],L^2_k)}^2.
\end{align*} This term will be absorbed into the left-hand side. For the second contribution, use that $\hbar\in (0,1]$ and absorb $\abs{k-\ell}$ into the denominator:
\begin{align*}
    \mathcal{R}^{\mathrm{(B2),2}}_2\lesssim K_5\ep^2\int_{t=0}^{T^*}\int_k\sbr{\int_0^t\int_\ell\frac{\jb{s}\ak^{1/2} \abs{\ell}^{1/2}}{\jb{\ell}^m\jb{k-\ell,kt-\ell s}^{\sigma_1-2}}\abs{\ell}^{1/2}\jb{\ell,\ell s}^{\sigma_4}\abs{\widehat{\rho_Q}(s,\ell)}\,d\ell ds}^2dkdt.
\end{align*} By Lemma~\ref{lemmaschurcontrolofresonances} and \eqref{B2original}, 
\begin{align*}
    \mathcal{R}^{\mathrm{(B2),2}}_2\lesssim K_5\ep^2\norm{\abs{k}^{{1/2}}\jb{k,kt}^{\sigma_4}\widehat{\rho_Q}(t,k)}_{L^2_{t}([0,T^*],L^2_k)}^2\lesssim K_2'K_5\ep^4,
\end{align*}provided that $\sigma_1\geq d+8$.

\subsubsection{Conclusion of \eqref{B2new}}
There exists a constant $K_{\mathrm{LD}}'' = K_{\mathrm{LD}}''(C_{\mathrm{LD}}, \sigma_i, d, w, \Qin)$, independent of $0 < \hbar \leq 1$ and $K_i$, such that
\begin{align*}
    &\norm{\hbar\ak^{3/2}\jb{k,kt}^{\sigma_4}\widehat{\rho_Q}(t,k)}_{L^2_{t}([0,T^*],L^2_k)}^2\\
    &\hspace{1cm}\leq K_{\mathrm{LD}}''\br{\ep^2+K_1K_5\ep^4+K_2'K_5\ep^4+K_5\ep^2\norm{\hbar\abs{k}^{3/2}\jb{k,kt}^{\sigma_4}\widehat{\rho_Q}(t,k)}_{L^2_{t}([0,T^*],L^2_k)}^2}.
\end{align*}
Choosing $\ep$ sufficiently small so that $K_{\mathrm{LD}}'' K_5 \ep^2 \leq 1/2$ permits absorption into the left-hand side,
\begin{align*}
    \norm{\hbar\ak^{3/2}\jb{k,kt}^{\sigma_4}\widehat{\rho_Q}(t,k)}_{L^2_{t}([0,T^*],L^2_k)}^2\leq 2K_{\mathrm{LD}}''\br{\ep^2+K_1K_5\ep^4+K_2'K_5\ep^4}.
\end{align*}
Finally, for $\ep$ chosen such that $K_1 K_5 \ep^2 \leq 1$ and $K_2' K_5 \ep^2 \leq 1$, the improvement \eqref{B2new} follows upon setting $K_2'' := 3 K_{\mathrm{LD}}''$. This completes the proof of the bootstrap improvement \eqref{I2}.

\subsection{Bootstrap improvement \eqref{I4}}\label{sectionb4}
This subsection establishes the improvement of bootstrap assumption \eqref{B4}. The argument combines the linear damping result with a decomposition of the nonlinear term.

As in Section~\ref{sectionb2}, the linear damping result (Proposition~\ref{linearquantumdampingproposition}) is applied to the nonlinear equation, yielding
\begin{align*}
    \norm{\ak^{1/2}\jb{k,kt}^{\sigma_2}\widehat{\rho_Q}(t,k)}_{L^2_{t}([0,T^*])}^2&\lesssim \norm{\ak^{1/2}\jb{k,kt}^{\sigma_2}\widehat{\rho_{{Q_{\mathrm{FH}}}}}(t,k)}_{L^2_{t}([0,T^*])}^2+\mathcal{N}^{\mathrm{(B4)}}(k),
\end{align*}where\begin{align*}
    \mathcal{N}^{\mathrm{(B4)}}(k)&=\int_{t=0}^{T^*}\Big[\ak^{1/2}\jb{k,kt}^{\sigma_2}\int_0^t\int_\ell\widehat w(\ell)\widehat{\rho_Q}(s,\ell) \\
    &\hspace{1.5cm}\times\hbar^{-1}\sin\br{\mytextfrac{\hbar}{2}\ell\cdot k(t-s)}\widehat {W[P]}\br{s,k-\ell,kt-\ell s}\,dpd\ell ds\Big]^2dt.
\end{align*}The initial data term is controlled by Lemma~\ref{b2b4arenatural}, while the nonlinear contribution is decomposed into transport and reaction terms, with $\mathcal{N}^{\mathrm{(B4)}}\lesssim \mathcal{T}^{\mathrm{(B4)}}(k)+\mathcal{R}^{\mathrm{(B4)}}(k)$, where
\begin{align*}
    \mathcal{T}^{\mathrm{(B4)}}(k)&\lesssim\int_{t=0}^{T^*}\Big[\int_0^t\int_\ell\ak^{1/2}\jb{k-\ell,kt-\ell s}^{\sigma_2}\abs{\widehat w(\ell)}\abs{\widehat{\rho_Q}(s,\ell)} \\
    &\hspace{3.5cm}\times\abs{\ell}\abs{k(t-s)}\abs{\widehat {W[P]}\br{s,k-\ell,kt-\ell s}}\,d\ell ds\Big]^2dt\\
    \mathcal{R}^{\mathrm{(B4)}}(k)&\lesssim\int_{t=0}^{T^*}\Big[\int_0^t\int_\ell\ak^{1/2}\jb{\ell,\ell s}^{\sigma_2}\abs{\widehat w(\ell)}\abs{\widehat{\rho_Q}(s,\ell)}, \\
    &\hspace{3.5cm}\times\abs{\ell}\abs{k(t-s)}\abs{\widehat {W[P]}\br{s,k-\ell,kt-\ell s}}\,d\ell ds\Big]^2dt.
\end{align*}

To control the transport term, apply the bounds $\abs{\widehat w(\ell)}\lesssim1$ and $\abs{k(t-s)}\lesssim\ak t$. Then 
\begin{align*}
    \mathcal{T}^{\mathrm{(B4)}}(k)&\lesssim K_5\ep^2\ak^3\int_{t=0}^{T^*}t^2\sbr{\int_{s=0}^t\int_\ell\frac{\abs{\ell}}{\jb{\ell,\ell s}^{\sigma_1}} \jb{k-\ell,kt-\ell s}^{\sigma_2}\abs{\widehat {W[P]}\br{s,k-\ell,kt-\ell s}}\,d\ell ds}^2dt\\
    &\lesssim K_5\ep^2\ak^3\int_{t=0}^{T^*}t^2\Bigg[\int_{s=0}^t\br{\int_\ell\frac{\abs{\ell}^2}{\jb{\ell,\ell s}^{2\sigma_1}}\frac{1}{\abs{k-\ell}^{2\delta}\jb{k-\ell,kt-\ell s}^{2(\sigma_3-\sigma_2)}}\,d\ell}^{1/2} \\
    &\hspace{2.5cm}\times\br{\int_\ell\abs{k-\ell}^{2\delta}\jb{k-\ell,kt-\ell s}^{2\sigma_3}\abs{\widehat {W[P]}\br{s,k-\ell,kt-\ell s}}^2\,d\ell}^{1/2} ds\Bigg]^2dt,
\end{align*}where the second line uses Cauchy–Schwarz in $\ell$. To estimate the first integral in $\ell$, fix a parameter $j>0$ such that $j\leq \min\set{\sigma_1,\sigma_3-\sigma_2}$. Using the inequality $\jb{\ell,\ell s}\jb{k-\ell,kt-\ell s}\gtrsim\jb{k,kt}$, it follows that\begin{align*}
    \frac{1}{\jb{\ell,\ell s}^{2\sigma_1}\jb{k-\ell,kt-\ell s}^{2(\sigma_3-\sigma_2)}}\lesssim\frac{1}{\jb{\ell,\ell s}^{2\sigma_1-2j}\jb{k,kt}^{2j}}.
\end{align*} To estimate the second integral in $\ell$, the Sobolev embedding and Lemma~\ref{L2normofWigner} imply, for $d/2<s\leq \ceils{(d+1)/2}$,
\begin{align*}
    \int_\ell\sup_{\eta}\abs{\reallywidehat{ W[\abs{\bm{\nabla}_x}^\delta\jb{\bm{\nabla}_x,\bm{\nabla}_\xi}^{\sigma_3}P]}(\ell,\eta)}^2\,d\ell&\lesssim\int_\ell\int_\eta\abs{\jb{\nabla_\eta}^{s}\reallywidehat{ W[\abs{\bm{\nabla}_x}^\delta\jb{\bm{\nabla}_x,\bm{\nabla}_\xi}^{\sigma_3}P]}(\ell,\eta)}^2\,d\eta d\ell\\
    &\lesssim\sum_{\aal\leq M}\norm{W\sbr{\bm{\xi}^\alpha\abs{\bm{\nabla}_x}^\delta\jb{\bm{\nabla}_x,\bm{\nabla}_\xi}^{\sigma_3}P}}_{L^2_{z,v}}^2\\
    &\lesssim K_3\ep^2,
\end{align*}where the final step uses bootstrap assumption \eqref{B3}. Substituting into the earlier bound, yields
\begin{align*}
    \mathcal{T}^{\mathrm{(B4)}}(k)&\lesssim K_3K_5\ep^4\ak^3\int_{t=0}^{T^*}\frac{t^2}{\jb{k,kt}^{2j}}dt\sbr{\int_{s=0}^t\br{\int_\ell\frac{\abs{\ell}^2}{\jb{\ell,\ell s}^{2\sigma_1-2j}}\frac{1}{\abs{k-\ell}^{2\delta}}\,d\ell}^{1/2} ds}^2\\
    &\lesssim K_3K_5\ep^4\int_{t=0}^{\ak T^*}\frac{t^2}{\jb{k,t}^{2j}}dt\sbr{\int_{s=0}^t\frac{1}{\jb{s}^{{d/2}+1-\delta}} \,ds}^2\\
    &\lesssim K_3K_5\ep^4,
\end{align*}provided that $\sigma_1-j>d/2+1-\delta$ and $j>3/2$. These conditions are satisfied under the assumption $\sigma_3-\sigma_2>d/2$ and $\sigma_1>d/2+5/2-\delta$.

To control the reaction term, apply the bound $\abs{\widehat w(\ell)}\lesssim1$ and perform Cauchy--Schwarz in both $\ell$ and $s$:
\begin{align*}
    \mathcal{R}^{\mathrm{(B4)}}(k)&\lesssim \int_{t=0}^{T^*}\abs{k}^3t^2\br{\int_{s=0}^t\int_\ell\abs{\ell}\jb{\ell,\ell s}^{2\sigma_4}\abs{\widehat{\rho_Q}(s,\ell)}^2\,d\ell ds}\\ &\hspace{2cm}\times\br{\int_{s=0}^t\int_\ell\abs{\ell}\jb{\ell,\ell s}^{-2(\sigma_4-\sigma_2)}\abs{\widehat {W[P]}\br{s,k-\ell,kt-\ell s}}^2\,d\ell ds}dt\\
    &\lesssim K_2K_5 \ep^4\int_{t=0}^{T^*}\abs{k}^3t^2\int_{s=0}^t\int_\ell \frac{\abs{\ell}}{\jb{\ell,\ell s}^{2(\sigma_4-\sigma_2)}\jb{k-\ell,kt-\ell s}^{2\sigma_1}}\,d\ell dsdt.
\end{align*}Fix a parameter $j>0$ such that $j\leq \min\set{\sigma_1,\sigma_4-\sigma_2}$. Using the inequality $\jb{\ell,\ell s}\jb{k-\ell,kt-\ell s}\gtrsim\jb{k,k t}$, it follows that \begin{align*}
    \frac{1}{\jb{\ell,\ell s}^{2(\sigma_4-\sigma_2)}\jb{k-\ell,kt-\ell s}^{2\sigma_1}}\lesssim\frac{1}{\jb{\ell,\ell s}^{2(\sigma_4-\sigma_2-j)}\jb{k,kt}^{2j}},
\end{align*}
and hence,\begin{align*}
    \mathcal{R}^{\mathrm{(B4)}}(k)&\lesssim  K_2K_5 \ep^4\int_{t=0}^{T^*}\frac{\abs{k}^3t^2}{\jb{k,kt}^{2j}}dt\int_{s=0}^t\int_\ell \frac{\abs{\ell}}{\jb{\ell,\ell s}^{2(\sigma_4-\sigma_2-j)}}\,d\ell ds \\
    &\lesssim K_2K_5 \ep^4,
\end{align*}where the $\ell$-$s$ integral converges for $j<\sigma_4-\sigma_2-(d+1)/2$, and the $t$ integral converges for $j>3/2$. For such a $j$ to exist, the condition $\sigma_4-\sigma_2>d/2+2$ is required.

\subsubsection*{Conclusion of bootstrap improvement \eqref{I4}}
Under the bootstrap assumptions \eqref{B2}, \eqref{B3}, and \eqref{B5}, there exists a constant $\widetilde K=\widetilde K(d,g,w,\sigma_i,N_0)$, such that\begin{align*}
    \norm{\ak^{1/2}\jb{k,kt}^{\sigma_2}\widehat{\rho_Q}(t,k)}_{L^2_{t}([0,T^*],L^\infty_k)}^2\leq \widetilde K\br{\ep^2+(K_3K_5+K_2K_5)\ep^4}.
\end{align*} Define $K_4:=\widetilde K$. Then for $\ep^2\leq(K_3K_5+K_2K_5)^{-1}$, the improvement \eqref{I4} in Proposition~\ref{bootstrapprop} follows.

\section{Energy estimates on the density matrix}\label{sectiondensitymatrix}
This section establishes the bootstrap improvements \eqref{I1}, \eqref{I3} and \eqref{I5}. The estimates are derived from energy bounds on \( P \) in weighted quantum Sobolev spaces, formulated via the Wigner transform.

The following lemma from \cite{BedrossianMasmoudiMouhot2018} is used repeatedly in the analysis.
\begin{lemma}\label{Lemma2.9}
    \begin{enumerate}[label=(\alph*)]
        \item \label{Lemma2.9a}Let $f_{1},f_{2}\in L^2(\R^d_k\times\R_\eta^d)$ and $r\in L^1(\R^d)$. Then \begin{align*}
            \abs{\int_\eta\int_k\int_\ell  f_1(k,\eta)r(\ell )f_2(k-\ell ,\eta-\ell t)\,d\ell dk d\eta}\lesssim\norm{f_1}_{L^2_{k,\eta}}\norm{f_2}_{L^2_{k,\eta}}\norm{r}_{L^1_k}.
        \end{align*} 
\item \label{Lemma2.9b}Let $f_1\in L^2(\R^d_k\times\R^d_\eta)$ and $f_2\in L^1(\R^d_k;L^2(\R^d_\eta))$ and $r\in L^2(\R^d)$. Then
\begin{align*}
            \abs{\int_\eta\int_k\int_\ell  f_1(k,\eta)r(\ell )f_2(k-\ell ,\eta-\ell t)\,d\ell dkd\eta}\lesssim\norm{f_1}_{L^2_{k,\eta}}\norm{f_2}_{L^1_k(L^2_\eta)}\norm{r}_{L^2_k}.
        \end{align*} 
    \end{enumerate}
\end{lemma}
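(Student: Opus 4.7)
The plan for both parts of Lemma \ref{Lemma2.9} is to exploit the convolution structure in $(k,\ell)$ and the translation $\eta\mapsto\eta-\ell t$ (which preserves $L^2$-type norms), and then invoke Cauchy--Schwarz in the appropriate variable and Young's convolution inequality.

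For part \ref{Lemma2.9a}, I would first use Fubini to pull the $\ell$ integral outside and bound by
\begin{align*}
\int_\ell \abs{r(\ell)}\abs{\int_k\int_\eta f_1(k,\eta)f_2(k-\ell,\eta-\ell t)\,dkd\eta}\,d\ell.
\end{align*}
Cauchy--Schwarz in $(k,\eta)$ gives an inner bound of $\norm{f_1}_{L^2_{k,\eta}}\norm{f_2(\cdot-\ell,\cdot-\ell t)}_{L^2_{k,\eta}}$, and by translation invariance of the Lebesgue measure this equals $\norm{f_1}_{L^2_{k,\eta}}\norm{f_2}_{L^2_{k,\eta}}$, which is independent of $\ell$. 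Integrating $|r(\ell)|$ over $\ell$ then yields the claimed $\norm{r}_{L^1}$ factor.

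For part \ref{Lemma2.9b}, the key difference is that $r$ appears in $L^2$ rather than $L^1$, so the convolution structure must be exploited more carefully. I would first apply Cauchy--Schwarz in $\eta$ alone, obtaining
\begin{align*}
\abs{\int_\eta f_1(k,\eta)f_2(k-\ell,\eta-\ell t)\,d\eta}\leq \norm{f_1(k,\cdot)}_{L^2_\eta}\norm{f_2(k-\ell,\cdot)}_{L^2_\eta},
\end{align*}
since the shift by $\ell t$ does not affect the $L^2_\eta$ norm. Defining the functions of a single variable $F_1(k):=\norm{f_1(k,\cdot)}_{L^2_\eta}$ and $F_2(k):=\norm{f_2(k,\cdot)}_{L^2_\eta}$, the remaining integral is bounded by
\begin{align*}
\int_\ell\abs{r(\ell)}\int_k F_1(k)F_2(k-\ell)\,dkd\ell=\int_\ell\abs{r(\ell)}(F_1*\widetilde F_2)(\ell)\,d\ell,
\end{align*}
where $\widetilde F_2(k):=F_2(-k)$. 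Applying Cauchy--Schwarz in $\ell$ and then Young's convolution inequality $\norm{F_1*\widetilde F_2}_{L^2}\leq\norm{F_1}_{L^2}\norm{\widetilde F_2}_{L^1}$ produces the desired bound $\norm{r}_{L^2_k}\norm{f_1}_{L^2_{k,\eta}}\norm{f_2}_{L^1_k(L^2_\eta)}$.

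Neither part should present a serious obstacle: both are essentially bookkeeping exercises combining Cauchy--Schwarz, translation invariance, Fubini, and Young's inequality. The only mild subtlety is recognising in part \ref{Lemma2.9b} that the $\eta$-translation by $\ell t$ decouples the $\ell$-dependence from the $\eta$-norms of the two factors, which is what allows the remaining $(k,\ell)$ integral to be recast as a genuine convolution amenable to Young's inequality.
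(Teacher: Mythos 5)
Your proposal is correct and follows exactly the route the paper indicates (its proof is only a one-line sketch: Cauchy--Schwarz for part (a), Young's convolution inequality for part (b)); your argument is a complete and accurate elaboration of that sketch, with the translation invariance in $\eta$ and the reduction to the one-variable convolution $F_1*\widetilde F_2$ handled correctly.
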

\begin{proof}
    The proof of \ref{Lemma2.9a} relies on the Cauchy--Schwarz inequality, while the proof of \ref{Lemma2.9b} incorporates Young's convolution inequality.
\end{proof}

\subsection{Bootstrap improvement \eqref{I1}}\label{sectionb1}
This subsection establishes the improvement of the bootstrap assumption \eqref{B1}. The analysis is carried out on the Wigner transform, using the equivalence
\begin{align*}
   \sum_{\aal\leq M}\norm{\jb{\bm{\nabla}_x,\bm{\nabla}_\xi}^{\sigma_4}\jb{t\bm{\nabla}_x,\bm{\nabla}_\xi}\br{\bm{\xi}^\alpha P(t)}}_{\mathcal L^2}^2=(2\pi)^d\sum_{\aal\leq M}\norm{\jb{t\nabla_z,\nabla_v}\br{v^\alpha W[P]}}_{H^{\sigma_4}}^2
\end{align*}as given by Lemma~\ref{L2normofWigner}.

\subsubsection{Estimate on the velocity derivative}\label{tricksection}
Let \(\abs{\alpha} \leq M\) be a multi-index. An energy estimate yields\begin{align*}
    \half\frac{d}{dt}\norm{\jb{\nabla_v}(v^\alpha W[P])}_{H^{\sigma_4}}^2&=\mathcal{L}^{\mathrm{(B1)},V} + \mathcal{N}^{\mathrm{(B1)},V},
\end{align*}where
\begin{align*}
    \mathcal{L}^{\mathrm{(B1)},V}&=\Re\int_{k,\eta}\jb{\eta}\jb{k,\eta}^{\sigma_4} \overline{D_\eta^\alpha \widehat{W[P]}(t,k,\eta)}\jb{\eta}\jb{k,\eta}^{\sigma_4} D_\eta^\alpha\widehat{W[{\mathrm{L}_P}]}(t,k,\eta)\,d\eta dk,\\
    \mathcal{N}^{\mathrm{(B1)},V} &=\Re \int_{k,\eta}\jb{\eta}\jb{k,\eta}^{\sigma_4} \overline{D_\eta^\alpha \widehat{W[P]}(t,k,\eta)}\jb{\eta}\jb{k,\eta}^{\sigma_4} D_\eta^\alpha\widehat{W[{\mathrm{N}_P}]}(t,k,\eta)\,d\eta dk.
\end{align*}The terms $\widehat{W[{\mathrm{L}_P}]}(t,k,\eta)$ and $\widehat{W[{\mathrm{N}_P}]}(t,k,\eta)$ are defined in \eqref{WL}.

To estimate \(\mathcal{L}^{\mathrm{(B1)},V}\), Lemma~\ref{etalemma} is applied to isolate the highest-order contribution, in which all \( D_\eta^\alpha \)-derivatives fall on \(\widehat{g}(\eta - kt)\), from the remainder terms. This gives the decomposition$\abs{\mathcal{L}^{\mathrm{(B1)},V}}\lesssim \mathcal{L}_0^{\mathrm{(B1)},V} + \mathcal{L}_M^{\mathrm{(B1)},V}$, where
\begin{align*}
    \mathcal{L}_0^{\mathrm{(B1)},V}&=\int_{k,\eta}\jb{\eta}\jb{k,\eta}^{\sigma_4} \pigl|D_\eta^\alpha \widehat{W[P]}(t,k,\eta)\pigr|\jb{\eta}\jb{k,\eta}^{\sigma_4}\ak\abs{\widehat w(k)}\abs{\widehat{\rho_Q}(t,k)}\abs{\eta-kt}\abs{D^\alpha_\eta\widehat g(\eta-kt)}\,d\eta dk,\\
    \mathcal{L}_M^{\mathrm{(B1)},V}&=\sum_{\abs{\beta}\leq 1}\int_{k,\eta}\jb{\eta}\jb{k,\eta}^{\sigma_4} \pigl|D_\eta^\alpha \widehat{W[P]}(t,k,\eta)\pigr|\jb{\eta}\jb{k,\eta}^{\sigma_4}\jb{\hbar k}^{\abs{\alpha}-1}\ak\abs{\widehat w(k)}\abs{\widehat{\rho_Q}(t,k)}\pigl|{D^\beta_\eta\widehat g(\eta-kt)}\pigr|\,d\eta dk.
\end{align*}

For \(\mathcal{L}_0^{\mathrm{(B1)},V}\), observe that \(\jb{\eta} \lesssim \jb{kt} \jb{\eta - kt}\) and \(\jb{k,\eta}^{\sigma_4} \lesssim \jb{k,kt}^{\sigma_4} \jb{\eta - kt}^{\sigma_4}\). The decay assumption \eqref{wassumption} on \(w\) implies $\jb{kt} \ak \abs{\widehat{w}(k)} \lesssim \jb{t} \ak^{1/2}.$ Substituting these bounds gives
\begin{align*}
\abs{\mathcal{L}_0^{\mathrm{(B1)},V}}&\lesssim\jb{t}\int_{k,\eta}\jb{\eta}\jb{k,\eta}^{\sigma_4} \abs{D_\eta^\alpha \widehat{W[P]}(t,k,\eta)}\ak^{1/2}\jb{k,kt}^{\sigma_4}\abs{\widehat{\rho_Q}(t,k)}\jb{\eta-kt}^{\sigma_4+2}\abs{D^\alpha_\eta\widehat g(\eta-kt)}\,d\eta dk\\
    &\lesssim\jb{t}\norm{\jb{\nabla_v}W[P]}_{H^{\sigma_4}_M}\norm{\ak^{1/2}\jb{k,kt}^{\sigma_4}\widehat{\rho_Q}(t,k)}_{L^2_{k}}\norm{g}_{H^{\sigma_4+2}_M}.
\end{align*}

To estimate \(\mathcal{L}_M^{\mathrm{(B1)},V}\), the full decay assumption \eqref{wassumption} is used to ensure \(\jb{k}^{M - 1/2} \abs{\widehat{w}(k)} \lesssim 1\), which allows the bound
\[
\jb{kt} \jb{\hbar k}^{\abs{\alpha} - 1} \abs{k}\abs{\widehat w(k)} \lesssim \jb{t} \jb{\hbar k} \ak^{1/2} \jb{k}^{M - 1/2}\abs{\widehat w(k)} \lesssim\jb{t} \jb{\hbar k} \ak^{1/2},
\]
valid uniformly for \(\hbar \in (0,1]\). Applying Cauchy--Schwarz then gives
\begin{align*}
    \mathcal{L}_M^{\mathrm{(B1)},V}\lesssim\jb{t}\norm{\jb{\nabla_v}W[P]}_{H^{\sigma_4}_M}\norm{\jb{\hbar k}\ak^{1/2}\jb{k,kt}^{\sigma_4}\widehat{\rho_Q}(t,k)}_{L^2_{k}}\norm{g}_{H^{\sigma_4+1}_M}.
\end{align*}

To control the nonlinear contribution \(\mathcal{N}^{\mathrm{(B1)},V}\), Lemma~\ref{etalemma} is used to isolate the highest-order term in which all \(D_\eta^\alpha\)-derivatives fall on \(\widehat{W[P]}\), yielding the decomposition $\mathcal{N}^{\mathrm{(B1)},V}= \mathcal{N}_0^{\mathrm{(B1)},V}+\mathcal{N}_M^{\mathrm{(B1)},V}.$ The leading term will be treated explicitly below. The remainder is controlled using Lemma~\ref{etalemma}. Explicitly,
\begin{align*}
\mathcal{N}^{\mathrm{(B1)},V}_0&=\Im\int_{k,\eta}\jb{\eta}\jb{k,\eta}^{\sigma_4}\overline{D_\eta^\alpha \widehat{W[P]}(t,k,\eta)}\jb{\eta}\jb{k,\eta}^{\sigma_4}\\
&\hspace{0.25cm}\times\sbr{\frac{\hbar^{-1}}{(2\pi)^d}\int_l\widehat{w}(l)\widehat{\rho_Q}(t,\ell )\br{e^{-i\frac{\hbar}{2}\ell \cdot(\eta-kt)}-e^{i\frac{\hbar}{2}\ell \cdot(\eta-kt)}}D_\eta^\alpha\widehat{W[P]}(t,k-\ell ,\eta-\ell t)\,d\ell }dkd\eta\end{align*} and
\begin{align*}
    \abs{\mathcal{N}_M^{\mathrm{(B1)},V}}&\lesssim\sum_{\abs{\beta}\leq \aal-1}\int_{k,\eta,\ell }\jb{\eta}\jb{k,\eta}^{\sigma_4}\pigl|{D_\eta^\alpha \widehat{W[P]}(t,k,\eta)}\pigr|\jb{\eta}\jb{k,\eta}^{\sigma_4}\\
    &\hspace{2.5cm}\times\abs{\widehat{w}(\ell )}\abs{\ell }\jb{\hbar\ell }^{\aal-1}\abs{\widehat{\rho_Q}(t,\ell )}\pigl|{D_\eta^\beta\widehat{W[P]}(t,k-\ell ,\eta-\ell t)}\pigr|\,d\ell dkd\eta.
\end{align*}

To estimate \(\mathcal{N}_0^{\mathrm{(B1)},V}\), a cancellation is used by subtracting an integral that evaluates to zero:
\begin{equation}\label{N0B1}
\begin{split}
\mathcal{N}_0^{\mathrm{(B1)},V}&=\Im\int_{k,\eta}\jb{\eta}\jb{k,\eta}^{\sigma_4}\overline{D_\eta^\alpha \widehat{W[P]}(t,k,\eta)}\br{\jb{\eta}\jb{k,\eta}^{\sigma_4}-\jb{\eta-\ell t}\jb{k-\ell ,\eta-\ell t}^{\sigma_4}}\\
&\hspace{0.25cm}\times\sbr{\frac{\hbar^{-1}}{(2\pi)^d}\int_\ell \widehat{w}(\ell )\widehat{\rho_Q}(t,\ell )\br{e^{-i\frac{\hbar}{2}\ell \cdot(\eta-kt)}-e^{i\frac{\hbar}{2}\ell \cdot(\eta-kt)}}D_\eta^\alpha\widehat{W[P]}(t,k-\ell ,\eta-\ell t)\,d\ell }dkd\eta.
\end{split}
\end{equation}
The integral that has been subtracted is now shown to vanish. Observe that the Wigner transform of \(P\) satisfies the equation\begin{align}\label{Wphysical}
\begin{mycases}
\p_tW[P](t,z,v)=W[{\mathrm{L}_P}](t,z,v)+W[{\mathrm{N}_P}](t,z,v)\\
    W[{\mathrm{L}_P}](t,z,v)=-\frac{i\hbar^{-1}}{(2\pi)^d}\int_ke^{ik\cdot(z+vt)}\widehat w(k)\widehat{\rho_Q}(t,k)\br{g(v-\mytextfrac{\hbar}{2}k)-g(v+\mytextfrac{\hbar}{2}k)}\,dk,\\
    W[{\mathrm{N}_P}](t,z,v)=-\frac{i\hbar^{-1}}{(2\pi)^d}\int_\ell  e^{i\ell \cdot(z+vt)}\widehat w(\ell )\widehat{\rho_Q}(t,\ell )\\
    \hspace{4.25cm}\times\br{W[P](t,z+\mytextfrac{\hbar}{2}\ell t,v-\mytextfrac{\hbar}{2}\ell )-W[P](t,z-\mytextfrac{\hbar}{2}\ell t,v+\mytextfrac{\hbar}{2}\ell )}\,d\ell.
\end{mycases}
\end{align}
Using Plancherel's theorem, the subtracted integral corresponds to an expression on the physical side of the form:
\begin{align*}
&\int_{k,\eta}\jb{\eta}\jb{k,\eta}^{\sigma_4}\overline{D_\eta^\alpha \widehat{W[P]}(t,k,\eta)}\jb{\eta-\ell t}\jb{k-\ell ,\eta-\ell t}^{\sigma_4}\\
&\hspace{0.35cm}\times\Bigg[\frac{\hbar^{-1}}{(2\pi)^d}\int_\ell \widehat{w}(\ell )\widehat{\rho_Q}(t,\ell )\br{e^{-i\frac{\hbar}{2}\ell \cdot(\eta-kt)}-e^{i\frac{\hbar}{2}\ell \cdot(\eta-kt)}}D_\eta^\alpha\widehat{W[P]}(t,k-\ell ,\eta-\ell t)\,d\ell \Bigg]dkd\eta\\
&=\int_{z,v}\overline{\jb{\nabla_v}\jb{\nabla}^{\sigma_4}\br{(-iv)^\alpha W[P]}(t,z,v)}\hbar^{-1}\int_\ell  e^{i\ell \cdot(z+vt)}\widehat w(\ell )\widehat{\rho_Q}(t,\ell )\\
&\hspace{0.35cm}\times\Big[\br{\jb{\nabla_v}\jb{\nabla}^{\sigma_4}((-iv)^\alpha W[P])}(t,z+\mytextfrac{\hbar}{2}\ell t,v-\mytextfrac{\hbar}{2}\ell )\\
&\hspace{5cm}-\br{\jb{\nabla_v}\jb{\nabla}^{\sigma_4}((-iv)^\alpha W[P])}(t,z-\mytextfrac{\hbar}{2}\ell t,v+\mytextfrac{\hbar}{2}\ell )\Big]\,d\ell dzdv,
\end{align*}where here $\jb{\nabla}=\jb{\nabla_z,\nabla_v}$.
To see that this integral has zero imaginary part, define \[f(z,v)=\jb{\nabla_v}\jb{\nabla_z,\nabla_v}^{\sigma_4}\br{(-iv)^\alpha W[P]}(t,z,v),\] so that the integral becomes \begin{align*}
    I:=\hbar^{-1}\int_{z,v,\ell }\overline{f(z,v)}e^{i\ell \cdot(z+tv)}\widehat{\rho_Q}(t,\ell )\widehat w(\ell )\sbr{f(z-\mytextfrac{\hbar}{2}\ell t,v+\mytextfrac{\hbar}{2}\ell )-f(z+\mytextfrac{\hbar}{2}\ell t,v-\mytextfrac{\hbar}{2}\ell )}\,d\ell dzdv.
\end{align*} Apply the change of variables \(z \mapsto z + \tfrac{\hbar}{2} \ell t\), \(v \mapsto v - \tfrac{\hbar}{2} \ell\) in the first term and \(z \mapsto z - \tfrac{\hbar}{2} \ell t\), \(v \mapsto v + \tfrac{\hbar}{2} \ell\) in the second. Then\begin{align*}
    I&=\hbar^{-1}\int_{z,v,\ell }\overline{f(z,v)}e^{i\ell \cdot(z+tv)}\widehat{\rho_Q}(t,\ell )\widehat w(\ell )\sbr{f(z-\mytextfrac{\hbar}{2}\ell t,v+\mytextfrac{\hbar}{2}\ell )-f(z+\mytextfrac{\hbar}{2}\ell t,v-\mytextfrac{\hbar}{2}\ell )}\,d\ell dzdv\\
&=\hbar^{-1}\int_{z,v,\ell }\overline{\br{f(z+\mytextfrac{\hbar}{2}t\ell ,v-\frac{\hbar}{2}\ell )-f(z-\frac{\hbar}{2}\ell t,v+\mytextfrac{\hbar}{2}\ell )}}e^{i\ell \cdot(z+tv)}\widehat{\rho_Q}(t,\ell )\widehat w(\ell )f(z,v)\,d\ell dvdz\\
    &=\hbar^{-1}\int_{z,v,\ell }\overline{\br{f(z-\mytextfrac{\hbar}{2}t\ell ,v+\frac{\hbar}{2}\ell )-f(z+\frac{\hbar}{2}\ell t,v-\mytextfrac{\hbar}{2}\ell )}}e^{-i\ell \cdot(z+tv)}\widehat{\rho_Q}(t,-\ell )\widehat w(-\ell )f(z,v)\,d\ell dvdz\\
    &=\hbar^{-1}\overline{\int_{z,v,\ell }\overline{f(z,v)}e^{i\ell \cdot(z+tv)}\widehat{\rho_Q}(t,\ell )\widehat w(\ell )\sbr{f(z-\mytextfrac{\hbar}{2}\ell t,v+\mytextfrac{\hbar}{2}\ell )-f(z+\mytextfrac{\hbar}{2}\ell t,v-\mytextfrac{\hbar}{2}\ell )}\,d\ell dzdv}\\
    &=\overline{I},
\end{align*}
where the penultimate step uses that \(Q\) is self-adjoint, so both \(\widehat{\rho_Q}(t,\ell)\) and \(\widehat{w}(\ell)\) are real-valued and satisfy $\overline{\widehat w(-\ell )}=\widehat w(\ell )$ and $\overline{\widehat{\rho_Q}(t,-\ell )}=\widehat{\rho_Q}(t,\ell )$. Hence, \(I = \overline{I}\), and so \(\Im I = 0\). 

Therefore, $\mathcal{N}_0^{\mathrm{(B1)},V}$ can be controlled using \eqref{N0B1}:
\begin{align*}
  \abs{\mathcal{N}_0^{\mathrm{(B1)},V}}&\lesssim  \int_{k,\eta,\ell }\jb{\eta}\jb{k,\eta}^{\sigma_4}\abs{D_\eta^\alpha \widehat{W[P]}(t,k,\eta)}\abs{\jb{\eta}\jb{k,\eta}^{\sigma_4}-\jb{\eta-\ell t}\jb{k-\ell ,\eta-\ell t}^{\sigma_4}}\\
&\hspace{1.5cm}\times\abs{\widehat{w}(\ell )}\abs{\ell }\abs{\widehat{\rho_Q}(t,\ell )}\abs{\eta-kt}\abs{D_\eta^\alpha\widehat{W[P]}(t,k-\ell ,\eta-\ell t)}\,d\ell dkd\eta.
\end{align*}
To estimate the difference of weights, the Mean Value Theorem gives\begin{align*}
    &\abs{\jb{\eta}\jb{k,\eta}^{\sigma_4}-\jb{\eta-\ell t}\jb{k-\ell ,\eta-\ell t}^{\sigma_4}}\\
    &\leq\vphantom{\int_A}\abs{\jb{\eta}-\jb{\eta-\ell t}}\jb{k,\eta}^{\sigma_4}+\jb{\eta-\ell t}\abs{\jb{k,\eta}^{\sigma_4}-\jb{k-\ell ,\eta-\ell t}^{\sigma_4}}\\
    &\lesssim\jb{\ell t}\jb{k,\eta}^{\sigma_4}+\jb{\eta-\ell t}\abs{\ell ,\ell t}\sup_{\abs{\ell '}\leq \abs{\ell }}\jb{k-\ell ',\eta-\ell 't}^{\sigma_4-1}\\
    &\lesssim\jb{\ell t}\br{\jb{\ell ,\ell t}^{\sigma_4}+\jb{k-\ell ,\eta-\ell t}^{\sigma_4}}+\jb{\eta-\ell t}\abs{\ell ,\ell t}\sup_{\abs{\ell '}\leq \abs{\ell }}\br{\jb{k-\ell ,\eta-\ell t}^{\sigma_4-1}+\jb{\ell -\ell ',(\ell -\ell ')t}^{\sigma_4-1}}\\
    &=\jb{\ell t}\jb{\ell ,\ell t}^{\sigma_4}+\jb{\eta-\ell t}\abs{\ell ,\ell t}\jb{\ell ,\ell t}^{\sigma_4-1}+\jb{\ell t}\jb{k-\ell ,\eta-\ell t}^{\sigma_4}+\jb{\eta-\ell t}\abs{\ell ,\ell t}\jb{k-\ell ,\eta-\ell t}^{\sigma_4-1}.
\end{align*}
This leads to the decomposition \[\abs{\mathcal{N}_0^{\mathrm{(B1)},V}}\lesssim \mathcal{R}_{0,1}^{\mathrm{(B1)},V}+\mathcal{R}_{0,2}^{\mathrm{(B1)},V}+\mathcal{T}_{0,1}^{\mathrm{(B1)},V}+\mathcal{T}_{0,2}^{\mathrm{(B1)},V},\] corresponding to the four terms above.

For \(\mathcal{R}_{0,1}^{\mathrm{(B1)},V}\), note that \(\abs{\eta - kt} \lesssim \jb{t} \jb{k - \ell, \eta - \ell t}\) and the bound \(\jb{\ell t} \abs{\ell} \abs{\widehat{w}(\ell)} \lesssim \jb{t} \abs{\ell}^{1/2}\). These yield the bound
\begin{align*}
    \mathcal{R}_{0,1}^{\mathrm{(B1)},V}
    &\lesssim\jb{t}^2\int_{k,\eta,\ell }\jb{\eta}\jb{k,\eta}^{\sigma_4}\abs{D_\eta^\alpha \widehat{W[P]}(t,k,\eta)}\jb{\ell ,\ell t}^{\sigma_4}\abs{\ell }^{1/2}\abs{\widehat{\rho_Q}(t,\ell )}\\
    &\hspace{5.5cm}\times\jb{k-\ell ,\eta-\ell t}\abs{D_\eta^\alpha\widehat{W[P]}(t,k-\ell ,\eta-\ell t)}\,d\ell dkd\eta
\end{align*}
Lemma~\ref{Lemma2.9}\ref{Lemma2.9b} then implies
\begin{align*}
    \mathcal{R}_{0,1}^{\mathrm{(B1)},V}\lesssim\jb{t}^2\norm{\jb{\nabla_v}W[P]}_{H^{\sigma_4}_M}\norm{\ak^{1/2}\jb{k,kt}^{\sigma_4}\widehat{\rho_Q}(t,k)}_{L^2_k}\int_k\norm{\jb{k,\eta}D^\alpha_\eta\widehat{W[P]}(t,k,\eta)}_{L^2_\eta}\,dk.
\end{align*}
The remaining integral is bounded using Cauchy--Schwarz in $k$:
\begin{align*}
    \int_k\norm{\jb{k,\eta}D^\alpha_\eta\widehat{W[P]}(t,k,\eta)}_{L^2_\eta}\,dk\lesssim\norm{\ak^\delta\jb{k}^{d/2}\jb{k,\eta}D^\alpha_\eta\widehat{W[P]}(t,k,\eta)}_{L^2_{k,\eta}}\lesssim \sqrt{K_3}\ep.
\end{align*}
The second reaction term \(\mathcal{R}_{0,2}^{\mathrm{(B1)},V}\) is estimated similarly. It follows that
\begin{align*}
    \mathcal{R}_{0,1}^{\mathrm{(B1)},V}+\mathcal{R}_{0,2}^{\mathrm{(B1)},V}\lesssim\sqrt{K_3}\ep\jb{t}\norm{\jb{\nabla_v}W[P]}_{H^{\sigma_4}_M}\norm{\ak^{1/2}\jb{k,kt}^{\sigma_4}\widehat{\rho_Q}(t,k)}_{L^2_k}.
\end{align*}

The transport term $\mathcal{T}_{0,1}^{\mathrm{(B1)},V}$ is estimated using the inequality $\abs{\eta - kt} \lesssim \jb{t(k - \ell), \eta - \ell t}$, together with Lemma~\ref{Lemma2.9}\ref{Lemma2.9a}:
\begin{align*}
    \mathcal{T}_{0,1}^{\mathrm{(B1)},V}&\lesssim\int_{k,\eta,\ell }\jb{\eta}\jb{k,\eta}^{\sigma_4}\abs{D_\eta^\alpha \widehat{W[P]}(t,k,\eta)}\jb{\ell t}\jb{k-\ell ,\eta-\ell t}^{\sigma_4}\\
    &\hspace{3cm}\times\abs{\widehat{w}(\ell )}\abs{\ell }\abs{\widehat{\rho_Q}(t,\ell )}\abs{\eta-kt}\abs{D_\eta^\alpha\widehat{W[P]}(t,k-\ell ,\eta-\ell t)}\,d\ell dkd\eta\\
    &\lesssim\norm{\jb{\nabla_v}W[P]}_{H^{\sigma_4}_M}\norm{\jb{t\nabla_z,\nabla_v}W[P]}_{H^{\sigma_4}_M}\int_l\jb{lt}\abs{\ell }\abs{\widehat{\rho_Q}(t,\ell )}\abs{\widehat w(\ell )}\,d\ell \\
    &\lesssim \frac{\sqrt{K_5}\ep}{\jb{t}^{d+1}}\norm{\jb{\nabla_v}W[P]}_{H^{\sigma_4}_M}\norm{\jb{t\nabla_z,\nabla_v}W[P]}_{H^{\sigma_4}_M},
\end{align*}where the last step uses the decay assumption \eqref{wassumption} and the bootstrap assumption \eqref{B5}, provided that $\sigma_1 > d + 2$. The second transport term $\mathcal{T}_{0,2}^{\mathrm{(B1)},V}$ is treated analogously.

For $\mathcal{N}_M^{\mathrm{(B1)},V}$, the weight $\jb{\eta}\jb{k,\eta}^{\sigma_4}$ is decomposed into transport and reaction components using \begin{align*}
    \jb{\eta}\jb{k,\eta}^{\sigma_4}\lesssim\br{\vphantom{\jb{\cdot}_1^{\sigma}}\jb{\ell t}+\jb{\eta-\ell t}}\jb{\ell ,\ell t}^{\sigma_4}+\br{\vphantom{\jb{\cdot}_1^{\sigma}}\jb{\ell t}+\jb{\eta-\ell t}}\jb{k-\ell ,\eta-\ell t}^{\sigma_4},
\end{align*}
which leads to the bound \[\abs{\mathcal{N}_M^{\mathrm{(B1)},V}}\lesssim \mathcal{R}_{M,1}^{\mathrm{(B1)},V}+\mathcal{R}_{M,2}^{\mathrm{(B1)},V}+\mathcal{T}_{M,1}^{\mathrm{(B1)},V}+\mathcal{T}_{M,2}^{\mathrm{(B1)},V}.\] To estimate the reaction terms, apply assumption \eqref{wassumption} and proceed as in the estimate for $\mathcal{R}_{0,1}^{\mathrm{(B1)},V}$:
\begin{align*}
    \mathcal{R}_{M,1}^{\mathrm{(B1)},V}+ \mathcal{R}_{M,2}^{\mathrm{(B1)},V}&\lesssim\sqrt{K_3}\ep\jb{t}\norm{\jb{\nabla_v}W[P]}_{H_M^{\sigma_4}}\norm{\jb{\hbar k}\ak^{1/2}\jb{k,kt}^{\sigma_4}\widehat{\rho_Q}(t,k)}_{L^2_k}.
\end{align*}The transport terms $\mathcal{T}_{M,1}^{\mathrm{(B1)},V}$ and $\mathcal{T}_{M,2}^{\mathrm{(B1)},V}$ are treated analogously to $\mathcal{T}_{0,1}^{\mathrm{(B1)},V}$, yielding
\begin{align*}
    \mathcal{T}_{M,1}^{\mathrm{(B1)},V}+\mathcal{T}_{M,2}^{\mathrm{(B1)},V}&\lesssim\frac{\sqrt{K_5}\ep}{\jb{t}^{d+1}}\norm{\jb{\nabla_v}W[P](t)}_{H^{\sigma_4}_M}^2.
\end{align*}

\subsubsection{Conclusion of the velocity derivative}
The preceding estimates imply
\begin{align*}
    \hspace{-0.2cm}\half\frac{d}{dt}\norm{\jb{\nabla_v}{W[P]}(t)}_{H^{\sigma_4}_M}^2&\lesssim\jb{t}\norm{\jb{\nabla_v}{W[P](t)}}_{H^{\sigma_4}}\norm{\jb{\hbar k}\ak^{1/2}\jb{k,kt}^{\sigma_4}\widehat{\rho_Q}(t,k)}_{L^2_{k}}\\
    &\hspace{0.5cm}+\sqrt{K_3}\ep\jb{t}^2\norm{\jb{\nabla_v}W[P](t)}_{H^{\sigma_4}_M}\norm{\jb{\hbar k}\ak^{1/2}\jb{k,kt}^{\sigma_4}\widehat{\rho_Q}(t,k)}_{L^2_k}\\
    &\hspace{0.5cm}+\frac{\sqrt{K_5}\ep}{\jb{t}^{d+1}}\norm{\jb{\nabla_v}W[P](t)}_{H^{\sigma_4}_M}\norm{\jb{t\nabla_z,\nabla_v}W[P](t)}_{H^{\sigma_4}_M}.
    \end{align*}
Dividing through and estimating the derivative of the norm gives\begin{align*}
    \frac{d}{dt}&\norm{\jb{\nabla_v}W[P](t)}_{H^{\sigma_4}_M}\\
    &\lesssim\br{\jb{t}+\sqrt{K_3}\ep\jb{t}^2}\norm{\jb{\hbar k}\ak^{1/2}\jb{k,kt}^{\sigma_4}\widehat{\rho_Q}(t,k)}_{L^2_{k}}+\frac{\sqrt{K_5}\ep}{\jb{t}^{d+1}}\norm{\jb{t\nabla_z,\nabla_v}W[P](t)}_{H^{\sigma_4}_M}.
\end{align*}
Integration in time, using the bootstrap assumptions \eqref{B1} and \eqref{B2}, yields
\begin{align*}
    &\norm{\jb{\nabla_v}W[P](t)}_{H^{\sigma_4}_M}-\norm{\jb{\nabla_v}W[\Qin]}_{H^{\sigma_4}_M}\\
    &\lesssim\int_0^t\br{\jb{s}+\sqrt{K_3}\ep\jb{s}^2}\norm{\jb{\hbar k}\ak^{1/2}\jb{k,ks}^{\sigma_4}\widehat{\rho_Q}(s,k)}_{L^2_{k}}\,ds+\int_0^t\frac{\sqrt{K_5}\ep}{\jb{s}^{d+1}}\norm{\jb{s\nabla_z,\nabla_v}W[P](s)}_{H^{\sigma_4}_M}\,ds\\
    &\lesssim\br{\int_0^t\br{\jb{s}+\sqrt{K_3}\ep\jb{s}^2}^2\,ds}^{1/2}\norm{\jb{\hbar k}\ak^{1/2}\jb{k,ks}^{\sigma_4}\widehat{\rho_Q}(s,k)}_{L^2_{t}([0,T^*],L^2_k)}+\sqrt{K_1K_5}\ep^2\int_0^t\frac{\jb{s}^{d-1/2}}{\jb{s}^{d+1}}\,ds\\
    &\lesssim\sqrt{K_2}\ep\br{1+\sqrt{K_3}\ep\jb{t}}\jb{t}^{3/2}+\sqrt{K_1K_5}\ep^2.
\end{align*}
Hence, there exists a constant $\widetilde K_V=\widetilde K_V(d,g,w,\sigma_i,N_0)$ such that
\begin{align}
      \hbar^{-d}\sum_{\aal\leq M}\norm{\bm{\xi}^\alpha\jb{\bm{\nabla}_x,\bm{\nabla}_\xi}^{\sigma_4}\jb{\bm{\nabla}_\xi}P(t)}_{L^2_{x,y}}^2\leq \widetilde K_V\ep^2\sbr{\sqrt{K_2}+\br{\sqrt{K_2K_3}+\sqrt{K_1K_5}\ep}}^2\jb{t}^{5}.\label{B1vconc}
\end{align}\label{concluvelocity}

\subsubsection{Estimate on the spatial derivative}
Let $\aal\leq M$ be a multi-index. As in the velocity case, an energy estimate yields\begin{align*}
    \frac{1}{2}\frac{d}{dt}\norm{\jb{\nabla_z}v^\alpha{W[P]}}_{H^{\sigma_4}}^2=\mathcal{L}^{\mathrm{(B1)},Z} + \mathcal{N}^{\mathrm{(B1)},Z},
\end{align*}where the linear and nonlinear terms are defined analogously. The linear contribution is controlled as in Section~\ref{tricksection}, giving
\begin{align*}
    \mathcal{L}^{\mathrm{(B1)},Z}\lesssim\norm{\jb{\nabla_v}{W[P]}}_{H^{\sigma_4}_M}\norm{\jb{\hbar k}\ak^{1/2}\jb{k,kt}^{\sigma_4}\widehat{\rho_Q}(t,k)}_{L^2_{k}}\norm{g}_{H^{\sigma_4+1}_M}.
\end{align*}
For the nonlinear term, the decomposition $\mathcal N^{\mathrm{(B1)},Z}=\mathcal{N}_0^{\mathrm{(B1)},Z}+\mathcal{N}_M^{\mathrm{(B1)},Z}$ is used, following the same approach as in Section~\ref{tricksection}, with
\begin{align*}
    \abs{\mathcal{N}_0^{\mathrm{(B1)},Z}}&\lesssim\int_{k,\eta,\ell }\jb{k}\jb{k,\eta}^{\sigma_4}\abs{D_\eta^\alpha \widehat{W[P]}(t,k,\eta)}\abs{\jb{k}\jb{k,\eta}^{\sigma_4}-\jb{k-\ell }\jb{k-\ell ,\eta-\ell t}^{\sigma_4}}\\
&\hspace{3cm}\times\abs{\widehat{w}(\ell )}\abs{\ell }\abs{\widehat{\rho_Q}(t,\ell )}\abs{\eta-kt}\abs{D_\eta^\alpha\widehat{W[P]}(t,k-\ell ,\eta-\ell t)}\,d\ell dkd\eta,\\
\abs{\mathcal{N}_M^{\mathrm{(B1)},Z}}&\lesssim\sum_{\abs{\beta}\leq \aal-1}\int_{k,\eta,\ell }\jb{k}\jb{k,\eta}^{\sigma_4}\abs{D_\eta^\alpha \widehat{W[P]}(t,k,\eta)}\jb{k}\jb{k,\eta}^{\sigma_4}\\
&\hspace{3cm}\times\abs{\widehat{w}(\ell )}\abs{\ell }\jb{\hbar\ell}^{\aal-1}\abs{\widehat{\rho_Q}(t,\ell )}\abs{D_\eta^\beta\widehat{W[P]}(t,k-\ell ,\eta-\ell t)}\,d\ell dkd\eta.
\end{align*}
To control $\mathcal{N}_0^{\mathrm{(B1)},Z}$, the Mean Value Theorem yields
\begin{align*}&\abs{\jb{k}\jb{k,\eta}^{\sigma_4}-\jb{k-\ell }\jb{k-\ell ,\eta-\ell t}^{\sigma_4}}\\
    &\lesssim\jb{\ell }\jb{\ell ,\ell t}^{\sigma_4}+\jb{k-\ell }\abs{\ell ,\ell t}\jb{\ell ,\ell t}^{\sigma_4-1}+\jb{\ell }\jb{k-\ell ,\eta-\ell t}^{\sigma_4}+\jb{k-\ell }\abs{\ell ,\ell t}\jb{k-\ell ,\eta-\ell t}^{\sigma_4-1},
\end{align*}which splits the estimate into four contributions: $\abs{\mathcal{N}_0^{\mathrm{(B1)},Z}}\lesssim \mathcal{R}_{0,1}^{\mathrm{(B1)},Z}+\mathcal{R}_{0,2}^{\mathrm{(B1)},Z}+\mathcal{T}_{0,1}^{\mathrm{(B1)},Z}+\mathcal{T}_{0,2}^{\mathrm{(B1)},Z}$. These terms are analogous to those in Section~\ref{tricksection}, and are treated using the same estimates, except that the leading-order reaction term $\mathcal{R}_{0,1}^{\mathrm{(B1)},Z}$ yields one less power of $\jb{t}$. In conclusion, provided $\sigma_3 > d + 2$, the estimate
\begin{align*}
    \abs{\mathcal{N}_0^{\mathrm{(B1)},Z}}&\lesssim\frac{\sqrt{K_5}\ep}{\jb{t}^{d+1}}\norm{\jb{\nabla_z}W[P]}_{H^{\sigma_4}_M}\norm{\jb{t\nabla_z,\nabla_v}W[P]}_{H^{\sigma_4}_M}\\
    &\hspace{3cm}+\sqrt{K_3}\ep\jb{t}\norm{\jb{\nabla_z}W[P]}_{H^{\sigma_4}_M}\norm{\ak^{1/2}\jb{k,kt}^{\sigma_4}\widehat{\rho_Q}(t,k)}_{L^2_k}
\end{align*}follows. The same argument applies to $\mathcal{N}_M^{\mathrm{(B1)},Z}$.

\subsubsection{Conclusion of the spatial derivative}
Proceeding as in Section~\ref{concluvelocity}, it follows that
\begin{align*}
    \half\frac{d}{dt}\norm{\jb{\nabla_z}{W[P]}(t)}_{H^{\sigma_4}_M}^2&\lesssim\norm{\jb{\nabla_z}{W[P](t)}}_{H^{\sigma_4}}\norm{\jb{\hbar k}\ak^{1/2}\jb{k,kt}^{\sigma_4}\widehat{\rho_Q}(t,k)}_{L^2_{k}}\\
    &\hspace{0.25cm}+\sqrt{K_3}\ep\jb{t}\norm{\jb{\nabla_z}W[P](t)}_{H^{\sigma_4}_M}\norm{\jb{\hbar k}\ak^{1/2}\jb{k,kt}^{\sigma_4}\widehat{\rho_Q}(t,k)}_{L^2_k}\\
    &\hspace{0.25cm}+\frac{\sqrt{K_5}\ep}{\jb{t}^{d+1}}\norm{\jb{\nabla_z}W[P](t)}_{H^{\sigma_4}_M}\norm{\jb{t\nabla_z,\nabla_v}W[P](t)}_{H^{\sigma_4}_M}.
    \end{align*}
This implies \begin{align*}
    \frac{d}{dt}&\norm{\jb{\nabla_z}W[P](t)}_{H^{\sigma_4}_M}\\
    &\lesssim\br{1+\sqrt{K_3}\ep\jb{t}}\norm{\jb{\hbar k}\ak^{1/2}\jb{k,kt}^{\sigma_4}\widehat{\rho_Q}(t,k)}_{L^2_{k}}+\frac{\sqrt{K_5}\ep}{\jb{t}^{d+1}}\norm{\jb{t\nabla_z,\nabla_v}W[P](t)}_{H^{\sigma_4}_M}.
\end{align*}
Integrating in time and applying the bootstrap assumptions \eqref{B1} and \eqref{B2} yields
\begin{align*}
    &\norm{\jb{\nabla_z}W[P](t)}_{H^{\sigma_4}_M}-\norm{\jb{\nabla_z}W[\Qin]}_{H^{\sigma_4}_M}\\
    &\lesssim\int_0^t\br{1+\sqrt{K_3}\ep\jb{s}}\norm{\jb{\hbar k}\ak^{1/2}\jb{k,ks}^{\sigma_4}\widehat{\rho_Q}(s,k)}_{L^2_{k}}\,ds+\int_0^t\frac{\sqrt{K_5}\ep}{\jb{s}^{d+1}}\norm{\jb{s\nabla_z,\nabla_v}W[P](s)}_{H^{\sigma_4}_M}\,ds\\
    &\lesssim\br{\int_0^t\br{1+\sqrt{K_3}\ep\jb{s}}^2\,ds}^{1/2}\norm{\jb{\hbar k}\ak^{1/2}\jb{k,ks}^{\sigma_4}\widehat{\rho_Q}(s,k)}_{L^2_{t}([0,T^*],L^2_k)}+\sqrt{K_1K_5}\ep^2\int_0^t\frac{\jb{s}^{d-1/2}}{\jb{s}^{d+1}}\,ds\\
    &\lesssim\sqrt{K_2}\ep\br{1+\sqrt{K_3}\ep\jb{t}}\jb{t}+\sqrt{K_1K_5}\ep^2.
\end{align*}
Hence, there exists a constant $\widetilde K_Z=\widetilde K_Z(d,g,w,\sigma_i,N_0)$ such that
\begin{align}
      \hbar^{-d}\sum_{\aal\leq M}\norm{\bm{\xi}^\alpha\jb{\bm{\nabla}_x,\bm{\nabla}_\xi}^{\sigma_4}\jb{\bm{\nabla}_x}P(t)}_{L^2_{x,y}}^2\leq\widetilde K_Z\ep^2\sbr{\sqrt{K_2}+\ep\br{\sqrt{K_2K_3}+\sqrt{K_1K_5}}}^2\jb{t}^{4}.\label{B1zconc}
\end{align}

\subsubsection{Conclusion of bootstrap improvement \eqref{I1}}
Choose $K_1:=2\max\set{\widetilde K_V,\widetilde K_Z}\br{\sqrt{K_2}+1}^2$ and impose $\ep\leq (\sqrt{K_2K_3}+\sqrt{K_1K_5})^{-1}$. Then, combining the bounds \eqref{B1vconc} and \eqref{B1zconc}, it follows that
\begin{align*}
     \hbar^{-d}\sum_{\aal\leq M}\norm{\bm{\xi}^\alpha\jb{\bm{\nabla}_x,\bm{\nabla}_\xi}^{\sigma_4}\jb{\bm{\nabla}_\xi}P(t)}_{L^2_{x,y}}^2&\leq \frac{1}{2}K_1\ep^2\jb{t}^5
\intertext{and}
     \hbar^{-d}\sum_{\aal\leq M}\norm{\bm{\xi}^\alpha\jb{\bm{\nabla}_x,\bm{\nabla}_\xi}^{\sigma_4}\jb{\bm{\nabla}_x}P(t)}_{L^2_{x,y}}^2&\leq \frac{1}{2} K_1\ep^2\jb{t}^4.
\end{align*}Therefore, for $d\geq 3$, the bound
\begin{align*}
    \hbar^{-d}\sum_{\aal\leq M}\norm{\bm{\xi}^\alpha\jb{\bm{\nabla}_x,\bm{\nabla}_\xi}^{\sigma_4}\jb{t\bm{\nabla}_x,\bm{\nabla}_\xi}P(t)}_{L^2_{x,y}}^2\leq 2K_1\jb{t}^{5}\ep^2\leq 2K_1\jb{t}^{2d-1}\ep^2
\end{align*}holds, completing the proof of \eqref{I1}.

\subsection{Bootstrap improvement \eqref{I3}}\label{sectionb3}
This section establishes the bootstrap improvement \eqref{I3} by employing the equivalence
\begin{align*}
    \sum_{\aal\leq M}\norm{\jb{\bm{\nabla}_x,\bm{\nabla}_\xi}^{\sigma_3}\abs{\bm{\nabla}_x}^\delta \br{\bm{\xi}^\alpha P(t)}}_{\mathcal L^2}^2=(2\pi)^d\sum_{\aal\leq M}\norm{\abs{\nabla_z}^\delta\jb{\nabla_z,\nabla_v}^{\sigma_3}\br{v^\alpha W[P]}}_{L^2}^2.
\end{align*}
Fix a multi-index \(\aal \leq M\). Differentiating one of the terms on the right-hand side yields
\begin{align*}
    \frac{1}{2}\frac{d}{dt}&\norm{\abs{\nabla_z}^\delta\jb{\nabla_z,\nabla_v}^{\sigma_3}(v^\alpha W[P])}_{L^2}^2\\
    &\hspace{1cm}=\Re\int_{k,\eta}\abs{k}^\delta\jb{k,\eta}^{\sigma_3} \overline{D_\eta^\alpha \widehat{W[P]}(t,k,\eta)}\abs{k}^\delta\jb{k,\eta}^{\sigma_3} D_\eta^\alpha\widehat{W[{\mathrm{L}_P}]}(t,k,\eta)\,d\eta dk\\
    &\hspace{1.3cm}+\Re \int_{k,\eta}\abs{k}^\delta\jb{k,\eta}^{\sigma_3} \overline{D_\eta^\alpha \widehat{W[P]}(t,k,\eta)}\abs{k}^\delta\jb{k,\eta}^{\sigma_3} D_\eta^\alpha\widehat{W[{\mathrm{N}_P}]}(t,k,\eta)\,d\eta dk,
\end{align*}where $\widehat{W[{\mathrm{L}_P}]}(t,k,\eta)$ and $\widehat{W[{\mathrm{N}_P}]}(t,k,\eta)$ are defined in \eqref{WL}. Denote the linear and nonlinear contributions by \(\mathcal{L}^{\mathrm{(B3)}}\) and \(\mathcal{N}^{\mathrm{(B3)}}\), respectively. 

To estimate $\mathcal{L}^{\mathrm{(B3)}}$, Lemma~\ref{etalemma} is applied to extract the leading-order contribution from the remainder terms. This yields $\abs{\mathcal{L}^{\mathrm{(B3)}}}\lesssim\mathcal{L}_0^{\mathrm{(B3)}} + \mathcal{L}_M^{\mathrm{(B3)}}$. For $\mathcal{L}_0^{\mathrm{(B3)}}$, the inequality $\jb{k,\eta}^{\sigma_3}\lesssim\jb{k,kt}^{\sigma_3}\jb{\eta-kt}^{\sigma_3}$ is used, along with the estimate\begin{align*}
    \jb{k,kt}^{\sigma_3}\ak^{1+\delta}=\ak^{1/2}\jb{k,kt}^{\sigma_4}\frac{\ak^{1/2+\delta}}{\jb{k,kt}^{\sigma_4-\sigma_3}}\leq \ak^{1/2}\jb{k,kt}^{\sigma_4}\frac{\ak^{1/2+\delta}}{\abs{k,kt}^{1/2+\delta}}\leq \frac{\ak^{1/2}\jb{k,kt}^{\sigma_4}}{t^{1/2+\delta}} ,
\end{align*}valid under the assumption $\sigma_4-\sigma_3\geq1/2+\delta$. Consequently, 
\begin{align*}
    \mathcal{L}_0^{\mathrm{(B3)}}&\lesssim\frac{1}{\jb{t}^{1/2+\delta}}\int_{k,\eta}\abs{k}^\delta\jb{k,\eta}^{\sigma_3} \abs{D_\eta^\alpha \widehat{W[P]}(t,k,\eta)}\abs{k}^{1/2}\jb{k,kt}^{\sigma_4} \abs{\widehat{\rho_Q}(t,k)}\jb{\eta-kt}^{\sigma_3+1}\abs{D^\alpha_\eta\widehat g(\eta-kt)}\,d\eta dk\\
    &\lesssim\frac{1}{\jb{t}^{1/2+\delta}}\norm{\abs{\nabla_z}^\delta\jb{\nabla_z,\nabla_v}^{\sigma_3}(v^\alpha W[P])}_{L^2}\norm{\abs{k}^{1/2}\jb{k,kt}^{\sigma_4} \widehat{\rho_Q}(t,k)}_{L^2_k}\norm{g}_{H^{\sigma_3+1}_M},
\end{align*}
where Cauchy–Schwarz is applied in $\eta$ and then $k$. Similarly, under the assumption \eqref{wassumption} on $w$,
\begin{align*}
    \mathcal{L}_M^{\mathrm{(B3)}}\lesssim\frac{1}{\jb{t}^{1/2+\delta}}\norm{\abs{\nabla_z}^\delta\jb{\nabla_z,\nabla_v}^{\sigma_3}(v^\alpha W[P])}_{L^2}\norm{\jb{\hbar k}\abs{k}^{1/2}\jb{k,kt}^{\sigma_4} \widehat{\rho_Q}(t,k)}_{L^2_k}\norm{g}_{H^{\sigma_3}_M}.
\end{align*}

To estimate the nonlinear term $\mathcal{N}^{\mathrm{(B3)}}$, apply Lemma~\ref{etalemma} to decompose it as $\mathcal{N}^{\mathrm{(B3)}} = \mathcal{N}_0^{\mathrm{(B3)}} + \mathcal{N}_M^{\mathrm{(B3)}}$, where $\mathcal{N}_0^{\mathrm{(B3)}}$ captures the contribution with the highest-order derivatives. As in Section~\ref{tricksection}, a cancellation is used to isolate a difference of weights. Specifically, subtracting an integral that evaluates to zero allows the replacement\[\ak^\delta\jb{k,\eta}^{\sigma_3}\mapsto \ak^\delta\jb{k,\eta}^{\sigma_3}-\abs{k-\ell}^\delta\jb{k-\ell,\eta-\ell t}^{\sigma_3},\]which introduces a commutator structure and leads to additional decay. The resulting term satisfies
\begin{align*}
    \abs{\mathcal{N}_0^{\mathrm{(B3)}}}&\lesssim\int_{k,\eta,\ell }\abs{k}^\delta\jb{k,\eta}^{\sigma_3}\abs{D_\eta^\alpha \widehat{W[P]}(t,k,\eta)}\abs{\abs{k}^\delta\jb{k,\eta}^{\sigma_3}-\abs{k-\ell }^\delta\jb{k-\ell ,\eta-\ell t}^{\sigma_3}}\\
&\hspace{1cm}\times\abs{\widehat{w}(\ell )}\abs{\ell }\abs{\widehat{\rho_Q}(t,\ell )}\abs{\eta-kt}\abs{D_\eta^\alpha\widehat{W[P]}(t,k-\ell ,\eta-\ell t)}\,d\ell dkd\eta,
\intertext{while the lower-order term satisfies}\abs{\mathcal{N}_M^{\mathrm{(B3)}}}&\lesssim\sum_{\abs{\beta}\leq \aal-1}\int_{k,\eta,\ell }\abs{k}^\delta\jb{k,\eta}^{\sigma_3}\abs{D_\eta^\alpha \widehat{W[P]}(t,k,\eta)}\\
&\hspace{1cm}\times\abs{k}^\delta\jb{k,\eta}^{\sigma_3}\abs{\widehat{w}(\ell )}\abs{\ell }\jb{\hbar \ell }^{\aal-1}\abs{\widehat{\rho_Q}(t,\ell )}\abs{D_\eta^\beta\widehat{W[P]}(t,k-\ell ,\eta-\ell t)}\,d\ell dkd\eta.
\end{align*}

To estimate $\abs{\mathcal{N}_0^{\mathrm{(B3)}}}$, apply the Mean Value Theorem to the difference of weights:
\begin{align*}
    &\hspace{-0.5cm}\abs{\abs{k}^\delta\jb{k,\eta}^{\sigma_3}-\abs{k-\ell }^\delta\jb{k-\ell ,\eta-\ell t}^{\sigma_3}}\\
    &\hspace{0.5cm}\leq \abs{\ak^\delta-\abs{k-\ell }^\delta}\jb{k,\eta}^{\sigma_3}+\abs{k-\ell }^\delta\abs{\jb{k,\eta}^{\sigma_3}-\jb{k-\ell ,\eta-\ell t}^{\sigma_3}}\\
    &\hspace{0.5cm}\lesssim\abs{\ell }^\delta\jb{k,\eta}^{\sigma_3}+\abs{k-\ell }^\delta\abs{\ell ,\ell t}\sup_{\abs{\ell '}\leq \abs{\ell }}\jb{k-\ell ',\eta-\ell 't}^{\sigma_3-1}\\
    &\hspace{0.5cm}\lesssim(\abs{\ell }^\delta+\abs{k-\ell }^\delta)\jb{\ell ,\ell t}^{\sigma_3}+\abs{\ell }^\delta\jb{k-\ell ,\eta-\ell t}^{\sigma_3}+\abs{k-\ell }^\delta\abs{\ell ,\ell t}\jb{k-\ell ,\eta-\ell t}^{\sigma_3-1}.
\end{align*}
This gives three contributions $|\mathcal{N}_0^{\mathrm{(B3)}}|\,\lesssim \mathcal{R}_0^{\mathrm{(B3)}}+\mathcal{T}_{0,1}^{\mathrm{(B3)}}+\mathcal{T}_{0,2}^{\mathrm{(B3)}}$. For the reaction term, note that $\abs{\eta-kt}\leq \abs{\eta-\ell t}+t\abs{k-\ell }$, which motivates decomposing $\mathcal{R}_0^{\mathrm{(B3)}}$ into four parts according to whether the prefactor contains $\abs{\ell}^\delta$ or $\abs{k-\ell}^{\delta}$, and whether $\abs{\eta-kt}$ is controlled by $\abs{\eta-\ell t}$ or $t\abs{k-\ell}:$
\begin{align*}
    \mathcal{R}_0^{\mathrm{(B3)}}&\lesssim\int_{k,\eta,\ell }\abs{k}^\delta\jb{k,\eta}^{\sigma_3}\abs{D_\eta^\alpha \widehat{W[P]}(t,k,\eta)}\abs{\ell }^\delta\jb{\ell ,\ell t}^{\sigma_3}\\
&\hspace{2cm}\times\abs{\widehat{w}(\ell )}\abs{\ell }\abs{\widehat{\rho_Q}(t,\ell )}\br{\abs{\eta-\ell t}+t\abs{k-\ell }}\abs{D_\eta^\alpha\widehat{W[P]}(t,k-\ell ,\eta-\ell t)}\,d\ell dkd\eta\\
&\hspace{0.25cm}+\int_{k,\eta,\ell }\abs{k}^\delta\jb{k,\eta}^{\sigma_3}\abs{D_\eta^\alpha \widehat{W[P]}(t,k,\eta)}\abs{k-\ell }^\delta\jb{\ell ,\ell t}^{\sigma_3}\\
&\hspace{2cm}\times\abs{\widehat{w}(\ell )}\abs{\ell }\abs{\widehat{\rho_Q}(t,\ell )}\br{\abs{\eta-\ell t}+t\abs{k-\ell }}\abs{D_\eta^\alpha\widehat{W[P]}(t,k-\ell ,\eta-\ell t)}\,d\ell dkd\eta\\
&=\mathcal{R}_{1;V}^{\mathrm{(B3)}}+\mathcal{R}_{1;Z}^{\mathrm{(B3)}}+\mathcal{R}_{2;V}^{\mathrm{(B3)}}+\mathcal{R}_{2;Z}^{\mathrm{(B3)}}, 
\end{align*}where the subscript $V$ refers to terms involving $\abs{\eta-\ell t}$, and $Z$ to terms involving $t\abs{k-\ell}$. Among these, $\mathcal{R}_{1;V}^{\mathrm{(B3)}}$ is the most delicate, as it lacks the factor of $\abs{k-\ell}$ in the integrand, making it difficult to control directly via the \eqref{B3} norm. To obtain decay, apply the bootstrap estimate \eqref{B2} to $\widehat{\rho_Q}(t,\ell)$, which yields
\begin{align*}
    \abs{\ell }^{1+\delta}\jb{\ell ,\ell t}^{\sigma_3}=\abs{\ell }^{\half}\jb{\ell ,\ell t}^{\sigma_4}\frac{\abs{\ell }^{1/2+\delta}}{\jb{\ell ,\ell t}^{\sigma_4-\sigma_3}}\leq\abs{\ell }^{\half}\jb{\ell ,\ell t}^{\sigma_4}\frac{\abs{\ell }^{1/2+\delta}}{\abs{\ell ,\ell t}^{1/2+\delta}}=\frac{\abs{\ell }^{1/2}\jb{\ell ,\ell t}^{\sigma_4}}{\jb{t}^{1/2+\delta}},
\end{align*}provided that $\sigma_4-\sigma_3\geq 1/2+\delta$. Next, apply Lemma~\ref{Lemma2.9}\ref{Lemma2.9b} with the $L^1$ norm taken in $(k,\eta)\mapsto\eta D^\alpha_\eta \widehat{W}[P](t,k,\eta)$, noting that the weight $\abs{k}^{-2\delta}\jb{k}^{-2\sigma_3}$ is integrable when $d\geq 3$. This yields
\begin{align*}
    \mathcal{R}_{1;V}^{\mathrm{(B3)}}&\lesssim\frac{1}{\jb{t}^{1/2+\delta}}\norm{\abs{\nabla_z}^\delta\jb{\nabla_z,\nabla_v}^{\sigma_3}(v^\alpha W[P])}_{L^2}\norm{\ak^{1/2}\jb{k,kt}^{\sigma_4}\widehat{\rho_Q}(t,k)}_{L^2_k}\int_k\norm{\jb{\eta}D^\alpha_\eta \widehat{W}[P](t,k,\eta)}_{L^2_\eta}\,dk\\
    &\lesssim\frac{1}{\jb{t}^{1/2+\delta}}\norm{\abs{\nabla_z}^\delta\jb{\nabla_z,\nabla_v}^{\sigma_3}(v^\alpha W[P])}_{L^2}^2\norm{\ak^{1/2}\jb{k,kt}^{\sigma_4}\widehat{\rho_Q}(t,k)}_{L^2_k}.
\end{align*}

Each of the remaining terms $\mathcal{R}_{1;Z}^{\mathrm{(B3)}}$, $\mathcal{R}_{2;V}^{\mathrm{(B3)}}$, $\mathcal{R}_{2;Z}^{\mathrm{(B3)}}$ contains a factor of $\abs{k-\ell }$ in its integrand, allowing the use of Lemma~\ref{Lemma2.9}~\ref{Lemma2.9a} with the $L^1$ term applied to the density, incorporating the appropriate weights. For $\mathcal{R}_{1;Z}^{\mathrm{(B3)}}$, observe that $\abs{k-\ell }\leq\abs{k-\ell }^{\delta}\jb{k-\ell ,\eta-\ell t}^{\sigma_3}$, so
\begin{align*}
    \mathcal{R}_{1;Z}^{\mathrm{(B3)}}&\lesssim t\norm{\abs{\nabla_z}^\delta\jb{\nabla_z,\nabla_v}^{\sigma_3}(v^\alpha W[P])}_{L^2}^2\int_\ell \abs{\ell }^{1+\delta}\abs{\widehat w(\ell )}\jb{\ell ,\ell t}^{\sigma_3}\abs{\widehat{\rho_Q}(t,\ell )}\,d\ell \\
    &\lesssim t\norm{\abs{\nabla_z}^\delta\jb{\nabla_z,\nabla_v}^{\sigma_3}(v^\alpha W[P])}_{L^2}^2\norm{\ak^{1/2}\jb{k,kt}^{\sigma_4}\widehat{\rho_Q}(t,k)}_{L^2_k}\br{\int_\ell \frac{\abs{\ell }^{1+2\delta}}{\jb{\ell ,\ell t}^{2\sigma_4-2\sigma_3}}\,d\ell }^{1/2}\\
    &\lesssim\frac{1}{\jb{t}^{{d/2}-1/2+\delta}}\norm{\abs{\nabla_z}^\delta\jb{\nabla_z,\nabla_v}^{\sigma_3}(v^\alpha W[P])}_{L^2}^2\norm{\ak^{1/2}\jb{k,kt}^{\sigma_4}\widehat{\rho_Q}(t,k)}_{L^2_k}
\end{align*}provided that $\sigma_4-\sigma_3>d/2+1/2+\delta$.
The remaining terms $\mathcal{R}_{2;V}^{\mathrm{(B3)}}$, $\mathcal{R}_{2;Z}^{\mathrm{(B3)}}$ can be estimated in the same fashion, leading to\begin{align*}
    \mathcal{R}_{2;V}^{\mathrm{(B3)}}+\mathcal{R}_{2;Z}^{\mathrm{(B3)}}\lesssim\frac{1}{\jb{t}}\norm{\abs{\nabla_z}^\delta\jb{\nabla_z,\nabla_v}^{\sigma_3}(v^\alpha W[P])}_{L^2}^2\norm{\ak^{1/2}\jb{k,kt}^{\sigma_4}\widehat{\rho_Q}(t,k)}_{L^2_k}.
\end{align*}

Next, the transport terms $\mathcal{T}_{0,1}^{\mathrm{(B3)}}$ and $\mathcal{T}_{0,2}^{\mathrm{(B3)}}$ are addressed. For $\mathcal{T}_{0,1}^{\mathrm{(B3)}}$, the additional factor of $\abs{\eta-kt}$ necessitates the use of the bootstrap estimate \eqref{B1}, which grows like $\jb{t}^{d-1/2}$. Since no time growth is allowed in \eqref{I3}, decay must instead be extracted from $\abs{\widehat{\rho_Q}(t,\ell )}\lesssim\sqrt{K_5}\ep\jb{\ell ,\ell t}^{-\sigma_1}$ to control this term. Using the bound $\abs{\eta-kt}\lesssim\jb{t(k-\ell ),\eta-\ell t}$, the term can be estimated as follows:
\begin{align*}
\mathcal{T}_{0,1}^{\mathrm{(B3)}}&\lesssim\int_{k,\eta,\ell }\abs{k}^\delta\jb{k,\eta}^{\sigma_3}\abs{D_\eta^\alpha \widehat{W[P]}(t,k,\eta)}\abs{\ell }^{1+\delta}\abs{\widehat{\rho_Q}(t,\ell )}\\
&\hspace{2.8cm}\times\jb{t(k-\ell ),\eta-\ell t}\jb{k-\ell ,\eta-\ell t}^{\sigma_3}\abs{D_\eta^\alpha\widehat{W[P]}(t,k-\ell ,\eta-\ell t)}\,dldkd\eta\\
&\lesssim\norm{\abs{\nabla_z}^\delta\jb{\nabla_z,\nabla_v}^{\sigma_3}(v^\alpha W[P])}_{L^2}\norm{\jb{t\nabla_z,\nabla_v}W[P]}_{H^{\sigma_4}_M}\int_\ell \abs{\ell }^{1+\delta}\abs{\widehat{\rho_Q}(t,\ell )}\,d\ell \\
&\lesssim\frac{\sqrt{K_5}\ep}{\jb{t}^{d+1+\delta}}\norm{\abs{\nabla_z}^\delta\jb{\nabla_z,\nabla_v}^{\sigma_3}(v^\alpha W[P])}_{L^2}\norm{\jb{t\nabla_z,\nabla_v}W[P]}_{H^{\sigma_4}_M}.
\end{align*}
For $\mathcal{T}_{0,2}^{\mathrm{(B3)}}$, the additional flexibility allows for the expression $\abs{\eta-kt}\lesssim\jb{t}\jb{k-\ell ,\eta-\ell t}$ to be used, leading to control via the \eqref{B3} norm:
\begin{align*}
    \mathcal{T}_{0,2}^{\mathrm{(B3)}}&\lesssim\jb{t}\int_{k,\eta,\ell }\abs{k}^\delta\jb{k,\eta}^{\sigma_3}\abs{D_\eta^\alpha \widehat{W[P]}(t,k,\eta)}\abs{\ell }\abs{\ell ,\ell t}\abs{\widehat{\rho_Q}(t,\ell )}\\
    &\hspace{3.3cm}\times\jb{k-\ell ,\eta-\ell t}^{\sigma_3}\abs{k-\ell }^{\delta}\abs{D_\eta^\alpha\widehat{W[P]}(t,k-\ell ,\eta-\ell t)}\,d\ell dkd\eta\\
    &\lesssim\jb{t}\norm{\abs{\nabla_z}^\delta\jb{\nabla_z,\nabla_v}^{\sigma_3}(v^\alpha W[P])}_{L^2}^2\int_\ell \abs{\ell }\abs{\ell ,\ell t}\abs{\widehat{\rho_Q}(t,\ell )}\,d\ell \\
    &\lesssim\frac{\sqrt{K_5}\ep}{\jb{t}^d}\norm{\abs{\nabla_z}^\delta\jb{\nabla_z,\nabla_v}^{\sigma_3}(v^\alpha W[P])}_{L^2}^2.
\end{align*}This estimate is valid provided that $\sigma_1>d+2$.

For the remainder term $\mathcal{N}_M^{\mathrm{(B3)}}$, the triangle inequality yields $\abs{k}^\delta\leq \abs{\ell }^\delta+\abs{k-\ell }^{\delta}$ and $\jb{k,\eta}^{\sigma_3}\lesssim\jb{\ell ,\ell t}^{\sigma_3}+\jb{k-\ell ,\eta-\ell t}^{\sigma_3}$. This gives the bound
\begin{align*}
    \ak^\delta\jb{k,\eta}^{\sigma_3}\lesssim(\abs{\ell }^{\delta}+\abs{k-\ell }^{\delta})\jb{\ell,\ell t}^{\sigma_3}+\abs{\ell }^\delta\jb{k-\ell ,\eta-\ell t}^{\sigma_3}+\abs{k-\ell }^{\delta}\jb{k-\ell ,\eta-\ell t}^{\sigma_3}.
\end{align*}Consequently, three terms are obtained such that $\abs{\mathcal{N}_M^{\mathrm{(B3)}}}\lesssim \mathcal{R}_M^{\mathrm{(B3)}}+\mathcal{T}_{M,1}^{\mathrm{(B3)}}+\mathcal{T}_{M,2}^{\mathrm{(B3)}}$.
The reaction term is controlled in the same way as the leading-order contribution $\mathcal{R}_0^{\mathrm{(B3)}}$ above:
\begin{align*}
    \mathcal{R}_M^{\mathrm{(B3)}}\lesssim\frac{1}{\jb{t}^{1/2+\delta}}\norm{\abs{\nabla_z}^\delta\jb{\nabla_z,\nabla_v}^{\sigma_3}(v^\alpha W[P])}_{L^2}\norm{\jb{\hbar k}\ak^{1/2}\jb{k,kt}^{\sigma_4}\widehat{\rho_Q}(t,k)}_{L^2_k}\norm{\abs{\nabla_z}^\delta W[P]}_{H^{\sigma_3}_M}.
\end{align*}
The transport term $\mathcal{T}_{M,1}^{\mathrm{(B3)}}$ is treated similarly to $\mathcal{T}_{0,1}^{\mathrm{(B3)}}$, leading to
\begin{align*}
    \mathcal{T}_{M,1}^{\mathrm{(B3)}}\lesssim\frac{\sqrt{K_5}\ep}{\jb{t}^{d+1+\delta}}\norm{\abs{\nabla_z}^\delta\jb{\nabla_z,\nabla_v}^{\sigma_3}(v^\alpha W[P])}_{L^2}\norm{\jb{t\nabla_z,\nabla_v}W[P]}_{H^{\sigma_4}_M}.
\end{align*}Finally, $\mathcal{T}_{M,2}^{\mathrm{(B3)}}$ is handled as in $\mathcal{T}_{0,2}^{\mathrm{(B3)}}$:
\begin{align*}
    \mathcal{T}_{M,2}^{\mathrm{(B3)}}\lesssim\frac{\sqrt{K_5}\ep}{\jb{t}^{d+1}}\norm{\abs{\nabla_z}^\delta\jb{\nabla_z,\nabla_v}^{\sigma_3}(v^\alpha W[P])}_{L^2}\norm{\abs{\nabla_z}^\delta W[P]}_{H^{\sigma_3}_M}.
\end{align*}

\subsubsection*{Conclusion of bootstrap improvement \eqref{I3}}
Combining the estimates derived above, it follows that for each multi-index $\aal\leq M$ and $d\geq 3$,\begin{align*}
    \frac{d}{dt}&\norm{\abs{\nabla_z}^\delta\jb{\nabla_z,\nabla_v}^{\sigma_3}(v^\alpha W[P](t))}_{L^2}\\
    &\hspace{1cm}\lesssim\frac{1}{\jb{t}^{1/2+\delta}}\norm{\jb{\hbar k}\abs{k}^{1/2}\jb{k,kt}^{\sigma_4} \widehat{\rho_Q}(t,k)}_{L^2_k}\br{1+\norm{\abs{\nabla_z}^\delta W[P](t)}_{H^{\sigma_3}_M}}\\
    &\hspace{1.25cm}+\frac{\sqrt{K_5}\ep}{\jb{t}^{d+1+\delta}}\norm{\jb{t\nabla_z,\nabla_v}W[P](t)}_{H^{\sigma_4}_M}+\frac{\sqrt{K_5}\ep}{\jb{t}^d}\norm{\abs{\nabla_z}^\delta W[P](t)}_{H^{\sigma_3}_M}.
\end{align*}
Integrating in time and applying the bootstrap assumptions \eqref{B1}, \eqref{B2}, and \eqref{B3} gives
\begin{align*}
&\hspace{-0.25cm}\norm{\abs{\nabla_z}^\delta\jb{\nabla_z,\nabla_v}^{\sigma_3}(v^\alpha W[P](t))}_{L^2}-\norm{\abs{\nabla_z}^\delta\jb{\nabla_z,\nabla_v}^{\sigma_3}(v^\alpha W[\Qin])}_{L^2}\\
&\lesssim(1+\sqrt{K_3}\ep)\int_0^t\frac{1}{\jb{s}^{{1/2}+\delta}}\norm{\jb{\hbar k}\abs{k}^{1/2}\jb{k,ks}^{\sigma_4} \widehat{\rho_Q}(s,k)}_{L^2_k}\,ds\\
&\hspace{0.25cm}+\sqrt{K_5}\ep\int_0^t\frac{1}{\jb{s}^{3/2+\delta}}\sup_{0\leq s\leq T^*}\br{\frac{\norm{\jb{s\nabla_z,\nabla_v}W[P](s)}_{H^{\sigma_4}_M}}{\jb{s}^{d-1/2}}}\,ds+\sqrt{K_3K_5}\ep^2\int_0^t\frac{1}{\jb{s}^d}\,ds\\
&\lesssim(1+\sqrt{K_3}\ep)\br{\int_0^t\frac{1}{\jb{s}^{1+2\delta}}\,ds}^{1/2}\norm{\jb{\hbar k}\abs{k}^{1/2}\jb{k,kt}^{\sigma_4} \widehat{\rho_Q}(t,k)}_{L^2_{t}([0,T^*],L^2_k)}+\sqrt{K_1K_5}\ep^2+\sqrt{K_3K_5}\ep^2\\
&\lesssim\ep\sbr{\sqrt{K_2}+\br{\sqrt{K_2K_3}+\sqrt{K_1K_5}+\sqrt{K_3K_5}}\ep}.
\end{align*}
Hence, there exists a positive constant $\widetilde K=\widetilde K(d,g,w,\sigma_i,N_0)$, such that \begin{align*}
    \frac{1}{\hbar^d}\sum_{\aal\leq M}\norm{\jb{\bm{\nabla}_x,\bm{\nabla}_\xi}^{\sigma_3}\abs{\bm{\nabla}_x}^\delta \bm{\xi}^\alpha P(t)}_{L^2_{x,y}}^2\leq \widetilde K\ep^2\sbr{\sqrt{K_2}+\br{\sqrt{K_2K_3}+\sqrt{K_1K_5}+\sqrt{K_3K_5}}\ep}^2.
\end{align*}
Choosing $K_3 := \widetilde K (\sqrt{K_2} + 1)^2 / 2$ and $\ep \leq (\sqrt{K_2 K_3} + \sqrt{K_1 K_5} + \sqrt{K_3 K_5})^{-1}$ yields the desired bootstrap improvement \eqref{I3}.

\subsection{Bootstrap improvement \eqref{I5}}\label{sectionb5}
This section establishes the improvement of the bootstrap assumption \eqref{B5}. By the definition of the Wigner transform, it suffices to show \begin{align*}
    \jb{k,\eta}^{\sigma_1}\abs{\widehat{W[P]}(k,\eta)}\leq\sqrt{2}K_5\ep,
\end{align*}
for all $t\in[0,T^*]$, provided $\ep$ is sufficiently small.

Using the decomposition \eqref{WL}, the triangle inequality yields
\begin{align*}
    &\jb{k,\eta}^{\sigma_1}\abs{\widehat{W[P]}(k,\eta)}\\
    &\leq\jb{k,\eta}^{\sigma_1}\abs{\widehat{W[\Qin]}(k,\eta)}+\jb{k,\eta}^{\sigma_1}\int_0^t\abs{\widehat{W[{\mathrm{L}_P}]}(s,k,\eta)}\,ds+\jb{k,\eta}^{\sigma_1}\int_0^t\abs{\widehat{W[{\mathrm{N}_P}]}(s,k,\eta)}\,ds\\
    &=:\mathcal{I}^{\mathrm{(B5)}}(k,\eta)+\mathcal{L}^{\mathrm{(B5)}}(k,\eta)+\mathcal{N}^{\mathrm{(B5)}}(k,\eta).
\end{align*} 

The initial term satisfies
\begin{align*}
    \mathcal{I}^{\mathrm{(B5)}}(k,\eta) = \jb{k,\eta}^{\sigma_1}\abs{\widehat{W[\Qin]}(k,\eta)} = \abs{\reallywidehat{W[\jb{\bm{\nabla}_x,\bm{\nabla}_\xi}^{\sigma_1}\Qin]}(k,\eta)} \lesssim \ep,
\end{align*}
by Lemma~\ref{FTofWignertransform} and the assumption on $\Qin$ in Theorem~\ref{maintheorem}.

To estimate the linear term, apply  $\jb{k,\eta}^{\sigma_1} \lesssim \jb{k,ks}^{\sigma_1} \jb{\eta-ks}^{\sigma_1}$, the bound $\abs{\widehat{w}(k)} \lesssim 1$, the bootstrap estimate \eqref{B4}, and Lemma~\ref{L2trace}:
\begin{align*}
    \mathcal{L}^{\mathrm{(B5)}}(k,\eta)&=\jb{k,\eta}^{\sigma_1}\abs{\int_0^t\frac{-i\hbar^{-1}}{(2\pi)^d}\widehat w(k)\widehat\rho(s,k)\sin\br{\mytextfrac{\hbar}{2}k\cdot(\eta-ks)}\widehat g(\eta-ks)\,ds}\\
    &\lesssim\int_0^t\jb{k,ks}^{\sigma_1}\abs{\widehat w(k)}\abs{\widehat{\rho_Q}(s,k)}\ak\jb{\eta-ks}^{\sigma_1}\abs{\eta-ks}\abs{\widehat g(\eta-ks)}\,ds\\
    &\lesssim\norm{\ak^{1/2}\jb{k,kt}^{\sigma_2}\widehat{\rho_Q}(t,k)}_{L^2_t([0,T^*])}\br{\int_0^t\ak\jb{\eta-ks}^{2\sigma_1+2}\abs{\widehat g(\eta-ks)}^2\,ds}^{1/2}\\
    &\lesssim\sqrt{K_4}\ep\br{\int_0^\infty\jb{\eta-\mytextfrac{k}{\ak}s}^{2\sigma_1+2}\abs{\widehat g(\eta-\mytextfrac{k}{\ak}s)}^2\,ds}^{1/2}\\
    &\lesssim\sqrt{K_4}\ep\norm{g}_{H_{M}^{\sigma_1+1}}.
\end{align*}

For the nonlinear term, the inequality $\jb{k,\eta}^{\sigma_1} \lesssim \jb{\ell,\ell s}^{\sigma_1} + \jb{k-\ell,\eta-\ell s}^{\sigma_1}$ leads to the decomposition $\mathcal{N}^{\mathrm{(B5)}}(k,\eta)\lesssim \mathcal{R}^{\mathrm{(B5)}}(k,\eta)+ \mathcal{T}^{\mathrm{(B5)}}(k,\eta)$, with
\begin{align*}
    \mathcal{R}^{\mathrm{(B5)}}(k,\eta)&\lesssim\int_0^t\int_\ell \jb{\ell,\ell s}^{\sigma_1}\abs{\widehat{w}(\ell )}\abs{\widehat{\rho_Q}(s,\ell )}\abs{\ell}\abs{\eta-ks}\abs{\widehat{W[P]}(s,k-\ell ,\eta-\ell s)}\,d\ell\,ds,\\
    \mathcal{T}^{\mathrm{(B5)}}(k,\eta)&\lesssim\int_0^t\int_\ell \abs{\widehat{w}(\ell )}\abs{\widehat{\rho_Q}(s,\ell )}\abs{\ell}\abs{\eta-ks}\jb{k-\ell,\eta-\ell s}^{\sigma_1}\abs{\widehat{W[P]}(s,k-\ell ,\eta-\ell s)}\,d\ell\,ds.
\end{align*}

For the reaction term, use $\abs{\eta-ks} \lesssim \jb{s} \jb{k-\ell,\eta-\ell s}$ and Cauchy–Schwarz in time:
\begin{align*}
    \mathcal{R}^{\mathrm{(B5)}}(k,\eta)&\lesssim\norm{\abs{\ell}^{1/2}\jb{\ell,\ell t}^{\sigma_2}\widehat{\rho_Q}(t,\ell)}_{L^2_t([0,T^*])}\\
    &\hspace{0.3cm}\times\int_\ell\br{\int_0^t\jb{s}^2 \jb{\ell,\ell s}^{-2(\sigma_2-\sigma_1)}\abs{\ell}\jb{k-\ell,\eta-\ell s}^2\abs{\widehat{W[P]}(s,k-\ell ,\eta-\ell s)}^2\,d\ell\,ds}^{1/2} d\ell,\\
    &\lesssim\sqrt{K_4K_5}\ep^2\int_\ell\br{\int_0^t\jb{s}^2\frac{\abs{\ell}}{\jb{\ell,\ell s}^{2(\sigma_2-\sigma_1)}\jb{k-\ell,\eta-\ell s}^{2\sigma_1-2}}\,ds}^{1/2} d\ell.
\end{align*}
Letting $j_1:=2(\sigma_2-\sigma_1)$ and $j_2:=2\sigma_1-2$, and noting $\jb{\ell,\ell s}^{2(\sigma_2-\sigma_1)}\geq \jb{\ell s}^{2j_1}$ and $\jb{k-\ell,\eta-\ell s}^{2\sigma_1-2}\geq \jb{k-\ell}^{2j_2}$, the bound becomes
\begin{align*}\mathcal{R}^{\mathrm{(B5)}}(k,\eta)&\lesssim\sqrt{K_4K_5}\ep^2\int_\ell\frac{1}{\jb{k-\ell}^{j_2}}\br{\int_0^t\abs{\ell}\frac{\jb{s}^2}{\jb{\ell s}^{2j_1}}\,ds}^{1/2} d\ell\\
    &\lesssim\sqrt{K_4K_5}\ep^2\int_\ell\frac{1}{\jb{k-\ell}^{j_2}}\br{\int_0^t\frac{\jb{\frac{s}{\abs{\ell}}}^2}{\jb{s}^{j_1}}\,ds}^{1/2} d\ell\\
&\lesssim\sqrt{K_4K_5}\ep^2\int_\ell\frac{\max\set{1,\abs{\ell}^{-1}}}{\jb{\ell}^{j_2}} d\ell,
\end{align*}which converges for $j_1>3$ (i.e.\ $\sigma_2-\sigma_1>3/2$) and $j_2>d$ (i.e.\ $\sigma_1>d/2+1$).

For the transport term, again use $\abs{\eta-ks} \lesssim \jb{s} \jb{k-\ell,\eta-\ell s}$ and apply Cauchy--Schwarz in $\ell$:
\begin{align*}
    \mathcal{T}^{\mathrm{(B5)}}(k,\eta)&\lesssim\int_0^t\jb{s}\int_\ell \abs{\widehat{\rho_Q}(s,\ell )}\abs{\ell}\jb{k-\ell,\eta-\ell s}^{\sigma_1+1}\abs{\widehat{W[P]}(s,k-\ell ,\eta-\ell s)}\,d\ell\,ds\\
    &\lesssim\sqrt{K_5}\ep\int_0^t\jb{s}\br{\int_\ell\frac{\abs{\ell}^2}{\jb{\ell,\ell s}^{2\sigma_1}\abs{k-\ell}^{2\delta}}\,d\ell}^{1/2}\\
    &\hspace{3cm}\times\br{\int_\ell \abs{k-l}^{2\delta}\jb{k-\ell,\eta-\ell s}^{2\sigma_1+2}\abs{\widehat{W[P]}(s,k-\ell ,\eta-\ell s)}^2\,d\ell}^{1/2} ds.
\end{align*}
Changing variables $\ell\mapsto\ell/s$ gives
\begin{align*}
   \sup_{k}\int_\ell\frac{\abs{\ell}^2}{\jb{\ell,\ell s}^{2\sigma_1}\abs{k-\ell}^{2\delta}}\,d\ell=\int_\ell\frac{\abs{\ell}^{2-2\delta}}{\jb{\ell,\ell s}^{2\sigma_1}}\,d\ell\lesssim\frac{1}{\jb{s}^{d+2-2\delta}}\int_\ell\frac{\abs{\ell}^{2-2\delta}}{\jb{\ell }^{2\sigma_1}}\,d\ell\lesssim\frac{1}{\jb{s}^{d+2-2\delta}},
\end{align*} which converges provided that $\sigma_1>d/2+1-\delta$. For the second $\ell$ integral, the Sobolev and bootstrap assumption \eqref{B3} yield
\begin{align*}
  \norm{\abs{k-\ell}^{\delta}\jb{k-\ell,\eta-\ell s}^{\sigma_1+1}\widehat{W[P]}(s,k-\ell,\eta-\ell s)}_{L^\infty_\eta L^2_\ell}&=\norm{\abs{k}^{\delta}\jb{k,\eta}^{\sigma_3}\widehat{W[P]}(s,k,\eta)}_{L^\infty_\eta L^2_k}\\
    &\lesssim\sum_{\aal\leq M}\norm{D^\alpha_\eta\br{\abs{k}^{\delta}\jb{k,\eta}^{\sigma_3}\widehat{W[P]}(s,k,\eta)}}_{L^2_{k,\eta}}\\
    &\lesssim\sqrt{K_3}\ep.
\end{align*}
Therefore,
\begin{align*}
    \mathcal{T}^{\mathrm{(B5)}}(k,\eta)\lesssim\sqrt{K_3K_5}\ep^2\int_0^t\frac{1}{\jb{s}^{{d/2}-\delta}}\,ds\lesssim\sqrt{K_3K_5}\ep^2.
\end{align*}

\subsubsection*{Conclusion of bootstrap improvement \eqref{I5}}
Under the bootstrap assumptions \eqref{B3}, \eqref{B4}, and \eqref{B5}, there exists a constant $\widetilde K=\widetilde K(d,g,w,\sigma_i,N_0)$, such that 
\begin{align*}
      \jb{k,\eta}^{\sigma_1}\abs{\int_p e^{-i\hbar \eta\cdot p }\widehat{P}\br{\mytextfrac{k}{2}+p,\mytextfrac{k}{2}-p}\,dp}\leq \widetilde K\br{\ep+\sqrt{K_4}\ep+\br{\sqrt{K_4K_5}+\sqrt{K_3K_5}}\ep^2}.
\end{align*}
Choose $\ep$ sufficiently small such that $\br{\sqrt{K_4K_5}+\sqrt{K_3K_5}}\ep\leq 1$. Then\begin{align*}
    \jb{k,\eta}^{\sigma_1}\abs{\int_p e^{-i\hbar \eta\cdot p }\widehat{P}\br{\mytextfrac{k}{2}+p,\mytextfrac{k}{2}-p}\,dp}\leq \widetilde K\br{2+\sqrt{K_4}}\ep.
\end{align*}
Setting
\begin{align*}
    \sqrt{K_5} := \frac{\widetilde{K} (2 + \sqrt{K_4})}{\sqrt{2}},
\end{align*}
establishes the bootstrap improvement \eqref{I5}:
\begin{align*}
    \jb{k,\eta}^{\sigma_1} \abs{\int_p e^{-i\hbar \eta\cdot p} \widehat{P}\br{\mytextfrac{k}{2}+p, \mytextfrac{k}{2}-p}\,dp}
    \leq \sqrt{2} \sqrt{K_5} \ep.
\end{align*}
This completes the proof of Proposition \ref{bootstrapprop}.
\subsection*{Acknowledgement}
Thanks are due to Cl\'ement Mouhot for suggesting the topic, as well as for numerous helpful discussions and ongoing encouragement throughout this work. Gratitude is also extended to Chiara Saffirio for careful reading and constructive feedback.
\printbibliography
\end{document}